\newcommand\mylabel[1]{\label{#1}}
\newtheorem{theorem}{Theorem}[section]
\newtheorem*{maintheorem}{Theorem}
\newtheorem{lemma}[theorem]{Lemma}
\newtheorem{proposition}[theorem]{Proposition}
\newtheorem{corollary}[theorem]{Corollary}
\theoremstyle{definition}
\newtheorem{definition}[theorem]{Definition}
\newtheorem{example}[theorem]{Example}
\newtheorem{remark}[theorem]{Remark}
\newtheorem*{acknowledgement}{Acknowledgement}
\theoremstyle{remark}
\DeclareFontFamily{U}{wncy}{}
\DeclareFontShape{U}{wncy}{m}{n}{<->wncyr10}{}
\DeclareSymbolFont{mcy}{U}{wncy}{m}{n}
\DeclareMathSymbol{\Sh}{\mathord}{mcy}{"58}
\newcommand{\ZZ}	{\mathbb{Z}}
\newcommand{\QQ}	{\mathbb{Q}}
\newcommand{\RR}	{\mathbb{R}}
\newcommand{\CC}	{\mathbb{C}}
\newcommand{\PP}	{\mathbb{P}}
\newcommand{\GG}	{\mathbb{G}}
\newcommand{\ideala}    {\mathfrak{a}}
\newcommand{\idealb}    {\mathfrak{b}}
\newcommand{\idealm}    {\mathfrak{m}}
\newcommand{\maxid }    {\mathfrak{m}}
\newcommand  {\shA}     {\mathscr{A}}
\newcommand  {\shE}     {\mathscr{E}}
\newcommand  {\shF}     {\mathscr{F}}
\newcommand  {\shI}     {\mathscr{I}}
\newcommand  {\shJ}     {\mathscr{J}}
\newcommand  {\shM}     {\mathscr{M}}
\newcommand  {\shN}     {\mathscr{N}}
\newcommand  {\shL}     {\mathscr{L}}
\newcommand  {\shY}     {\mathscr{Y}}
\newcommand  {\foX}     {\mathfrak{X}}
\newcommand  {\an}      {{\text{an}}}
\newcommand  {\Ass}     {\operatorname{Ass}}
\newcommand  {\Aut}     {\operatorname{Aut}}
\newcommand  {\can}     {{\text{can}}}
\newcommand  {\Coh}     {\operatorname{Coh}}
\newcommand  {\End}     {\operatorname{End}}
\newcommand  {\Ext}     {\operatorname{Ext}}
\newcommand  {\Fil}     {{\text{\rm {Fil}}}}
\newcommand  {\GL}      {\operatorname{GL}}
\newcommand  {\Hom}     {\operatorname{Hom}}
\newcommand  {\id}      {\operatorname{id}}
\renewcommand  {\ker }  {\operatorname{ker}}
\newcommand  {\kod}     {\operatorname{kod}}
\newcommand  {\invlim}  {\varprojlim}
\newcommand  {\length}  {\operatorname{length}}
\newcommand  {\lra}     {\longrightarrow}
\newcommand  {\Mat}     {\operatorname{Mat}}
\newcommand  {\N}       {\operatorname{N}}
\newcommand  {\naif} 	{\text{{\rm naif}}}
\renewcommand{\O}       {\mathscr{O}}
\newcommand  {\orb}     {\operatorname{orb}}
\newcommand  {\pd}      {\operatorname{pd}}
\newcommand  {\Pic}     {\operatorname{Pic}}
\newcommand  {\pr}      {\operatorname{pr}}
\newcommand  {\Proj}    {\operatorname{Proj}}
\newcommand  {\quadand} {\quad\text{and}\quad}
\newcommand  {\ra}      {\rightarrow}
\newcommand  {\rank}    {\operatorname{rank}}
\newcommand  {\red}     {{\operatorname{red}}}
\newcommand  {\Spec}    {\operatorname{Spec}}
\newcommand  {\Star}    {\operatorname{Star}}
\newcommand  {\Tor}     {\operatorname{Tor}}
\newcommand {\Vect}     {\operatorname{Vec}}
\def\mydate{\number\day\space\ifcase\month \or January\or February\or March\or 
April\or May\or June\or July\or
August\or September\or October\or November\or December\fi \space\number\year}
\DeclareFontFamily{U}{wncy}{}
\DeclareFontShape{U}{wncy}{m}{n}{<->wncyr10}{}
\DeclareSymbolFont{mcy}{U}{wncy}{m}{n}
\DeclareMathSymbol{\Sh}{\mathord}{mcy}{"58}
\begin{document}

\title[Vector bundles on toric 3-folds]
      {Vector bundles on  proper toric 3-folds and certain other   schemes}

\author[Markus Perling]{Markus Perling}
\address{Fakult\"at f\"ur Mathematik, Universit\"at Bielefeld, Postfach 100 131, 33501 Bielefeld, Germany}
\email{perling@math.uni-bielefeld.de}

\author[Stefan Schr\"oer]{Stefan Schr\"oer}
\address{Mathematisches Institut, Heinrich-Heine-Universit\"at, 40204 D\"usseldorf, Germany}
\email{schroeer@math.uni-duesseldorf.de}

\subjclass[2010]{14J60, 14M25}

\dedicatory{Revised version, 14 July 2015}

\begin{abstract}
We show that a proper   algebraic $n$-dimensional scheme $Y$
admits nontrivial vector bundles of rank $n$, even if $Y$ is non-projective, provided that there
is a modification containing a projective Cartier divisor 
that intersects the exceptional locus in only finitely many points.
Moreover, there are such vector bundles with arbitrarily large top Chern number.
Applying this to toric varieties, we infer that every proper toric threefold
admits such vector bundles of rank three. Furthermore, we describe a class of higher-dimensional
toric varieties for which the result applies, in terms of convexity properties
around rays.
\end{abstract}

\maketitle
\tableofcontents

\section*{Introduction}

Let $X$ be an algebraic scheme, that is,
a separated scheme of finite type over a ground field $k$, which is
not necessarily quasiprojective.
A fundamental question is whether or not every coherent sheaf $\shF$ on $X$
is the quotient of some locally free sheaf $\shE$ of finite rank.
If this property holds, one says that $X$ has the \emph{resolution property}.
Totaro \cite{Totaro 2004} gave a characterization of schemes having the resolution property:
They admit some principal $\GL_n$-bundle whose total space is quasiaffine. This should be seen
as a far-reaching generalization of the pointed cone attached to an ample invertible sheaf.

On   schemes $X$ having the resolution property, any coherent sheaf $\shF$ can  be replaced by a complex of locally free
sheaves of finite rank, which has important consequence for K-theory.
If the resolution property is unavailable, one   relies on 
ad hoc approaches, which may become intricate. Here we mention the  definition of Chern classes for coherent sheaves on
arbitrary compact complex manifolds taking values in Deligne cohomology constructed
by Grivaux \cite{Grivaux 2010}.

More generally, the resolution property   makes sense for \emph{algebraic stack}s. 
It is then related to Grothendieck's question on the equality of the Brauer group and the cohomological
Brauer group \cite{Edidin; Hassett; Kresch; Vistoli 1999}. Note that the resolution property does not hold in each and every situation: 
One easily constructs
non-separated schemes without resolution property. A   natural example is the algebraic stack $\shM_0$ of prestable curves
of genus zero, as observed by Kresch \cite{Kresch 2013}.

An even more basic question is whether or not  any proper scheme $X$
admits locally free sheaves $\shE$ of finite rank that are not free, that is $\shE\not\simeq\O_X^{\oplus r}$.
Winkelmann \cite{Winkelmann 1993} showed that this indeed holds for compact complex manifolds.
For proper  schemes, one has  the following   facts:

Any   curve is projective, so there are invertible sheaves $\shL$ with $c_1(\shL)$ arbitrary large.
In contrast, there are normal surfaces $S$ with trivial Picard group, see for example \cite{Schroeer 1999}.
However, any surface admits locally free sheaves $\shE$ of rank $n=2$, in fact
with $c_2(\shE)$ arbitrary large (\cite{Schroeer; Vezzosi 2004}, actually the resolution property holds by results of Gross \cite{Gross 2012}).
Based on these facts, one may arrive at the perhaps over-optimistic conjecture that \emph{any  proper scheme $Y$ should admit 
locally free sheaves $\shE$ of rank $n=\dim(Y)$ with Chern number $c_n(\shE)$ arbitrarily large}.

The main goal of this paper is to provide further bits of evidence for this. Throughout the article, we  assume
that the ground field $k$ is infinite, if not stated otherwise. One of our   results deals with toric varieties of dimension three:

\begin{maintheorem}
Let $Y$ be a proper   toric threefold. Then there are locally free sheaves $\shE$
on $Y$ of rank $n=3$ with arbitrarily large Chern number $c_3(\shE)$.
\end{maintheorem}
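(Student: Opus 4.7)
The plan is to verify, for every proper toric threefold $Y$, the hypothesis of the general existence theorem announced in the abstract and established in the earlier part of the paper: we must exhibit a proper birational morphism $\pi \colon X \to Y$ together with a projective Cartier divisor $D \subset X$ such that $D \cap \mathrm{Exc}(\pi)$ is a finite set. If $Y$ is itself projective, take the trivial modification $X = Y$ with $D$ any ample Cartier divisor, so $\mathrm{Exc}(\pi) = \varnothing$ and the hypothesis is immediate. From now on assume $Y$ is non-projective with fan $\Sigma$ in $N_\RR \cong \RR^3$.

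For the non-projective case, the plan is to take $\pi \colon X = \tilde Y \to Y$ a smooth projective toric refinement; such an $\tilde Y$ exists for every proper toric threefold by first performing toric resolution of singularities via successive star subdivisions of non-smooth cones, and then further refining to achieve projectivity. On $\tilde Y$, the candidate for $D$ is a torus-invariant prime divisor $V(\rho)$ for a ray $\rho \in \Sigma(1) \cap \tilde\Sigma(1)$ whose star is left untouched by the refinement, in the sense that no new ray $\rho' \in \tilde\Sigma(1) \smallsetminus \Sigma(1)$ spans a 2-cone of $\tilde\Sigma$ with $\rho$. Granting this, $D = V(\rho)$ is a proper toric surface, hence projective, and it is Cartier because $\tilde Y$ is smooth. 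Moreover, for each new ray $\rho'$ the intersection $V(\rho) \cap V(\rho')$ is empty, since in a simplicial fan two rays have nonempty torus-invariant intersection only when they span a common 2-cone. Thus $D \cap \mathrm{Exc}(\pi) = \varnothing$, which is certainly finite.

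The main obstacle is the combinatorial claim that such a ray $\rho$ and refinement $\tilde\Sigma$ can always be arranged for a non-projective proper toric threefold. The natural route is first to exhibit a ray $\rho \in \Sigma(1)$ whose star in $\Sigma$ is already a smooth subfan -- so that toric resolution need not subdivide near $\rho$ -- and then to refine $\Sigma$ outside of $\mathrm{Star}(\rho)$ to become smooth and projective without ever introducing a ray that meets $\rho$ in a $2$-cone. The existence of a smooth-star ray is the subtle point: it can fail on projective toric threefolds such as $\PP(a_1,\dots,a_4)$ with no $a_i = 1$, and one must use features specific to non-projectivity, such as the failure of strict convexity of piecewise linear support functions on $\Sigma$, to localize to a ray where the fan is ``smooth enough''. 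The projective refinement outside $\mathrm{Star}(\rho)$ is then to be carried out by iterated star subdivisions in the complement, with a Morelli-type strong factorization argument providing the final adjustment to projectivity if needed.
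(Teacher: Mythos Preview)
Your proposal has a genuine gap at exactly the point you flag as ``the subtle point''. You need, for a non-projective proper toric threefold, a ray $\rho$ whose star is already smooth, together with a smooth projective refinement that inserts no ray adjacent to $\rho$. Neither statement is proved. The vague appeal to ``failure of strict convexity of piecewise linear support functions'' does not produce a smooth-star ray, and there is no evident mechanism linking non-projectivity to local smoothness of $\Sigma$ near some ray; one can certainly cook up non-projective complete fans in which every maximal cone is singular. The second step, refining to projectivity while never touching $\Star(\rho)$, is also left as a hope resting on ``Morelli-type'' factorization, but strong factorization controls birational maps between two given smooth projective toric varieties and says nothing about keeping a prescribed open subfan fixed during the process. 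So as written this is a plan with an unresolved obstruction, not a proof.

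The deeper issue is that you are asking for far more than the main theorem requires. You insist that $X$ be smooth and projective and that $D$ be \emph{disjoint} from $\operatorname{Exc}(\pi)$. None of this is needed: the hypothesis is only that $D\subset X$ be Cartier, that the scheme $D$ be projective, and that $D\cap\operatorname{Exc}(\pi)$ be finite. The paper exploits this directly. In dimension three, \emph{every} ray $\rho\in\Sigma(1)$ has the property that each $3$-cone $\sigma\in\Star(\rho)$ is a ``pyramidal extension'': either $\sigma$ is simplicial, or the subcone $\sigma'$ spanned by $\sigma(1)\smallsetminus\{\rho\}$ is $3$-dimensional with a unique facet $\eta$ beyond which $\rho$ lies, and one splits $\sigma=\sigma'\cup\sigma''$ along $\eta$. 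Doing this for each such $\sigma$ yields a toric modification $f:X\to Y$ whose exceptional locus is a disjoint union of $\PP^1$'s, each meeting the strict transform $D'_\rho$ in a single point; moreover $D'_\rho\to D_\rho$ is an isomorphism, so $D'_\rho$ is a proper toric surface and hence projective, and $D'_\rho$ is $\QQ$-Cartier on $X$ because after the subdivision all rays of $\sigma'\in\Star_X(\rho)$ other than $\rho$ lie in a hyperplane. A suitable multiple $cD'_\rho$ is then Cartier and still projective, and the main theorem applies. No smoothness, no projectivity of $X$, no special choice of $\rho$, and no avoidance of the exceptional locus are needed.
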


In contrast to toric surfaces, smooth proper toric threefolds $Y$ are not necessarily projective.
A characterization of the non-projective ones in terms of triangulations of the 2-sphere was given by Oda 
(\cite{Oda 1978}, Proposition 9.3).
Eikelberg \cite{Eikelberg 1992} gives examples of proper toric threefolds with trivial Picard group, see also
the discussions by Fulton \cite{Fulton 1993}, pp.\ 25--26 and  Ford and Stimets \cite{Ford; Stimets 2002}.

Examples of proper toric threefolds $Y$ whose \emph{toric} vector bundles of rank $\leq 3$ are trivial
were constructed by Payne \cite{Payne 2009}.
In other words, the quotient stack $\shY=[Y/\GG_m^3]$ has
no non-trivial vector bundles of rank $\leq 3$. This result relies on the theory
of branched coverings of cone complexes, together with a computer calculation.
Payne also posed the question whether or not there are nontrivial
vector bundles on $Y$ at all. This question was taken up by Gharib and Karu \cite{Gharib; Karu 2012},
and   our Theorem provides a positive answer to Payne's question.

Note that there has been a strong interest in the $K$-theory of toric varieties in the recent past.
For example,  Anderson and Payne \cite{Anderson; Payne 2013}   showed that for proper toric threefolds
over algebraically closed ground fields, 
the canonical map 
$KH^\circ(X)\ra\text{op}K^\circ(X)$ from the $K$-group of perfect complexes to the
operational $K$-groups of Fulton--MacPherson \cite{Fulton; MacPherson 1981} is surjective.
Gubeladze \cite{Gubeladze 2004} constructed simplicial toric varieties with surprisingly
large $K^\circ(X)$.
We also would like to mention results of Corti\~nas,  Haesemeyer, Walker and Weibel, which express various $K$-groups of toric varieties 
in terms of the cdh-topology (\cite{Cortinas; Haesemeyer; Walker;  Weibel 2009},
\cite{Cortinas; Haesemeyer; Walker;  Weibel 2014}). 

Our theorem above is actually a simple consequence of the following   more general statement, 
which is the main result of this paper:

\begin{maintheorem}
Let $Y$ be a proper scheme.
Suppose there is a proper birational morphism $X\ra Y$ and a
Cartier divisor $D\subset X$ that intersects the exceptional locus  
in a finite set, and that the proper scheme $D$ is projective. 
Then there are locally free sheaves $\shE$ of rank $n=\dim(Y)$ on $Y$ with Chern number $c_n(\shE)$ arbitrarily large.
\end{maintheorem}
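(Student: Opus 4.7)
The plan is to build a rank-$n$ locally free sheaf $\shE$ on the modification $X$ with arbitrarily large top Chern class via a Schwarzenberger-type construction supported on the divisor $D$, and then to descend $\shE$ along $f\colon X\to Y$, exploiting the hypothesis that $D$ meets the exceptional locus $E$ in only finitely many points.

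For the construction on $X$, I would fix an ample line bundle $\shL$ on the projective $(n-1)$-fold $D$, and for each integer $m\ge 1$ choose $n$ global sections $s_1,\dots,s_n$ of $\shL^{\otimes m}$ with empty common zero locus; this is a generic choice for $m\gg 0$ since $n>\dim D$. Writing $\iota\colon D\hookrightarrow X$ for the inclusion, the sections define a surjection $\O_X^{\oplus n}\twoheadrightarrow\iota_*\shL^{\otimes m}$, and because $D$ is a Cartier divisor the target has Tor-dimension one over $\O_X$; consequently
$$\shE\;:=\;\ker\bigl(\O_X^{\oplus n}\longrightarrow\iota_*\shL^{\otimes m}\bigr)$$
is locally free of rank $n$ on $X$. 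A Chern-class computation from the defining short exact sequence (for instance by Grothendieck--Riemann--Roch applied to $\iota$) yields
$$c_n(\shE)\;=\;(-1)^n\deg_D\bigl(c_1(\shL^{\otimes m})^{n-1}\bigr)\;=\;(-1)^n\,m^{n-1}\deg_D\bigl(c_1(\shL)^{n-1}\bigr),$$
the right-hand degree being the top self-intersection of $\shL$ on the projective $(n-1)$-fold $D$, which is positive and grows as $m^{n-1}$, hence is unbounded in $m$.

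The main obstacle is descending $\shE$ from $X$ to $Y$. Over the open complement of the finite set $f(D\cap E)\subset Y$ the morphism $f$ is an isomorphism, and $\shE$ coincides with $\O_X^{\oplus n}$ outside $D$, so the only delicate behavior occurs at the finitely many exceptional images $y\in f(D\cap E)$. For such a $y$ with exceptional fiber $F=f^{-1}(y)$, restricting the defining sequence to $F$ gives $0\to\shE|_F\to\O_F^{\oplus n}\to(\iota_*\shL^{\otimes m})|_F\to 0$, in which the last term is a torsion sheaf whose support $F\cap D\subset D\cap E$ and whose length are bounded independently of $m$. A generic choice of the $s_i$ typically yields $\shE|_F\simeq\maxid_p\oplus\O_F^{\oplus n-1}$ near each $p\in F\cap D$, which is non-trivial on $F$; Grauert's criterion for local freeness of $f_*\shE$ at $y$ therefore does not apply directly. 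I plan to overcome this by one of several local adjustments, performed only in a neighborhood of $D\cap E$: prescribing the jets of the $s_i$ at $D\cap E$ so that $\shE|_F\simeq\O_F^{\oplus n}$ on every exceptional fiber; performing an additional modification of $X$ centered at $D\cap E$ (or along suitably chosen fibers) that absorbs the non-triviality into controlled data; or twisting $\shE$ by a line bundle supported near the exceptional fibers that compensates the failure. Once triviality on each exceptional fiber is arranged, Grauert's theorem delivers $f_*\shE$ locally free of rank $n$ on $Y$ with $f^*f_*\shE\simeq\shE$, and its top Chern class differs from $c_n(\shE)$ only by a correction supported on the finite set $f(D\cap E)$ and bounded independently of $m$, so it is also arbitrarily large.
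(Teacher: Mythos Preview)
Your strategy is the paper's: build $\shE=\ker(\O_X^{\oplus n}\to\iota_*\shL^{\otimes m})$ on $X$ and descend. Two corrections to the framework: first, the descent mechanism the paper uses is not Grauert's criterion (which would require flatness of $f$, not available for a birational morphism) but an equivalence between $\Vect(Y)$ and locally free sheaves on $X$ that are free on a suitable \emph{infinitesimal neighborhood} $E$ of the exceptional locus, established via formal geometry and the Existence Theorem; second, the Chern-class computation for $\iota_*\shL^{\otimes m}$ is not immediate when $\shL$ does not extend to $X$, and the paper handles this by an inductive blow-up argument.

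The substantive gap is in your proposed remedies for the failure of $\shE|_F\simeq\O_F^{\oplus n}$. Your first option---prescribing jets of the $s_i$---cannot succeed: whenever an exceptional fiber $F$ meets $D$, the restricted quotient $(\iota_*\shL^{\otimes m})|_F$ is a nonzero skyscraper, so in the exact sequence $0\to\shE|_F\to\O_F^{\oplus n}\to(\iota_*\shL^{\otimes m})|_F\to 0$ the kernel has strictly smaller Euler characteristic than $\O_F^{\oplus n}$ and is therefore never free, regardless of the choice of sections. The other two options are too vague to carry weight (and a line bundle cannot be ``supported near'' a subset). The paper's resolution is a \emph{two-step} elementary transformation: from $\shE=\O_X^{\oplus n}$ pass first to $\shF=\ker(\shE\to\O_D(s))$, obtaining a canonical surjection $\phi\colon\shF^\vee\to\shN_D(-s)$; then build a second surjection $\psi\colon\shF^\vee\to\shN_D(t-s)$ of the special form $\psi=\phi\otimes g+h$, where $g\in H^0(D,\O_D(t))$ is nonvanishing on $D\cap E$ and $h$ is constructed so that its cokernel is supported precisely on an infinitesimal neighborhood of $D\cap E$. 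Then $h|_E=0$, so $\psi|_E=(\phi\otimes g)|_E$, and multiplication by $g$ identifies $\ker(\psi|_E)$ with $\ker(\phi|_E)=\shE^\vee|_E=\O_E^{\oplus n}$. Thus $\shE'=(\ker\psi)^\vee$ is free on $E$ and descends to $Y$. The key point is that $\shE'^\vee|_E$ and $\O_E^{\oplus n}$ arise as kernels of surjections from the \emph{same} intermediate sheaf $\shF^\vee|_E$ onto isomorphic skyscrapers; a single elementary transformation from $\O_X^{\oplus n}$ cannot arrange this, for the Euler-characteristic reason above.
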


Indeed, this   is a generalization to higher dimensions of a result  of the second author and  Vezzosi \cite{Schroeer; Vezzosi 2004}
on proper surfaces, where the assumptions are vacuous. It would be interesting to find examples
in dimension $n\geq 3$ where all vector bundles of rank $\leq n-1$ are trivial.

This result also has   applications to toric varieties in arbitrary dimension $n\geq 3$:
Indeed, we give characterizations for proper toric $n$-folds $Y$ so that there is a \emph{toric} modification
$f:X\ra Y$ and a \emph{toric} divisor $D\subset X$ satisfying the assumptions of our main result,
in terms of convexity properties around the ray $\rho$ corresponding to the Weil divisor $f(D)\subset Y$
in the fan $\Delta$ that describes the toric variety $Y=Y_\Delta$. Roughly speaking, any cone $\sigma\in \Star(\rho)$
has to be a \emph{pyramidal extension} of the cone $\sigma'$ generated by the other rays $\rho'\neq\rho$ in $\sigma$.
The notion of pyramidal extensions is closely related to the so-called \emph{beneath-and-beyond method}
of convex geometry, and leads to a condition on the ray $\rho\in\Delta$ which we choose to  call \emph{in Egyptian position}.
In dimension $n\leq 3$, any ray is in Egyptian position, but the condition becomes nontrivial in higher dimensions.

\medskip
The paper is organized as follows:
We start in Section \ref{Infinitesimal Neighborhoods} by showing that under certain circumstances,
locally free sheaves   are determined on infinitesimal neighborhoods of exceptional sets.
This is used in Section \ref{Equivalence} to deduce an equivalence of categories
between locally free sheaves on $Y$ and certain proper $Y$-schemes $X$.
Our main theorem appears in Section \ref{Elementary Transformations}, 
in which we construct
infinitely many locally free sheaves $\shE_t$ on certain proper schemes.
To see that these sheaves have unbounded top Chern number, we  
investigate in Section \ref{Chern Classes} Chern classes for   coherent sheaves admitting short global resolutions,
without the usual assumption that any coherent sheaf is the quotient 
of a locally free sheaf.
In Section \ref{Toric Varieties}, we apply our result to toric varieties, and in particular
to toric threefolds. Section \ref{Examples trivial} contains concrete examples
of toric varieties with trivial Picard group for which our results apply.
The final Section \ref{Divisors on threefolds} contains a sufficient condition for  certain proper threefolds
to  contain projective divisors.

\begin{acknowledgement}
The authors wish to thank the referee for a thorough and careful report, which helped 
to  remove some mistakes, to clarify certain arguments, and to improve the overall exposition.
\end{acknowledgement}

\section{Vector bundles on infinitesimal neighborhoods}
\mylabel{Infinitesimal Neighborhoods}

In this section, we study the behavior of locally free sheaves near certain closed fibers.
The set-up is as follows:
Suppose $R$ is a local noetherian ring,   denote by $\idealm=\idealm_R$ its maximal ideal, 
and let $y\in \Spec(R)$ be the corresponding closed point.
Let $f:X\ra\Spec(R)$ be a proper morphism, and $X_y=f^{-1}(y)$ be the closed fiber.
For each coherent sheaf $\shF$ on $X$, we regard the cohomology groups $H^p(X,\shF)$
as $R$-modules, which are finitely generated, though not of finite length in general.
In what follows, we shall consider certain \emph{infinitesimal neighborhoods} of the reduced closed fiber,
that is, closed subschemes $E\subset X$ having the same 
topological space as $f^{-1}(y)\subset X$.

\begin{theorem}
\mylabel{isomorphic  restriction}
Let $\shE$ be a locally free sheaf of finite rank on $X$.
Suppose that the local noetherian ring $R$ is complete, and that 
the $R$-modules $H^1(X,\shE)$ and $H^2(X,\shE)$ have finite length. Then there exists
an infinitesimal neighborhood $ E $ of the reduced closed fiber 
with the following property: For any   locally free sheaf  $\shF$ on $X$
with $\shF|E\simeq \shE|E$, we already have $\shF\simeq \shE$.
\end{theorem}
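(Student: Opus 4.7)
The plan is to reduce to the formal completion via Grothendieck's existence theorem, then lift the given isomorphism through increasingly large infinitesimal neighborhoods, using the finite-length hypothesis to control the obstructions.

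Let $\shI\subset\O_X$ denote the ideal sheaf of the reduced closed fiber and set $E_n=V(\shI^{n+1})$, so that $E_0 = X_y^{\mathrm{red}}$ and $\hat{X}=\dirlim_n E_n$ is the formal completion. Since $R$ is complete and $f$ is proper, Grothendieck's existence theorem (formal GAGA) yields an equivalence between coherent sheaves on $X$ and coherent sheaves on $\hat{X}$. In particular $\shE\iso\shF$ on $X$ if and only if there is a compatible system of isomorphisms $\alpha_n\colon\shE|_{E_n}\to\shF|_{E_n}$ for all $n\geq 0$, and invertibility of each $\alpha_n$ is automatic from invertibility of $\alpha_0$ by Nakayama and local freeness. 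It therefore suffices to find $m$ such that any given $\alpha_m$ on $E=E_m$ extends step-by-step to all larger $n$.

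The obstruction to extending $\alpha_{n-1}$ to $\alpha_n$ arises from the long exact cohomology sequence of
\[
0\to\shHom(\shE,\shF)|_{E_0}\otimes\shI^n/\shI^{n+1}\to\shHom(\shE,\shF)|_{E_n}\to\shHom(\shE,\shF)|_{E_{n-1}}\to 0,
\]
which is exact because $\shHom(\shE,\shF)$ is locally free, and the obstruction class lies in $H^1$ of the leftmost term; equivalently it may be viewed in $H^1(X,\shHom(\shE,\shF)\otimes\shI^n)$. Once all these obstructions vanish for $n>m$, the compatible system $(\alpha_n)_{n\geq m}$ exists and formal GAGA concludes the proof.

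The main obstacle is translating the finite-length hypothesis on $H^1(X,\shE)$ and $H^2(X,\shE)$ into vanishing of these obstruction classes. Grothendieck's theorem on formal functions gives $\widehat{H^i(X,\shE)}\iso\invlim_n H^i(E_n,\shE|_{E_n})$; finite length of $H^i(X,\shE)$ means it is annihilated by some $\maxid^N$ and is already $\maxid$-adically complete. Combining this with the Artin--Rees lemma applied to the filtration $\{\shI^n\shE\}$, one deduces that for $i=1,2$ and $n$ sufficiently large, the map $H^i(X,\shI^n\shE)\to H^i(X,\shE)$ is zero, which yields effective lifting of sections of $\shE$ through infinitesimal thickenings. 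The delicate remaining step is to bootstrap this from $\shE$ to $\shHom(\shE,\shF)$. I expect one exploits that $\alpha_m$ induces an identification $\shHom(\shE,\shF)|_{E_0}\iso\shEnd(\shE|_{E_0})$, combined with an iterated tensor/extension argument relating the obstruction classes to cohomology of sheaves built from $\shE$, so that the $\maxid$-adic decay encoded in the hypothesis suffices to kill them. The resulting $m$ depends only on the Artin--Rees constants and the annihilator exponent of $H^i(X,\shE)$, so it is independent of $\shF$, and taking $E=E_m$ gives the infinitesimal neighborhood claimed.
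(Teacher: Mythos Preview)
Your overall architecture matches the paper's: reduce via formal GAGA to extending an isomorphism through the thickenings $E_n$, with obstructions in $H^1$ of $\shHom(\shE,\shF)|_{E_0}\otimes\shI^n/\shI^{n+1}$, and show these vanish for $n\gg 0$. You also correctly note that the identification $\shHom(\shE,\shF)|_{E_0}\simeq\shEnd(\shE|_{E_0})$ makes the obstruction group independent of $\shF$, which is precisely what yields a uniform $m$. The paper reaches the same point via non-abelian cohomology: the short exact sequence $0\to\underline{\End}(\shE_A)\otimes\shI\to\underline{\Aut}(\shE)\to\underline{\Aut}(\shE|_{X'})\to 1$ together with the interpretation of $H^1(X,\underline{\Aut}(\shE))$ as the set of isomorphism classes of rank-$r$ bundles, so that $\shF$ never enters the coefficient sheaf at all.

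The genuine gap is the vanishing step. From Artin--Rees and finite length you extract only that the map $H^i(X,\shI^n\shE)\to H^i(X,\shE)$ is eventually zero, but what is actually required is $H^1(X,\shG\otimes\shI^n/\shI^{n+1})=0$ for $n\gg 0$ with $\shG=\underline{\End}(\shE)$, and your statement does not give this. The paper's argument runs as follows: take $\shI=\maxid\O_X$ (cofinal with the ideal of the reduced fiber) and apply the Generalized Finiteness Theorem of EGA~III to see that $\bigoplus_k H^p(X,\maxid^k\shG)$ is finitely generated over the Rees ring $\bigoplus_k\maxid^k$; hence $H^p(X,\maxid^{n+i}\shG)=\maxid^i H^p(X,\maxid^n\shG)$ for some fixed $n$ and all $i\geq 0$. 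Finite length of $H^p(X,\shG)$ for $p=1,2$ then forces $H^1(X,\maxid^k\shG)=H^2(X,\maxid^k\shG)=0$ for large $k$, after which the long exact sequence of $0\to\maxid^{k+1}\shG\to\maxid^k\shG\to\maxid^k\shG/\maxid^{k+1}\shG\to 0$ gives the claimed vanishing. Note that this uses the finite-length hypothesis on $H^1$ and $H^2$ of $\underline{\End}(\shE)$ rather than of $\shE$ itself; in the paper's applications $\shE$ is free, so the two coincide, but your proposed ``bootstrap'' from $\shE$ to $\shHom(\shE,\shF)$ cannot succeed in general from the hypothesis as literally stated.
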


\proof
Set $\shI=\idealm\O_X$. Let $E_k\subset X$
be the closed subscheme corresponding to $\shI^{k+1}\subset\O_X$, which are infinitesimal neighborhoods of the
closed fiber $X_y=E_0$. We have inclusions of subschemes $E_0\subset E_1\subset \ldots$,
and the subsheaf $\shI^{k+1}/\shI^{k+2}\subset\O_X/\shI^{k+2}$ is the ideal sheaf for  
$E_{k}\subset E_{k+1}$.
Clearly, $\shI$ annihilates the coherent sheaf $\shI^{k+1}/\shI^{k+2}$, hence
the   schematic support $A_{k+1}\subset X$ of the latter is contained in $E_0$.

Suppose for the moment that we already know that  there is an integer $m\geq 0$ such that the groups
$H^1(X,\underline{\End}(\shE_{A_{k+1}})\otimes_{\O_{A_{k+1}}} \shI^{k+1}/\shI^{k+2}) $, which coincide 
with 
\begin{equation}
\label{first cohomology}
H^1(X,\underline{\End}(\shE)\otimes\shI^{k+1}/\shI^{k+2}) = H^1(X,\shI^{k+1}\underline{\End}(\shE)/\shI^{k+2}\underline{\End}(\shE)), 
\end{equation}
vanish for all $k\geq m$. We now check that $E=E_m$ has  the desired property:
Let   $\shF$ be  a locally free sheaf   on $X$ with  $\shF|E_m\simeq \shE|E_m$.
In light of Corollary \ref{isomorphic increase}, which for the sake of the exposition is deferred to the end of this section, it follows
 by induction  that $\shF|E_k\simeq\shE|E_k$ for all $k\geq m$.

In turn, the isomorphism classes of $\shE$ and $\shF$ have the same image under the canonical map 
$$
H^1(X,\GL_r(\O_X))\lra \invlim_k H^1(E_k,\GL_r(\O_{E_k})).
$$
Let $\foX$ be the formal completion of $X$ along the closed fiber. 
As explained in \cite{Artin 1969}, proof for Theorem 3.5, the canonical map
$$
H^1(\foX,\GL_r(\O_\foX))\lra \invlim_k H^1(E_k,\GL_r(\O_{E_k})).
$$
is bijective. In other words, the sheaves $\shE$ and $\shF$ are formally isomorphic.
Since the local noetherian ring $R$ is complete, we may apply the Existence Theorem
(\cite{EGA IIIa}, Theorem 5.1.4) and conclude that $\shE$ and $\shF$ are isomorphic.

It remains to verify that  the groups (\ref{first cohomology}) indeed vanish for all $k$ sufficiently large.
This is a special case of the next assertion.
\qed

\begin{proposition}
\mylabel{cohomology vanishes}
Let $\shF$ be a coherent sheaf on our scheme $X$ and $p\geq 1 $ an integer 
such that the $R$-modules $H^p(X,\shF)$ and $H^{p+1}(X,\shF)$
have finite length.
Then there is an integer $m\geq 0$ so that $H^p(X,\idealm^k\shF/\idealm^{k+1}\shF)=0$ for all $k\geq m$.
\end{proposition}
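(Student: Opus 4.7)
The plan is to show that both $H^p(X, \idealm^k \shF)$ and $H^{p+1}(X, \idealm^k \shF)$ themselves vanish for $k \gg 0$. Given this, the long exact cohomology sequence of $0 \to \idealm^{k+1}\shF \to \idealm^k\shF \to \shG_k \to 0$, where $\shG_k := \idealm^k\shF/\idealm^{k+1}\shF$, immediately yields the desired vanishing $H^p(X, \shG_k) = 0$.

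The first step is to show that for $q \in \{p, p+1\}$, the natural map $H^q(X, \idealm^k\shF) \to H^q(X, \shF)$ vanishes for $k$ sufficiently large. Its image equals the kernel of $H^q(X, \shF) \to H^q(X, \shF/\idealm^k\shF)$, and these kernels form a descending filtration of the finite-length module $H^q(X, \shF)$. By the theorem of formal functions together with completeness of $R$, the intersection of this filtration equals the kernel of the isomorphism $H^q(X, \shF) \xrightarrow{\sim} \invlim_k H^q(X, \shF/\idealm^k\shF)$, hence is zero; finite length then forces eventual vanishing. Inserted into the long exact sequence of $0 \to \idealm^k\shF \to \shF \to \shF/\idealm^k\shF \to 0$, this yields for $k$ large that $H^q(X, \idealm^k\shF)$ is a quotient of the finite-length module $H^{q-1}(X, \shF/\idealm^k\shF)$, hence itself of finite length, and that $H^{q-1}(X, \shF) \twoheadrightarrow H^{q-1}(X, \shF/\idealm^k\shF)$. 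A diagram chase using naturality of the long exact sequences and surjectivity of the connecting homomorphisms $H^{q-1}(X, \shF/\idealm^k\shF) \twoheadrightarrow H^q(X, \idealm^k\shF)$ then shows that the transition maps $H^p(X, \idealm^{k+1}\shF) \to H^p(X, \idealm^k\shF)$ are surjective for $k \gg 0$. Combined with $\invlim_k H^p(X, \idealm^k\shF) = 0$, obtained by taking inverse limits in the long exact sequence (Mittag--Leffler holding automatically because the $H^{q-1}(X, \shF/\idealm^k\shF)$ involved are finite length) together with formal functions, finite length now forces $H^p(X, \idealm^k\shF) = 0$ for $k \gg 0$.

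The main obstacle will be the analogous vanishing for $H^{p+1}(X, \idealm^k\shF)$. The naive parallel of the above argument breaks down because $H^p(X, \shF) \to H^p(X, \shF/\idealm^k\shF)$ is only injective, not surjective, so the diagram chase does not directly yield surjective transitions at level $p+1$. I expect the resolution to require either applying the already-established $H^p$-vanishing iteratively, e.g.\ to the shifted sheaf $\idealm^{k_0}\shF$ for suitable $k_0$, or leveraging the socle structure of the finite-length modules $H^{p+1}(X, \idealm^k\shF)$ together with the injection $H^p(X, \shG_k) \hookrightarrow H^{p+1}(X, \idealm^{k+1}\shF)$ afforded by the long exact sequence once $H^p(X, \idealm^k\shF) = 0$ has been secured.
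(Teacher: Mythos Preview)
Your proposal is incomplete, as you yourself acknowledge: you do not establish the vanishing of $H^{p+1}(X,\idealm^k\shF)$ for $k\gg 0$, and without it the final long exact sequence argument does not close. Moreover, even your argument for the vanishing of $H^p(X,\idealm^k\shF)$ is shakier than it looks. The claimed surjectivity $H^{q-1}(X,\shF)\twoheadrightarrow H^{q-1}(X,\shF/\idealm^k\shF)$ does not follow from the vanishing of $H^q(X,\idealm^k\shF)\to H^q(X,\shF)$; that vanishing only tells you the connecting map $H^{q-1}(X,\shF/\idealm^k\shF)\to H^q(X,\idealm^k\shF)$ is \emph{surjective}, not zero. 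Your diagram chase for surjectivity of the transition maps $H^p(X,\idealm^{k+1}\shF)\to H^p(X,\idealm^k\shF)$ then relies on control over the transitions $H^{p-1}(X,\shF/\idealm^{k+1}\shF)\to H^{p-1}(X,\shF/\idealm^k\shF)$, whose cokernel sits inside $H^p(X,\shG_k)$ --- precisely the group you are trying to show vanishes. Finally, your appeal to completeness of $R$ is an extra hypothesis the proposition does not carry.

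The missing idea is the \emph{Generalized Finiteness Theorem} (EGA~III, 3.3.2): the graded module $\bigoplus_{k\geq 0} H^p(X,\idealm^k\shF)$ is finitely generated over the Rees ring $S=\bigoplus_{k\geq 0}\idealm^k$. Finite generation forces the existence of an integer $n$ with $H^p(X,\idealm^{n+i}\shF)=\idealm^i\cdot H^p(X,\idealm^n\shF)$ for all $i\geq 0$. Now a single instance of your sandwich argument --- the exact sequence $H^{p-1}(X,\shF/\idealm^n\shF)\to H^p(X,\idealm^n\shF)\to H^p(X,\shF)$, where the outer terms have finite length --- shows $H^p(X,\idealm^n\shF)$ has finite length, hence is killed by some $\idealm^d$, and therefore $H^p(X,\idealm^{n+i}\shF)=0$ for $i\geq d$. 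This argument is completely symmetric in $p$ and $p+1$: the same reasoning with $p$ replaced by $p+1$ gives $H^{p+1}(X,\idealm^k\shF)=0$ for $k\gg 0$, so your ``main obstacle'' evaporates. No formal functions, no Mittag--Leffler, and no completeness of $R$ are needed.
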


\proof
Consider the Rees ring
$S=\bigoplus _{k\geq 0} \maxid^k$ corresponding to the $\maxid$-adic filtration on $R$,
which has invariant subring $S_0=R$ and irrelevant ideal $S_+=\bigoplus_{i\geq 1}\maxid^k$.
The graded $S$-module 
$$
\bigoplus_{k\geq 0} H^p(X,\maxid^k\shF)
$$
is finitely generated, according to the Generalized Finiteness Theorem 
(\cite{EGA IIIa}, Corollary 3.3.2).
In particular, there is an integer $n\geq 1$ such  that
$H^p(X,\maxid^{n+i}\shF)= \maxid^i H^p(X,\maxid^n\shF)$ for all $i\geq 0$.
The short exact sequence
$$
0\lra\maxid^n\shF\lra\shF\lra\shF/\maxid^n\shF\lra 0
$$
yields an exact sequence
$$
H^{p-1}(X,\shF/\maxid^n\shF)\lra H^p(X,\maxid^n\shF)\lra H^p(X,\shF)
$$
of finitely generated $R$-modules. The term on the right has finite length
by assumption, and the term on the left is annihilated by $\maxid^n\subset R$, thus
has finite length as well. So the term in the middle has finite length.
In particular, $H^p(X,\maxid^n\shF)$ is   annihilated by
$\maxid^d$ for some integer $d\geq 0$. Consequently,
$H^p(X,\maxid^{n+i}\shF)=0$ for all $i\geq d$.

The same arguments apply in degree $p+1$ instead of $p$, and we thus have shown
that there is an integer $m\geq 0$ such that
 $H^p(X,\maxid^k\shF)$ and $H^{p+1}(X,\maxid^k\shF)$ vanish for all $k\geq m$.
The short exact sequence 
$$
0\lra\maxid^{k+1}\shF\lra\maxid^k\shF\lra\maxid^k\shF/\maxid^{k+1}\shF\lra 0
$$
of sheaves yields an exact sequence 
$$
H^p(X,\maxid^k\shF)\ra H^p(X,\maxid^k\shF/\maxid^{k+1}\shF)\ra H^{p+1}(X,\maxid^{k+1}\shF) 
$$
of $R$-modules, and it follows  that  $H^p(X,\maxid^k\shF/\maxid^{k+1}\shF)$ vanishes for $k\geq m$.
\qed

\begin{remark}
Theorem \ref{isomorphic  restriction} remains  true if $R$ is the henselization of a ring $A$ with respect to some prime ideal,
provided that $A$ is finitely generated over some field or some excellent Dedekind ring. 
Indeed, by  \cite{Artin 1969}, Theorem 3.5 the restriction map 
$$
H^1(X,\GL_r(\O_X))\ra H^1(\foX,\GL_r(\O_\foX))
$$ 
is injective, which relies on Artin's Approximation Theorem \cite{Artin 1969}, Theorem 1.12. In light of Popescu's generalization 
\cite{Popescu 1985}, Theorem 1.3 (see also the discussions of Swan \cite{Swan 1998} and  Conrad and de Jong \cite{Conrad; de Jong 2002}),
it remains valid under the assumption that  $R$ is any henselian excellent local ring.
\end{remark}

\begin{remark}
\mylabel{no torsion}
We may assume that the structure sheaf $\O_E$ of the infinitesimal neighborhood $f^{-1}(y)_\red\subset E$
contains no nonzero local sections whose support is finite. Indeed, if $\shJ\subset\O_E$
is the ideal of such local sections, then we have $H^1(E,\underline{\End}(\shE)\otimes\shJ)=0$, and Corollary \ref{isomorphic increase} below
tells us that a locally free sheaf $\shF$ that with 
$\shF|{E'}\simeq\shE|{E'}$ already has $\shF|E\simeq\shE|E$.
\end{remark}

\begin{remark}
The proof for Theorem \ref{isomorphic  restriction} reveals that one  may choose the infinitesimal neighborhood as
$E=X_y$ provided
that the groups (\ref{first cohomology}) vanish for all $k\geq 0$. However, it appears difficult
to give a natural interpretation for this condition if the ideal sheaf $\shI=\maxid\O_X$
is not invertible.
\end{remark}

\medskip
In the proof for Theorem \ref{isomorphic   restriction}, we have used Corollary \ref{isomorphic increase} below,
and we now gather the necessary facts from non-abelian cohomology.
Let $X$ be a scheme,  $\shI\subset \O_X$ be a quasicoherent ideal sheaf with $\shI^2=0$,
and   $X'\subset X$ be the corresponding closed subscheme.
Let $\shE$ be a locally free sheaf  of finite rank on $X$, and $\shE'=\shE_{X'}=\shE\otimes_{\O_X}\O_{X'}=\shE/\shI\shE$
its restriction to $X'$. Each homomorphism $f:\shE\ra\shE$ necessarily has $f(\shI\shE)\subset\shI\shE$,
therefore induces a map $f':\shE'\ra\shE'$. We thus obtain a homomorphism
of group-valued sheaves
$$
\underline{\Aut}(\shE)\lra\underline{\Aut}(\shE'),\quad f\longmapsto f'.
$$
Furthermore, each homomorphism $h:\shE\ra\shI\shE$  yields a homomorphism
$$
\shE\lra\shE,\quad s\longmapsto s+h(s).
$$
Using $\shI^2=0$, we see that $s\mapsto s-h(s)$ is an inverse, and  that the resulting mapping
$\underline{\Hom}(\shE,\shI\shE)\ra\underline{\Aut}(\shE)$
is a homomorphism  of group-valued sheaves. We thus obtain a sequence
\begin{equation}
\label{automorphism sheaves}
0\lra \underline{\Hom}(\shE,\shI\shE)\lra\underline{\Aut}(\shE) \lra \underline{\Aut}(\shE')\lra 1
\end{equation}
 of group-valued sheaves. Note that the term on the left is commutative  and written additively,
whereas the other terms  are in general non-commutative,  and written multiplicatively.

\begin{lemma}
\mylabel{exact}
The sequence (\ref{automorphism sheaves}) is exact.
\end{lemma}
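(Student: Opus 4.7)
The plan is to verify the exactness of~(\ref{automorphism sheaves}) locally on $X$, on open sets $U$ where $\shE$ is free of rank $r$. The crucial structural remark is that $\shI^2 = 0$ forces every section of $\shI$ to be nilpotent, so at each point $x \in X$ the stalk $\shI_x$ is contained in the maximal ideal $\maxid_x \subset \O_{X,x}$. In particular, $X$ and $X'$ share the same underlying topological space, so all three sheaves live on the same site and can be compared directly.

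Injectivity of the left-hand arrow is immediate from the definition: $\id_\shE + h = \id_\shE$ forces $h = 0$. Exactness in the middle splits into two verifications. The composition is trivial because $\shI\shE$ is precisely the kernel of the restriction $\shE \to \shE'$, so $(\id + h)'$ acts as the identity on $\shE'$. Conversely, suppose $f$ is a local section of $\underline{\Aut}(\shE)$ with $f' = \id_{\shE'}$. Then $(f - \id)(\shE) \subset \shI\shE$ by the definition of the restriction, so $h := f - \id$ is a local section of $\underline{\Hom}(\shE, \shI\shE)$ with $f = \id + h$, placing $f$ in the image of the left-hand map.

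The heart of the argument is surjectivity of the right-hand arrow. Given a local section $g' \in \underline{\Aut}(\shE')(U)$, trivialize $\shE|_U \simeq \O_U^{\oplus r}$ so that $g'$ becomes an invertible matrix $\bar{M}$ with entries in $\O_{X'}(U)$. The surjection $\O_X \twoheadrightarrow \O_{X'}$ lets me lift the entries to a matrix $M$ with entries in $\O_X(U)$ whose reduction mod $\shI$ is $\bar{M}$. I then claim $M$ is automatically invertible, which is the one place where the hypothesis $\shI^2 = 0$ genuinely intervenes. Checking at a stalk $x \in U$: the image of $\det(M)_x$ in $\O_{X',x}$ equals $\det(\bar{M})_x$, a unit, hence lies outside the maximal ideal $\maxid_x/\shI_x$ of $\O_{X',x}$. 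Since $\shI_x \subset \maxid_x$, the preimage of $\maxid_x/\shI_x$ under $\O_{X,x} \to \O_{X',x}$ is exactly $\maxid_x$, so $\det(M)_x$ lies outside $\maxid_x$ and is a unit. This at every $x \in U$ gives $\det(M) \in \O_X(U)^{\times}$, so $M \in \GL_r(\O_X(U))$, lifting $g'$.

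The main (and only) obstacle is this last step; everything else is formal manipulation of the short exact sequence. Once this local lifting is in hand, the three conditions assemble into the required exactness of sheaves of groups.
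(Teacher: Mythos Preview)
Your proof is correct and follows essentially the same route as the paper's: reduce to the free case, verify the complex conditions formally, and establish surjectivity on the right by lifting matrix entries and checking that the determinant remains a unit because $\shI$ is nilpotent. The only cosmetic difference is that the paper argues directly that a unit modulo a nilpotent ideal is a unit (and notes one may shrink $U$ to perform the lift), whereas you phrase the same fact stalkwise via $\shI_x\subset\maxid_x$; be aware that the lift of entries from $\O_{X'}(U)$ to $\O_X(U)$ may itself require shrinking $U$, though this is harmless for a sheaf-level statement.
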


\proof
This is a local problem, so it suffices to treat the case that $\shE=\O_X^{\oplus r}$ is free.
It  follows immediately from the definition of the maps that the sequence is a complex.
Now let $x\in X$ be a point, $x\in U\subset X$ an open neighborhood, and $A'\in \GL_r(\Gamma(X,\O_{X'}))$.
Shrinking $U$, we may lift the entries of the invertible matrix $A'$ and obtain
a matrix $A\in\Mat_r(\Gamma(X,\O_X))$. Then $\det(A)$ is a unit, because it is a unit modulo
the nilpotent ideal $\Gamma(X,\shI)$. Hence the complex is exact at the term on the right.
An element $A\in\Mat_r(\Gamma(X,\O_X))$ mapping to the identity matrix in $\GL_r(\Gamma(X,\O_{X'}))$
differs from the identity matrix by  some $h\in\Mat_r(\Gamma(X,\shI))$, so the complex
is exact in the middle.
Since hom functors are left exact in the second variable,
the induced map $\underline{\Hom}(\shE,\shI\shE)\ra \underline{\Hom}(\shE,\shE)$ is injective,
therefore the corresponding map to $\underline{\Aut}(\shE)$ is injective as well.
\qed

\medskip
One may simplify the term on the left in the short exact sequence (\ref{automorphism sheaves})
in rather general circumstances.
In what follows, we tacitly suppose that
 that the kernel $\shN$ of the canonical homomorphism $\O_X\ra\underline{\End}(\shI)$ is quasicoherent,
which holds, for example, if $\shI$ is of finite presentation (\cite{EGA I}, Corollary 2.2.2), 
and in particular if $X$ is locally noetherian.
Let $A\subset X$ be the corresponding closed subscheme, which is called the
\emph{schematic support} for the sheaf $\shI$. Note that the $\O_X$-module $\shI$ is actually an $\O_A$-module.
Since $\shI^2=0$, we have $\shN\supset\shI$, hence $A\subset X'$.

\begin{lemma}
\mylabel{identification}
There is a canonical identification 
$$
\underline{\End}_{\O_A}(\shE_A)\otimes_{\O_A}\shI=\underline{\Hom}_{\O_X}(\shE,\shI\shE)
$$
of quasicoherent $\O_X$-modules
\end{lemma}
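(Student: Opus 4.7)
The plan is to string together three standard natural isomorphisms that are available because $\shE$ is locally free of finite rank and $\shI$ is annihilated by $\shN$. First, I would use local-freeness of $\shE$ to write
$$
\underline{\Hom}_{\O_X}(\shE,\shI\shE) \;=\; \shE^{\dual}\otimes_{\O_X}(\shI\shE) \;=\; \shE^{\dual}\otimes_{\O_X}\shE\otimes_{\O_X}\shI \;=\; \underline{\End}_{\O_X}(\shE)\otimes_{\O_X}\shI,
$$
where the identification $\shI\shE=\shI\otimes_{\O_X}\shE$ is legitimate because $\shE$ is flat, so that the canonical map $\shI\otimes_{\O_X}\shE\to\shE$ induced by the inclusion $\shI\subset\O_X$ is injective with image $\shI\shE$. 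These isomorphisms are canonical, as they are induced by the evaluation pairing.

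Next, I would exploit the defining property of $\shN$, namely that multiplication $\O_X\to\underline{\End}_{\O_X}(\shI)$ has kernel $\shN$, so $\shN\cdot\shI=0$. Consequently $\shI$ acquires a canonical $\O_A=\O_X/\shN$-module structure. For any $\O_X$-module $\shF$ one then has the base-change identity
$$
\shF\otimes_{\O_X}\shI \;=\; \shF\otimes_{\O_X}(\O_A\otimes_{\O_A}\shI) \;=\; (\shF\otimes_{\O_X}\O_A)\otimes_{\O_A}\shI \;=\; \shF_A\otimes_{\O_A}\shI.
$$
Applying this with $\shF=\underline{\End}_{\O_X}(\shE)$ converts the tensor product computed over $\O_X$ into one computed over $\O_A$.

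Finally, for locally free $\shE$ of finite rank, the formation of $\underline{\End}$ commutes with base change: the natural map
$$
\underline{\End}_{\O_X}(\shE)\otimes_{\O_X}\O_A \;\lra\; \underline{\End}_{\O_A}(\shE_A)
$$
is an isomorphism, which one sees locally after trivializing $\shE$. Combining these three canonical identifications yields
$$
\underline{\Hom}_{\O_X}(\shE,\shI\shE) \;=\; \underline{\End}_{\O_X}(\shE)\otimes_{\O_X}\shI \;=\; \underline{\End}_{\O_X}(\shE)_A\otimes_{\O_A}\shI \;=\; \underline{\End}_{\O_A}(\shE_A)\otimes_{\O_A}\shI,
$$
as asserted.

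The only delicate point I anticipate is verifying that the $\O_A$-module structure on $\shI$ really is the one for which the base-change identity of the middle paragraph applies; this is a matter of unwinding the definition of $\shN$ as the annihilator, and of checking that the quasicoherence hypothesis on $\shN$ (ensured by the standing assumption that $\shI$ is of finite presentation, or that $X$ is locally noetherian) makes $A$ a genuine closed subscheme so that $\O_A$-modules and $\O_X$-modules annihilated by $\shN$ are the same thing. Once this is in place, the remaining steps are routine manipulations valid in full generality.
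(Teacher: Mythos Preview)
Your proof is correct and uses the same two ingredients as the paper's: local freeness of $\shE$ and the fact that $\shN$ annihilates $\shI$. The paper merely orders the steps differently, first passing from $\underline{\Hom}_{\O_X}(\shE,\shI\shE)$ to $\underline{\Hom}_{\O_A}(\shE_A,\shI\shE)$ via the factorization through $\shE\to\shE/\shN\shE=\shE_A$, and only then converting to tensor products over $\O_A$; your route instead converts to $\underline{\End}_{\O_X}(\shE)\otimes_{\O_X}\shI$ first and base-changes afterward, which is an equally valid rearrangement.
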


\proof
The ideal sheaf
$\shN\subset\O_X$ annihilates $\shI\shE$, hence  the canonical injection 
$$
\underline{\Hom}_{\O_A}(\shE/\shN\shE,\shI\shE) \lra\underline{\Hom}_{\O_X}(\shE,\shI\shE) 
$$
is bijective. 
Using the   identifications $\shE/\shN\shE=\shE_A$ and $\shI\shE=\shI\otimes_{\O_A}\shE_A$
and the fact that $\shE_A$ is locally free on $A$, we obtain an identification
$$
\underline{\Hom} (\shE/\shN\shE,\shI\shE)=
\shE_A^\vee\otimes \shE_A\otimes\underline{\Hom} (\O_A,\shI)=
\underline{\End} (\shE_A)\otimes\shI,
$$
where all tensor products and hom sheaves are over $\O_A$.  
\qed

\medskip
Now recall that $X'\subset X$ is a closed subscheme whose ideal $\shI$ has square zero,
and $A\subset X'$ is the closed subscheme whose ideal is the
kernel of $\O_X\ra\underline{\End}(\shI)$. Under these assumptions, we get the following result by
combining the preceding lemmas:

\begin{proposition}
\mylabel{matrix exact}
There is a short exact sequence
$$
0\lra\underline{\End}_{\O_A}(\shE_A)\otimes_{\O_A}\shI\lra\underline{\Aut}(\shE) \lra \underline{\Aut}(\shE_{X'})\lra 1
$$
of group-valued sheaves.
\end{proposition}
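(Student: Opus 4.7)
The plan is to simply combine the two preceding lemmas, which together already do all of the work. Lemma \ref{exact} supplies the short exact sequence
$$
0\lra \underline{\Hom}_{\O_X}(\shE,\shI\shE)\lra\underline{\Aut}(\shE) \lra \underline{\Aut}(\shE_{X'})\lra 1
$$
of group-valued sheaves on $X$, valid whenever $\shI\subset\O_X$ is a quasicoherent ideal sheaf with $\shI^2=0$, which is exactly our standing hypothesis. This gives the sequence we want, except with the term on the left in its ``raw'' form.

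Next, I would feed in Lemma \ref{identification}: under the hypothesis that the kernel $\shN$ of $\O_X\to\underline{\End}(\shI)$ is quasicoherent (which is assumed above), this lemma provides a canonical isomorphism
$$
\underline{\End}_{\O_A}(\shE_A)\otimes_{\O_A}\shI\;\iso\;\underline{\Hom}_{\O_X}(\shE,\shI\shE)
$$
of quasicoherent $\O_X$-modules. Substituting this identification into the leftmost term of the sequence from Lemma \ref{exact} yields the desired short exact sequence
$$
0\lra\underline{\End}_{\O_A}(\shE_A)\otimes_{\O_A}\shI\lra\underline{\Aut}(\shE) \lra \underline{\Aut}(\shE_{X'})\lra 1.
$$

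There is really no obstacle: the content has been distributed between Lemmas \ref{exact} and \ref{identification}, and the proposition is their formal combination. The only thing worth a sentence is to remark that the additive group structure on $\underline{\End}_{\O_A}(\shE_A)\otimes_{\O_A}\shI$ corresponds under the identification of Lemma \ref{identification} to the group law on $\underline{\Hom}_{\O_X}(\shE,\shI\shE)$ used in the construction of Lemma \ref{exact} (where a homomorphism $h$ acts on $\shE$ via $s\mapsto s+h(s)$), so that the map into $\underline{\Aut}(\shE)$ really is a homomorphism of group-valued sheaves. This is immediate from the definitions.
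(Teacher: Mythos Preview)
Your proof is correct and matches the paper's approach exactly: the proposition is stated as the formal combination of Lemmas \ref{exact} and \ref{identification}, and that is precisely what you do. Your additional remark about the compatibility of the group structures is a reasonable clarification, though the paper does not bother to spell it out.
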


This has the following consequence, which was used in a crucial way for the proof for Theorem \ref{isomorphic  restriction}:

\begin{corollary}
\mylabel{isomorphic increase}
Assumptions as above. If  $H^1(X,\underline{\End}_{\O_A}(\shE_A)\otimes_{\O_A}\shI)=0$,
then a locally free sheaf  $\shF$
on $X$ is isomorphic to $\shE$ if and only if   $\shF_{X'}\simeq\shE_{X'}$.
\end{corollary}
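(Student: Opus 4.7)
The forward direction follows immediately from restricting any isomorphism $\shE \simeq \shF$ to the closed subscheme $X'$. For the converse, my plan is to encode the isomorphism class of $\shF$ as a class in non-abelian first cohomology and extract vanishing from the cohomology exact sequence associated to Proposition \ref{matrix exact}.

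First I would observe that since $\shE$ and $\shF$ are both locally free of the same rank on $X$, they are locally isomorphic, so the sheaf $\underline{\Isom}(\shE,\shF)$ is a right torsor under $\underline{\Aut}(\shE)$. This torsor determines a class
$$[\shF] \in H^1(X,\underline{\Aut}(\shE))$$
whose triviality is equivalent to the existence of a global isomorphism $\shE \simeq \shF$. Next, by functoriality of non-abelian $H^1$, the restriction map sends $[\shF]$ to the class of $\underline{\Isom}(\shE_{X'},\shF_{X'})$ in $H^1(X,\underline{\Aut}(\shE_{X'}))$, and this class is trivial by the hypothesis $\shF_{X'}\simeq\shE_{X'}$.

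Finally I would invoke the exact sequence of pointed sets
$$H^1(X,\underline{\End}_{\O_A}(\shE_A)\otimes_{\O_A}\shI) \lra H^1(X,\underline{\Aut}(\shE)) \lra H^1(X,\underline{\Aut}(\shE_{X'}))$$
attached to the short exact sequence of group-valued sheaves in Proposition \ref{matrix exact}. Exactness at the middle term means that the kernel of the right-hand map coincides with the image of the left-hand map. Since $[\shF]$ is in that kernel and the source on the left vanishes by hypothesis, we conclude $[\shF]=0$, whence $\shF\simeq\shE$.

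The main obstacle is purely technical: one must confirm that the relevant segment of the non-abelian cohomology sequence is exact as pointed sets. This is a standard fact whenever the leftmost term sits as a normal subsheaf of groups inside the middle term, which is automatic here since it arises as the kernel of the surjection furnished by Proposition \ref{matrix exact}; no centrality assumption is needed for exactness at $H^1(X,\underline{\Aut}(\shE))$.
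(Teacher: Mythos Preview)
Your proof is correct and follows essentially the same approach as the paper: both arguments invoke the exact sequence of pointed sets in non-abelian $H^1$ arising from Proposition \ref{matrix exact}, interpret $H^1(X,\underline{\Aut}(\shE))$ as isomorphism classes of locally free sheaves of the given rank, and conclude from the vanishing of the leftmost term. The paper cites Giraud for the non-abelian machinery, while you spell out the torsor interpretation slightly more explicitly; otherwise the arguments are the same.
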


\proof
The short exact sequence of group valued sheaves in the Proposition yields an exact sequence
$$
H^1(X,\underline{\End}_{\O_A}(\shE_A)\otimes_{\O_A}\shI)\lra H^1(X,\underline{\Aut}(\shE))\lra H^1(X,\underline{\Aut}(\shE_{X'}))
$$
of pointed sets, by the machinery of non-abelian cohomology exposed  in \cite{Giraud 1971}, 
Chapter III, \S3.
The term in the middle is the set of isomorphism classes of $\O_X$-modules that
are locally isomorphic to $\shE$, which coincides with the set of isomorphism classes of locally free
sheaves of rank $r=\rank(\shE)$.
Exactness means that the image of the map on the left is the set of isomorphism classes
whose restrictions to $X'$ become isomorphic to $\shE_{X'}$.
By assumption,  the term on the left  
consists of a single element.
\qed

\section{An equivalence of categories}
\mylabel{Equivalence}

Let $X$ be a scheme. We denote
by $\Vect(X)$ the exact category of locally free sheaves of finite rank on $X$.
Given a closed subscheme $E\subset X$, we write 
$\Vect_E(X)\subset\Vect(X)$
for the full subcategory of all locally free sheaves $\shE$ on $X$
whose restriction to $E$ is free, 
that is, $\shE|E \simeq\O_E^{\oplus r}$, with $r=\rank(\shE)$.
Note that if $X$ is not connected, one has to regard the rank as a locally constant
function $x\mapsto\rank_x(\shE)$.
More generally, if $\Phi=\left\{E_\alpha\right\}_{\alpha\in I}$ is a collection of closed subschemes
(``family of supports''),
we denote by 
$$
\Vect_\Phi(X)\subset \Vect(X)
$$ 
the full subcategory of the $\shE$ that become free on each $E_\alpha\in\Phi$.
                      
Now let $Y$ be a noetherian scheme, and $f:X\ra Y$ be a proper morphism
with $\O_Y=f_*(\O_X)$. Suppose that
the coherent sheaves $R^1f_*(\O_X)$ and $R^2f_*(\O_X)$ have finite supports.
Applying Theorem \ref{isomorphic restriction}, we choose for each closed point 
$y\in Y$ with $\dim f^{-1}(y)\geq 1$ an infinitesimal neighborhood $f^{-1}(y)_\red\subset E_y$
so that  locally free sheaves of finite rank on $X\otimes_{\O_{Y}}\O_{Y,y}^\wedge$
that become  free on $E_y$ are already free. Let $\Phi$ be the collection of these $E_y$.
We thus obtain a functor $f^*:\Vect(Y)\ra\Vect_\Phi(X)$.

\begin{theorem}
\mylabel{equivalence}
Let $f:X\ra Y$ be a proper morphism of noetherian schemes with $\O_Y=f_*(\O_X)$  such that $R^1f_*(\O_X)$ and $R^2f_*(\O_X)$ have finite supports,
and let $\Phi$ be the collection of closed subschemes defined above.
Then for every $\shE\in\Vect_\Phi(X)$, the coherent $\O_Y$-module $f_*(\shE)$ is locally free, 
and the functors 
$$
f^*:\Vect(Y)\lra\Vect_\Phi(X)\quadand f_*:\Vect_\Phi(X)\lra\Vect(Y)
$$ 
are quasi-inverse equivalences of categories.
\end{theorem}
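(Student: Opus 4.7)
The plan is to verify that the adjoint pair $(f^*, f_*)$ restricts to inverse equivalences between the stated categories. The argument reduces via faithfully flat base change to the complete local rings $R = \O_{Y,y}^\wedge$ at closed points $y \in Y$, where Theorem \ref{isomorphic restriction} applies directly, and where the infinitesimal neighborhood $E_y$ was chosen.

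For the direction from $Y$ to $X$: if $\shL \in \Vect(Y)$, then $\shL$ is free on a neighborhood of each closed point $y$, so $f^*\shL$ is free on the preimage of this neighborhood and in particular trivial on any infinitesimal thickening of $f^{-1}(y)$; hence $f^*\shL \in \Vect_\Phi(X)$. The projection formula, combined with the hypothesis $f_*\O_X = \O_Y$, yields a natural isomorphism $f_* f^*\shL \simeq \shL \otimes_{\O_Y} f_*\O_X \simeq \shL$ which identifies with (the inverse of) the adjunction unit $\eta_\shL$, so $\eta$ is an isomorphism on $\Vect(Y)$.

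The nontrivial direction concerns $\shE \in \Vect_\Phi(X)$, assumed of rank $r$ near $y$. Working over $R = \O_{Y,y}^\wedge$ and its base change $f' \colon X_R \to \Spec R$, flat base change gives $H^i(X_R, \O_{X_R}) = R^i f_*(\O_X) \otimes_{\O_Y} R$. The finite-support hypothesis makes the right-hand side of finite length for $i = 1, 2$, so Theorem \ref{isomorphic restriction} applies to the free sheaf $\O_{X_R}^{\oplus r}$ on $X_R$. By the very choice of $E_y$, the hypothesis $\shE|_{E_y} \simeq \O_{E_y}^{\oplus r}$ forces a trivialization $\shE|_{X_R} \simeq \O_{X_R}^{\oplus r}$. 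Applying $f'_*$ and using $f'_*\O_{X_R} = R$ (another instance of flat base change), one obtains
\[
(f_*\shE) \otimes_{\O_Y} R \;\simeq\; f'_*(\shE|_{X_R}) \;\simeq\; f'_*(\O_{X_R}^{\oplus r}) \;=\; R^{\oplus r}.
\]
Since completion at $y$ is faithfully flat, the coherent $\O_{Y,y}$-module $(f_*\shE)_y$, having free completion of rank $r$, is itself free of rank $r$. Thus $f_*\shE$ is locally free of rank $r$ near $y$, hence globally.

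For the counit $\varepsilon_\shE \colon f^*f_*\shE \to \shE$, the same base change renders both source and target free of rank $r$ on $X_R$. Naturality of $\varepsilon$ applied to any trivialization $\varphi \colon \shE|_{X_R} \stackrel{\sim}{\to} \O_{X_R}^{\oplus r}$, together with the fact that the counit for the free sheaf $\O_{X_R}^{\oplus r}$ is the identity (since $R = f'_*\O_{X_R}$), shows that $\varepsilon_\shE|_{X_R}$ is an isomorphism. Since this holds for every closed point $y \in Y$, and the vanishing of kernel and cokernel of a map of coherent sheaves on the noetherian scheme $X$ may be checked at every closed point of $X$ (each of which lies over some closed $y \in Y$ by properness of $f$), $\varepsilon_\shE$ is an isomorphism. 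The main obstacle is the trivialization $\shE|_{X_R} \simeq \O_{X_R}^{\oplus r}$, which rests on the full strength of Theorem \ref{isomorphic restriction}; everything else is routine adjunction formalism and flat base change.
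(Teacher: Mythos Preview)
Your proof is correct and follows essentially the same route as the paper: reduce via faithfully flat base change to the completion $\O_{Y,y}^\wedge$ at each closed point, invoke the defining property of $E_y$ (coming from Theorem~\ref{isomorphic restriction}) to trivialize $\shE$ there, and deduce both the local freeness of $f_*\shE$ and the bijectivity of the adjunction maps from $\O_Y = f_*\O_X$. Your treatment of the unit via the projection formula and of the counit via naturality is slightly more explicit than the paper's, but the underlying argument is the same; the only point you leave implicit is the trivial case $\dim f^{-1}(y)=0$, where no $E_y$ is chosen and $f$ is an isomorphism near $y$.
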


\proof
Suppose first that $Y=\Spec(R)$ is the spectrum of a complete local noetherian ring,
with  closed point $y\in Y$. The assertion is trivial if the closed fiber is zero-dimensional,
because then $f:X\ra Y$ is an isomorphism.  Suppose now that $\dim f^{-1}(y)\geq 1$.
Let $\shE$ be a locally free sheaf of finite rank on
$X$ whose restriction to $E_y\subset X$ is free. This implies, by the choice of $E_y$,
that $\shE$ is free. Using the assumption $\O_Y= f_*(\O_X)$, we infer that
$f_*(\shE)$ is free. To see that $f^*f_*$ and $f_*f^*$ are isomorphic to the respective identity functors,
it thus suffices to verify this for the structure sheaves $\O_X$ and $\O_Y$, which
again follows from $\O_Y= f_*(\O_X)$. Thus the assertion holds in this special case.

We now come to the general case. Let $\shE\in\Vect_\Phi(X)$.
To verify that the coherent sheaf $f_*(\shE)$ is locally free, it suffices to check that its stalks
at closed points are free. Fix a closed point $y\in Y$. In order to check that $f_*(\shE)_y$
is free, we may assume that $Y=\Spec(R)$ is the spectrum of a local ring.
By faithfully flat descent (see \cite{SGA 1}, Expose VIII, Corollary 1.11),
it suffices to treat the case that $R$ is complete, which indeed
holds by the preceding paragraph.
Summing up, for each $\shE\in\Vect_\Phi(X)$, the coherent sheaf $f_*(\shE)$ is locally free. 

To see that
the natural adjunction map
$f^*(f_*(\shE))\ra \shE$
is bijective for each $\shE\in\Vect_\Phi(X)$, it again suffices to treat the case that $Y$ is the spectrum of a complete
local noetherian ring. It then follows that $\shE$ is free,
and bijectivity follows from $\O_Y= f_*(\O_X)$. 
Finally, checking the bijectivity of the  natural adjunction map
$\shF\ra f_*(f^*(\shF))$ with $\shF\in\Vect(Y)$   is a local problem, so we may 
assume that $\shF$ is free, and then  conclude with $\O_Y= f_*(\O_X)$.
\qed

\begin{remark}
Suppose that $Y$ is normal and admits a resolution of singularities,
and that $f:X\ra Y$ is proper and birational. The assumption that $R^1f_*(\O_X)$
and $R^2f_*(\O_X)$ have finite support holds in particular if the local schemes
$\Spec(\O_{Y,y})$ have only rational singularities, for all nonclosed
points $y\in Y$.
\end{remark}

\medskip
We are mainly interested in the following situation: Suppose
that $Y$ is a noetherian scheme  and    $f:X\ra Y$ be a proper morphism 
with $\O_Y=f_*(\O_X)$.
Let $E\subset X$ be the \emph{exceptional locus}, that is, the set of points $x\in X$ where $\dim_xf^{-1}(f(x))>0$,
which is closed by Chevalley's Semicontinuity Theorem (\cite{EGA IVc}, Theorem 13.1.3). 
This $E$ can be viewed as  the union of all irreducible curves
mapping to points. Its image $f(E)\subset Y$, which is a closed set, is called the \emph{critical locus}.

The exceptional locus $E\subset X$ can also be regarded as the set of points where
the morphism $f:X\ra Y$ is ramified in the sense of \cite{EGA IVd}, Definition 17.3.1.
Thus there is a canonical scheme structure on $E$,   being the  support of the coherent sheaf $\Omega^1_{X/Y}$.
In the following, however, we shall regard the exceptional locus either as a closed subset,
or  choose an infinitesimal neighborhood that makes the exceptional locus large enough in the following sense:

\begin{corollary}
\mylabel{critical finite}
Let $Y$ be a noetherian scheme, $f:X\ra Y$ a proper morphism with $\O_Y=f_*(\O_X)$,
whose critical locus  is finite. Then there is an infinitesimal
neighborhood $E$ of the exceptional locus with the following property:
For every $\shE\in\Vect_E(X)$, the coherent sheaf $f_*(\shE)$ is locally free, 
and the functors 
$$
f^*:\Vect(Y)\lra\Vect_E(X)\quadand f_*:\Vect_E(X)\lra\Vect(Y)
$$ 
are quasi-inverse equivalences of categories.
\end{corollary}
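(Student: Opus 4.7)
The plan is to reduce Corollary \ref{critical finite} to Theorem \ref{equivalence} by verifying its hypotheses and then repackaging the finite collection of infinitesimal neighborhoods it produces into a single closed subscheme.

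First I would check that $R^1f_*(\O_X)$ and $R^2f_*(\O_X)$ have finite support. Let $C=f(E_{\mathrm{exc}})\subset Y$ be the critical locus, which is finite by assumption, and set $U=Y\minus C$. Over $U$, every fiber of $f$ is zero-dimensional, so $f|_{f^{-1}(U)}$ is proper and quasi-finite, hence finite by Zariski's main theorem, and in particular affine. Therefore $R^if_*(\O_X)|_U=0$ for all $i\ge 1$, so these higher direct images are supported in the finite set $C\subset Y$, as required by Theorem \ref{equivalence}.

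Next I would apply Theorem \ref{equivalence}: the closed points $y\in Y$ with $\dim f^{-1}(y)\ge 1$ are exactly the points of $C$, so the collection $\Phi$ produced by the theorem consists of infinitesimal neighborhoods $E_y$ of the reduced fibers $f^{-1}(y)_\red$ for $y\in C$, yielding quasi-inverse equivalences $f^*\colon\Vect(Y)\leftrightarrows\Vect_\Phi(X)\colon f_*$.

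Finally I would assemble the $E_y$ into a single infinitesimal neighborhood $E$ of the exceptional locus. Since distinct closed points of $Y$ have disjoint fibers in $X$, the closed subsets $f^{-1}(y)$ for $y\in C$ are pairwise disjoint in $X$, and we may shrink each $E_y$ (without losing its defining property) so that the $E_y$ sit inside pairwise disjoint open neighborhoods. Define $E=\bigsqcup_{y\in C} E_y$ as a closed subscheme of $X$; its underlying topological space is the union of the $f^{-1}(y)$, which is precisely the exceptional locus. A locally free sheaf on $X$ is free upon restriction to $E$ if and only if it is free on each component $E_y$, so $\Vect_E(X)=\Vect_\Phi(X)$, and Theorem \ref{equivalence} now delivers the claimed equivalences.

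The only nontrivial step is the first: checking that finiteness of the critical locus forces $R^1f_*(\O_X)$ and $R^2f_*(\O_X)$ to have finite support, which rests on the proper-plus-quasi-finite-implies-finite principle. The remaining steps are organizational, exploiting the disjointness of fibers to fuse the finitely many $E_y$ into a single subscheme.
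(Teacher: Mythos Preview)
Your proof is correct and follows essentially the same approach as the paper's: verify that the higher direct images $R^pf_*(\O_X)$ are supported on the finite critical locus, invoke Theorem \ref{equivalence}, and take $E$ to be the union of the resulting $E_y$. The shrinking step is unnecessary, since by definition each infinitesimal neighborhood $E_y$ has the same underlying topological space as $f^{-1}(y)_\red$, so the $E_y$ are automatically pairwise disjoint.
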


\proof
Let $y_1,\ldots,y_r\in Y$ be the points comprising the critical locus $C\subset Y$.
Clearly, the coherent sheaves $R^pf_*(\O_X)$, $p\geq 1$ are supported by the finite set $C$.
Moreover, the $f^{-1}(y_i)$, $1\leq i\leq r$ are precisely the fibers that are not zero-dimensional,
and their union is the the exceptional locus for $f:X\ra Y$. We then choose infinitesimal neighborhoods
$f^{-1}(y_i)\subset E_{y_i}$ as for the Theorem, and for the union $E=E_{y_1}\cup\ldots\cup E_{y_r}$
the assertion follows.
\qed

\section{Elementary transformations}
\mylabel{Elementary Transformations}

Fix an infinite  ground field $k$,  let $Y$ be a proper scheme, and $f:X\ra Y$ be a proper birational morphism with $\O_Y=f_*(\O_X)$.
Let $E\subset X$ be the exceptional locus.  

\begin{theorem}
\mylabel{infinitely classes}
Assumptions as above. Suppose the following three conditions:
\begin{enumerate}
\item
The critical locus $f(E)\subset Y$ is finite.
\item
There is an effective Cartier divisor $D\subset X$ such that $D\cap E$ is finite.
\item
The proper scheme $D$ is projective.
\end{enumerate}
Then there are infinitely many isomorphism classes of locally free sheaves   on $Y$ of rank $n=\dim(Y)$.
\end{theorem}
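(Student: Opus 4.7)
My plan is to use the equivalence of categories from Corollary~\ref{critical finite} to transfer the problem from $Y$ to $X$, and then to produce infinitely many non-isomorphic locally free sheaves of rank $n$ on $X$ by \emph{elementary transformations} along the projective Cartier divisor $D$. By hypothesis~(i) and Corollary~\ref{critical finite}, there exists an infinitesimal neighborhood $\tilde E \subset X$ of the exceptional locus such that $f^*$ and $f_*$ yield quasi-inverse equivalences between $\Vect(Y)$ and $\Vect_{\tilde E}(X)$. It therefore suffices to exhibit infinitely many pairwise non-isomorphic locally free sheaves of rank $n$ on $X$ that become free upon restriction to $\tilde E$.

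Fix an ample invertible sheaf $\shM$ on the projective scheme $D$, which has dimension $n-1$. For $t \gg 0$ the sheaf $\shM^{\otimes t}$ is globally generated; since $k$ is infinite and $\dim(D) = n-1 < n$, a general choice of $n$ global sections $s_1,\ldots,s_n \in \Gamma(D,\shM^{\otimes t})$ still generates $\shM^{\otimes t}$, by a Bertini-style argument. Composing the induced surjection $\O_D^{\oplus n} \twoheadrightarrow \shM^{\otimes t}$ with the restriction $\O_X^{\oplus n} \twoheadrightarrow \O_D^{\oplus n}$ yields a surjection $\varphi_t \colon \O_X^{\oplus n} \twoheadrightarrow i_* \shM^{\otimes t}$, where $i \colon D \hookrightarrow X$. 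Set
\[
\shE_t \;:=\; \ker(\varphi_t).
\]
At every point of $D$ at least one $s_j$ is a local unit, so after a local change of basis $\varphi_t$ becomes projection onto the first coordinate followed by restriction to $D$; this identifies $\shE_t$ locally with $\O_X(-D) \oplus \O_X^{\oplus(n-1)}$, and in particular $\shE_t$ is locally free of rank $n$.

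The main technical hurdle is to check that $\shE_t$ actually lies in $\Vect_{\tilde E}(X)$, i.e., that $\shE_t|_{\tilde E} \iso \O_{\tilde E}^{\oplus n}$. From the defining sequence, $\shE_t$ agrees with $\O_X^{\oplus n}$ on $X \setminus D$, so it is free on $\tilde E \setminus (D \cap \tilde E)$; by hypothesis~(ii) this omits only a finite set. Using Remark~\ref{no torsion} we may arrange that $\O_{\tilde E}$ has no local sections of finite support; then Proposition~\ref{matrix exact} and Corollary~\ref{isomorphic increase}, applied to the skyscraper ideal $\shJ \subset \O_{\tilde E}$ supported on $D \cap \tilde E$, kill the obstruction $H^1(\tilde E, \underline{\End}(\O_{\tilde E}^{\oplus n}) \otimes \shJ) = 0$, so the local trivializations of $\shE_t|_{\tilde E}$ assemble into a global isomorphism $\shE_t|_{\tilde E} \iso \O_{\tilde E}^{\oplus n}$. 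This step uses hypothesis~(ii) in an essential way and is the principal technical difficulty.

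To conclude, I distinguish the $\shE_t$ by their Euler characteristics: since $X$ is proper, applying $\chi(X,-)$ to the defining sequence yields
\[
\chi(X, \shE_t) \;=\; n\,\chi(X, \O_X) \;-\; \chi(D, \shM^{\otimes t}).
\]
As $\shM$ is ample on the projective scheme $D$ of dimension $n-1$, the Hilbert polynomial $\chi(D, \shM^{\otimes t})$ is a polynomial in $t$ of degree $n-1$ with positive leading coefficient, and hence takes infinitely many values (for $n \geq 2$; the case $n=1$ is covered by the classical fact that proper curves are projective and thus carry infinitely many invertible sheaves). Infinitely many of the $\shE_t$ are therefore pairwise non-isomorphic, and applying the equivalence $f_* \colon \Vect_{\tilde E}(X) \to \Vect(Y)$ produces infinitely many pairwise non-isomorphic locally free sheaves of rank $n$ on $Y$.
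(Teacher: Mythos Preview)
Your overall strategy---reduce to $X$ via Corollary~\ref{critical finite}, construct sheaves by elementary transformations along $D$, and distinguish them by Euler characteristics---is exactly the paper's. However, the key claim that $\shE_t|_{\tilde E}\simeq\O_{\tilde E}^{\oplus n}$ is \emph{false} as stated, and your justification misapplies Corollary~\ref{isomorphic increase}.

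Concretely: restricting your defining sequence to $\tilde E$ (and using Remark~\ref{no torsion} to kill the $\Tor_1$ term) gives
\[
0\lra \shE_t|_{\tilde E}\lra \O_{\tilde E}^{\oplus n}\lra (i_*\shM^{\otimes t})|_{\tilde E}\lra 0,
\]
where the right-hand term is a nonzero skyscraper supported on the finite set $D\cap\tilde E$. Take an exceptional component $C\simeq\PP^1$ meeting $D$ transversally in a single reduced point $p$. The induced map $\O_{\PP^1}^{\oplus n}\ra k(p)$ is evaluation of the generic sections at $p$; its kernel has degree $-1$, hence $\shE_t|_C\simeq\O_{\PP^1}(-1)\oplus\O_{\PP^1}^{\oplus(n-1)}$, which is \emph{not} free. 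Equivalently, your own local description $\shE_t\simeq_{\mathrm{loc}}\O_X(-D)\oplus\O_X^{\oplus(n-1)}$ already shows this: $\O_X(-D)|_C\simeq\O_{\PP^1}(-1)$. So $\shE_t\notin\Vect_{\tilde E}(X)$ and the equivalence does not apply. Your appeal to Corollary~\ref{isomorphic increase} cannot rescue this: that corollary compares sheaves on $X$ with their restrictions to a \emph{closed} subscheme defined by a square-zero ideal, whereas you only have agreement on the \emph{open} complement of $D\cap\tilde E$; and once $\O_{\tilde E}$ has no finite-support torsion, the ``skyscraper ideal $\shJ$'' you invoke is zero.

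The paper resolves this by a two-step construction: first form $\shF=\ker(\O_X^{\oplus n}\ra\O_D(s))$ as you do; then dualize to get a surjection $\phi:\shF^\vee\ra\shN_D(-s)$ with $\ker(\phi)=\shE^\vee$ free; finally replace $\phi$ by a carefully built $\psi:\shF^\vee\ra\shN_D(t-s)$ (using Proposition~\ref{schematic support}) so that, on $\tilde E$, the maps $\psi_{\tilde E}$ and $\phi_{\tilde E}$ differ only by an isomorphism of the target skyscrapers. This forces $\ker(\psi_{\tilde E})\simeq\ker(\phi_{\tilde E})$, which is free. The missing idea in your argument is precisely this: one must engineer the surjection so that its restriction to $\tilde E$ has the \emph{same kernel} as the one coming from the free sheaf, not merely be surjective.
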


\proof
Let us first discuss the case $\dim(D)=0$.  Then   each irreducible component $X'\subset X$
intersecting $D$ is 1-dimensional. Since $X\ra Y$ is birational, $X'\cap E$ is finite. It follows that
the irreducible component $C=f(X')$ of $Y$ is a curve. Since proper curves are projective,
and every invertible sheaf on $C$ can be represented by
a Cartier divisor whose support is disjoint from the closure of $Y\smallsetminus C\subset Y$, one easily
sees that the restriction map $\Pic(Y)\ra\Pic(C)$ is surjective. Hence
the locally free sheaves on $Y$ of the form $\shE=\shL^{\oplus n}$, with $\shL$ invertible and $(\shL\cdot C)=\deg(\shL_C)>0$
yield the assertion.

Suppose now that $\dim(D)\geq 1$.
According to Corollary \ref{critical finite}, we may choose a suitable infinitesimal neighborhood  $E$ of
the exceptional set so
that
the pullback functor $f^*$ induces an equivalence  between the category $\Vect(Y)$ of locally free sheaves
of finite rank on $Y$ and the category $\Vect_E(X)$ of locally free sheaves on $X$ whose
restriction to $E$ are free. This allows us to work entirely on $X$ rather than $Y$.
In light of Remark \ref{no torsion},
we additionally may assume that the structure sheaf $\O_E$ contains no nontrivial local   sections  
supported on the finite set $D\cap E$.

Let  $\shE$ be some locally free sheaf of rank $n=\dim(Y)=\dim(X)$ on $X$ that   becomes free on $E$.
For example, we could take the free sheaf $\shE=\O_X^{\oplus n}$.
The following construction   yields other locally free sheaves $\shE'$ on $X$ that become free on $E$.

To start with, fix an ample invertible sheaf $\O_D(1)$.
For simplicity, we write $\shE_D=\shE|D$ for the induced
locally free sheaf on the effective Cartier divisor $D\subset X$. 
By Proposition \ref{schematic support} below applied to the projective scheme $D$, which has dimension $\leq n-1$,
and the empty closed subscheme $A=\emptyset$, 
there is some integer $s\geq 1$ so that there exists a surjection $\shE_D\ra\O_D(s)$. Note that
here our assumption that the ground field $k$ is infinite enters. Composing with the
canonical projection, we get a  surjection $\shE\ra\O_D(s)$.
The short exact sequence
$$
0\lra\shF\lra\shE\lra\O_D(s)\lra 0
$$
defines a coherent sheaf $\shF$ on $X$, which is locally free because the stalks of $\O_D(s)$ have 
projective dimension $\leq 1$ as modules over the stalks of $\O_X$. Such    $\shF$ are called
\emph{elementary transformations} of $\shE$. One may recover the latter from the former: Dualizing the preceding
short exact sequence yields
$$
0\lra\shE^\vee\lra\shF^\vee\lra\underline{\Ext}^1(\O_D(s),\O_X)\lra 0.
$$
This is exact, because the sheaves $\underline{\Hom}(\O_D(s),\O_X)$ and  $\underline{\Ext}^1(\shE,\O_X)$ vanish.

Now we view
$\underline{\Ext}^p(\O_D(s),\shM)$ and $\underline{\Ext}^p(\O_D,\shM)\otimes\O_D(-s)$
as $\delta$-functors in $\shM$. Obviously, both are exact and  vanish on injective $\O_X$-modules $\shM$,
hence are universal. Moreover, 
we have a canonical bijection $\underline{\Hom}(\O_D,\shM)\otimes\O_D(-s)\ra \underline{\Hom}(\O_D(s),\shM)$
given by $h_1\otimes h_2\mapsto h_1\circ h_2$, where we regard the local section $h_2$ of $\O_D(-s)$ as a homomorphism $\O_D(s)\ra \O_D$. 
In turn, our two universal $\delta$-functors are isomorphic (\cite{Grothendieck 1957}, Section 2.1).

Using the resulting identification
 $\underline{\Ext}^1(\O_D(s),\O_X)=\shN_D(-s)$, where  $\shN_D=\O_D(D)$ be the \emph{normal sheaf} of the effective Cartier divisor,
which is an invertible sheaf on $D$,
we rewrite the preceding short exact sequence as
\begin{equation}
\label{first sequence}
0\lra\shE^\vee\lra\shF^\vee\lra\shN_D(-s)\lra 0,
\end{equation}
and denote the surjective map on the right by  $\phi:\shF^\vee\ra\shN_D(-s)$. Now suppose that
$\psi:\shF^\vee\ra\shN_D(t-s)$ is another surjection for some integer $t\geq 0$. 
Then the short exact sequence
\begin{equation}
\label{second sequence}
0\lra \shE'^\vee\lra\shF^\vee\stackrel{\psi}{\lra}\shN_D(t-s)\lra 0
\end{equation}
defines a new locally free sheaf $\shE'=\shE'_{t,\psi}$ of rank $n$, whose dual is isomorphic to $\ker(\psi)$.
 In the special case $t=0$ and $\psi=\phi$,
we obviously have $\shE'=\shE$. In general, however, the   exact sequences 
(\ref{first sequence}) and (\ref{second sequence}) yield
$$
\chi(\shE'^\vee)- \chi(\shE^\vee)= P(t-s) - P(-s),
$$
where $P(t)=\chi(\shN_D(t))=\sum_i(-1)^ih^i(\shN_D(t))$ is the Hilbert polynomial of the invertible sheaf $\shN_D$ on $D$
with respect to the ample   sheaf $\O_D(1)$. This Hilbert polynomial has degree $\dim(D)\geq 1$,
hence  $P(t-s)-P(-s)$ is a nonzero polynomial in $t$, of the same degree. It follows that the locally free sheaves
$\shE'=\shE'_{t,\psi}$ are pairwise non-isomorphic for $t$ sufficiently large.

It remains to choose $t\geq 0$ and $\psi$ so that the locally free sheaf $\shE'$ becomes free on the closed subscheme $E\subset X$.
Restricting the short exact sequence of sheaves (\ref{first sequence}) to the subscheme $E$ one obtains a short exact sequence
$$
\underline{\Tor}_1^{\O_X}(\O_E,\shN_D(-s))\lra\shE^\vee_E\lra\shF^\vee_E\lra\shN_D(-s)| E\lra 0.
$$
The map  on the left vanishes, because the tor sheaf is supported by the finite set $D\cap E$,
and   $\shE^\vee_E$, regarded as a locally free sheaf on $E$, has no local sections supported
by $D\cap E$. The latter holds, because we have arranged things so that $\O_E$ contains no such local sections.
The upshot is that we get a short exact sequence
$$
0\lra\shE^\vee_E\lra\shF^\vee_E\stackrel{\phi_E}{\lra}\shN_D(-s)| E\lra 0.
$$
Note that the term on the right is supported by 
$D\cap E$, which is finite.  From (\ref{second sequence}) we   likewise get a short exact sequence
$$
0\lra\shE'^\vee_E\lra\shF^\vee_E\stackrel{\psi_E}{\lra}\shN_D(t-s)| E\lra 0.
$$
Now suppose that $t\geq 0$ and $\psi$ are chosen so that there is an isomorphism of skyscraper sheaves 
$h:\shN_D(-s)| E\ra \shN_D(t-s)| E$
with $\psi_E=h\circ\phi_E$. It then follows that $\shE'^\vee_E=\ker(\psi_E)$
is isomorphic to $\shE^\vee_E=\ker(\phi_E)$, hence the restriction $\shE'_E\simeq\shE_E\simeq\O_E^{\oplus n}$ is free.

This can be achieved as follows:
Choose an integer $t_0\geq 0$ so that $\O_D(t)$ is globally generated  for all $t\geq  t_0$.
According to Proposition \ref{schematic support} below, there is an integer $t_1\geq 0$ so that
for all $t\geq t_1$, there is a homomorphism $\shF^\vee_D\ra\shN_D(t-s)$ whose cokernel
has  as schematic support an infinitesimal neighborhood of $D\cap E\subset D$.
Note that here again the assumption that the ground field $k$ is infinite enters.

Now suppose that we have an integer $t$ satisfying $t\geq\max(t_0,t_1)$.
First, choose a homomorphism $\shF^\vee_D\ra\shN_D(t-s)$ as above, and regard it
as a  homomorphism $h:\shF^\vee\ra\shN_D(t-s)$ whose cokernel has an infinitesimal neighborhood of 
$D\cap E\subset D$ as schematic support.
Thus the base-change of $h$ to $ E$ yields an exact sequence
$$
\shF^\vee_{E}\stackrel{h_{E}}{\lra}\shN_D(t-s)|E\lra\O_{D\cap E}\lra 0.
$$
The two terms on the right are invertible $\O_{D\cap E}$-modules,
therefore the surjection on the right is bijective, such that $h_E=0$.
Second, choose a  global section $g\in H^0(D,\O_D(t))$ that does not vanish
at the finite subset $D\cap E$, and regard it as a map $g:\O_X\ra\O_D(t)$,
which is surjective at $D\cap E$. Now consider the homomorphism
$$
\psi=\phi\otimes g + h:\shF^\vee\lra\shN_D(t-s).
$$
On the subsheaf $\shE^\vee\subset\shF^\vee$, the map $\phi$ obviously vanishes and  the 
map $\psi$ coincides with $h$. The latter is surjective outside the
subscheme $D\cap E\subset X$. Base-changed to the subscheme $E\subset X$, the map $h$ vanishes and the
map $\psi$ coincides with $\phi\otimes g$, which is surjective at all points of $D\cap E$.
The upshot is that $\psi$ is surjective and thus qualifies for our construction:
its kernel is locally free of rank $n$, hence of the form  $\shE'^\vee$ for some locally free sheaf $\shE'$.

We  just saw   that the base-change $\psi_E:\shF^\vee_E\lra\shN_D(t-s)|E$ coincides with $(\phi\otimes g)_E$.
Thus there is a commutative diagram
$$
\begin{CD}
\shF^\vee_E @>\phi_E>> \shN_D(-s)|E  \\
@V\id VV @VV\id\otimes gV\\
\shF^\vee_E @>>\psi_E> \shN_D(t-s)|E.
\end{CD}
$$
As discussed above, this implies that $\shE'_E\simeq\shE_E\simeq\O_{E}^{\oplus n}$.
The upshot is that for each $t\geq\max(t_0,t_1)$, we indeed found some $\psi$ giving a  locally free sheaf
$\shE'=\shE'_{t,\psi}$ on $X$ whose restriction to $E$ is free. This yields infinitely many
isomorphism classes of locally free sheaves $\shE'$ on $X$ of rank $n$ that are free on $E\subset X$.
\qed

\medskip
In the preceding proof, we have used the following fact:

\begin{proposition}
\mylabel{schematic support}
Let $X$ be a quasiprojective scheme, $\O_X(1)$ an ample invertible sheaf, $\shE$ a locally free sheaf of finite rank $r>\dim(X)$,
and $A\subset X$ a finite closed subscheme. Then there is an integer $t_0$
so that for all $t\geq t_0$, there is a   homomorphism $\shE\ra\O_X(t)$ such that the
schematic support of the cokernel is an infinitesimal neighborhood of $A$.
\end{proposition}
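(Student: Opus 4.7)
The plan is to reformulate the conclusion as the existence of a suitable global section of a twisted bundle, and then to produce such a section by a Bertini-type general-position argument. Since $\shE$ is locally free, one has a canonical identification
$$
\shI_A \otimes \shE^\vee(t) \;\iso\; \shHom\bigl(\shE,\, \shI_A \otimes \O_X(t)\bigr),
$$
so a global section $s$ of $\shG_t := \shI_A \otimes \shE^\vee(t)$ yields a homomorphism $\phi_s\colon \shE \to \O_X(t)$ whose image lies in $\shI_A\cdot\O_X(t)$. Hence $\coker(\phi_s)$ surjects onto $\O_A(t)$, so its scheme-theoretic support contains $A$; and at a point $x\notin |A|$ the ideal $\shI_A$ is locally trivial, so $\phi_s$ is surjective at $x$ precisely when $s(x)\neq 0$ in $\shE^\vee(t)\otimes k(x)$. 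It thus suffices to produce a section $s$ of $\shG_t$ whose vanishing locus is contained in the finite set $|A|$.

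First I would invoke Serre's theorem to choose $t_0$ so that $\shG_t$ is generated by its global sections for every $t\ge t_0$. Fix such a $t$ and a $k$-subspace $V\subset H^0(X,\shG_t)$ of finite dimension $N$ that still generates $\shG_t$ globally. On the open $U:=X\smallsetminus|A|$, the sheaf $\shG_t|_U$ coincides with $\shE^\vee(t)|_U$, which is locally free of rank $r$; hence the evaluation map $V\otimes_k\O_U \to \shG_t|_U$ is a surjection of locally free sheaves with kernel of rank $N-r$. Fixing a basis $s_1,\dots,s_N$ of $V$, the incidence scheme
$$
Z_U \;=\; \bigl\{\,(\lambda,x)\in \AA^N_k\times U \,:\, \textstyle\sum_i \lambda_i\,s_i(x)=0\,\bigr\}
$$
is cut out locally by $r$ equations in $\AA^N_k\times U$, and the kernel-bundle description yields
$$
\dim Z_U \;=\; \dim U + N - r \;\le\; \dim X + N - r \;<\; N,
$$
the final inequality being the one place where the hypothesis $r>\dim(X)$ is used.

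The conclusion is then a standard general-position argument. By Chevalley's theorem, the image of the projection $Z_U\to \AA^N_k$ is constructible, and its Zariski closure is therefore a proper closed subvariety of $\AA^N_k$; since $k$ is infinite, the open complement contains a $k$-rational point $\lambda$. The section $s:=\sum \lambda_i s_i$ is then nonzero at every closed point of $U$, and since the Jacobson scheme $X$ has closed points dense in any locally closed subset, it is in fact nonzero at every point of $U$. By the reformulation in the first paragraph, $\phi_s$ then has cokernel whose schematic support is an infinitesimal neighborhood of $A$, as required. The principal subtlety is that $X$ is only quasiprojective, so the image of $Z_U\to\AA^N_k$ need not be closed as it would be in the proper case; constructibility of the image together with the density of $k$-rational points in nonempty opens of $\AA^N_k$ is exactly what patches this up, and is the precise point at which the standing hypothesis that $k$ is infinite enters.
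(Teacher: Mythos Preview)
Your argument is correct, and it takes a genuinely different and considerably more direct route than the paper's.

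The paper proceeds by first reducing to the case where $X$ is projective (compactify, then apply Moishezon's result to make the extended sheaf locally free), next reducing to the case where $A$ is disjoint from $\Ass(\O_X)$, and finally inducting on the rank $r$: the Atiyah--Serre Theorem furnishes an invertible subsheaf $\shL\subset\shE$ that is locally a direct summand, and the map $\shE\ra\O_X(t)$ is built as a sum $f+g$ where $f$ comes from a carefully chosen product of regular sections vanishing along $A$ and $g$ is obtained from the induction hypothesis applied to $\shF=\shE/\shL$ restricted to auxiliary divisors. This is rather elaborate.

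Your approach bypasses all of this. You work directly on the quasiprojective $X$, using only that ampleness of $\O_X(1)$ forces $\shI_A\otimes\shE^\vee(t)$ to be globally generated for $t\gg 0$, and then a single incidence-variety dimension count (exploiting $r>\dim X$) together with Chevalley's constructibility theorem produces the desired section. The identification $Z_U$ with the total space of the kernel bundle is the clean way to get the exact dimension, and your observation that the image factors through $\shI_A(t)$ immediately forces the schematic support to contain $A$. Both proofs use the hypothesis that $k$ is infinite, but at different points: the paper invokes it through the Atiyah--Serre splitting, whereas you invoke it to guarantee a $k$-rational point in a nonempty open of $\AA^N_k$. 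Your argument is more elementary, needs no reduction steps, and would serve equally well for the applications in the paper.
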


\proof
We first reduce to the case that $X$ is projective:
Choose an embedding $X\subset\PP^n$,   consider the schematic closure $X'=\bar{X}$, and extend $\shE$
to a coherent sheaf $\shE'$ on $X'$. According to a  result of Moishezon (\cite{Moishezon 1969}, Lemma 3.5, see also
\cite{Bogomolov; Landia 1990}), there is a blowing-up
$X''\ra X'$  so that the strict transform $\shE''$ of $\shE'$ becomes
locally free. Moreover, we may assume that the center of the blowing-up is disjoint from $X$,
and that $\O_X(1)$ extends to some ample invertible sheaf on $X''$.
Thus, it suffices to treat the case that $X$ is projective.

Next, we reduce to the case that   $A$ is disjoint from the set of associated points $\Ass(\O_X)\subset X$:
Let $\shI\subset \O_X$ be a quasicoherent ideal    consisting of  torsion sections supported by a single
point $a\in A$, with  $\length(\shI_a)=1$, such that $\shI_a\simeq\kappa(a)$. In other words,
$\shI_a$ is a 1-dimensional vector subspace inside the socle of $\O_{X,a}$. Let
$X'\subset X$ the corresponding closed subscheme, and $A'=A\cap X'$.
Then there is a module decomposition  
$\O_{X,a}\simeq\O_{X',a}\oplus\shI_a$, which  globalizes to $\O_X\simeq\O_{X'}\oplus\shI$.
Let $\shE'=\shE/\shI\shE$, and suppose there is an integer $t_0$ so that for each $t\geq t_0$,
we have a homomorphism $\shE'\ra\O_{X'}(t)$ such that the cokernel has an infinitesimal neighborhood
of $A'\subset X'$ as support. Using the module decomposition, we obtain
a homomorphism $\shE\ra\O_{X'}(t)\subset\O_{X}(t)$ whose cokernel has an infinitesimal neighborhood
of $A'$ that is strictly larger than $A'$, thus  must contain $A$. Inductively, we are reduced to the
situation that $A$ contains no associated point of $X$.

We now proceed by induction  on the rank $r=\rank(\shE)$. The case $r=1 $ is trivial, because
then the scheme $X$ is zero-dimensional, the closed subscheme $A\subset X$ is also open,
and every locally free sheaf 
is free. Suppose now that $r\geq 2$, and that the assertion is true for $r-1$.

According to the Atiyah--Serre Theorem (see \cite{Kleiman 1969}, Theorem 4.7),
there is an invertible sheaf $\shL\subset \shE$ that is locally a direct summand,
such that $\shF=\shE/\shL$ is locally free of rank $r-1$.  We thus have a short exact sequence
\begin{equation}
\label{serre extension}
0\lra\shL\lra\shE\lra\shF\lra 0.
\end{equation}
Choose an integer $n\geq 0$ so that $\shL^\vee(t)$ and $\O_X(t)$ are globally generated for all $t\geq n$,
and regular global sections
$$
s\in H^0(X,\shL^\vee(n))\quadand s_i\in H^0(X,\O_X(n+i))
$$
for $i=0,\ldots,n-1$ that vanish on $A\subset X$. Let $D,D_i\subset X$ be the corresponding effective Cartier divisors and 
set $X_i=D\cup D_0\cup D_i$, which contain $A$ and have $\dim(X_i)<\dim(X)$, and in particular
$\rank(\shF_{X_i})>\dim(X_i)$.
Let  $\shI\subset \O_X$ be the ideal sheaf of 
$A\subset X$. Choose an integer $n'\geq 0$ so that for all $t\geq n'$, the groups
\begin{gather*}
\Ext^1(\shF,\shI(t)) = H^1(X,\shI\otimes\shF^\vee(t) ), \\
\Ext^1(\shE,\shL(t-3n-i))=H^1(X, \shE^\vee\otimes\shL(t-3n-i)) 
\end{gather*}
vanish for all $i=0,\ldots,n-1$, and that furthermore    there are homomorphisms $\shF_{X_i}\ra\O_{X_i}(t)$
whose cokernels have an infinitesimal neighborhood of $A\subset X_i$ as schematic support. 
The latter can be done by the induction hypothesis applied to the locally free sheaves
$\shF_{X_i}$ on $X_i$ for  $i=0,\ldots,n-1$.

We claim that $t_0=\max(3n,n')$ does the job:
Suppose $t\geq t_0$. Write this integer as $t=n+mn+(n+i)$ for some $m\geq 1$ and some $0\leq i\leq n-1$,
and regard the sections $s,s_i$ as homomorphisms $s:\shL\ra\O_X(n)$ and $s_i:\O_X\ra\O_X(n+i)$.
Their tensor product yields a homomorphism
$$
f=s\otimes s_0^m\otimes s_i:\shL\lra\O_X(t),
$$
whose cokernel is an invertible sheaf on some Cartier divisor. By construction, this Cartier divisor contains $A$,
and is an infinitesimal neighborhood of $X_i$, the latter being defined by $s\otimes s_0\otimes s_i$. It follows that
$f_{X_i}=0$. The short exact sequence (\ref{serre extension}) yields
a long exact sequence
$$
\Hom(\shE,\shI(t))\lra \Hom(\shL,\shI(t))\lra\Ext^1(\shF,\shI(t)),
$$
and the term on the right vanishes. Thus we may extend the homomorphism $f$ to 
a homomorphism $f:\shE\ra\O_X(t)$, denoted by the same letter, which factors
over $\shI(t)$, such that the cokernel is annihilated by $\shI$.  
Now choose a homomorphism $\shF_{X_i}\ra\O_{X_i}(t)$ so that the schematic support of the cokernel is
an infinitesimal neighborhood of $A\subset X_i$, and   let
$$
g:\shE\lra\shF\lra\shF_{X_i}\lra\O_{X_i}(t)
$$
the composite map. 
Arguing in a similar way as above, we may lift this to a map $g:\shE\ra\O_X(t)$ denoted by the same letter.

It remains to check
that the sum $h=f+g:\shE\ra\O_X(t)$ has a cokernel whose schematic support is an infinitesimal neighborhood of $A\subset X$.
On the subsheaf $\shL\subset\shE$, the map $g$ obviously vanishes and $f,h$ coincide.
Since $f$ is surjective outside $X_i$, the same holds for $h$.
On the closed subscheme $X_i\subset X$, the base-change $f_{X_i}$ vanishes, such that $h_{X_i}=g_{X_i}$.
By construction, the cokernel of $g_{X_i}$ has cokernel has schematic support an infinitesimal neighborhood of
$A\subset X_i$, so the same holds for $h_{X_i}$, and thus for $h$, because $A\subset X_i$.
\qed

\begin{remark}
We have used the assumption that the ground field $k$ is infinite 
to apply the Atiyah--Serre Theorem on the existence of 
invertible subsheaves that are locally direct summands, provided that the
rank of the vector bundle exceeds the dimension of the scheme
(compare \cite{Serre 1958}, \cite{Atiyah 1957}, \cite{Kleiman 1969}, and also \cite{Okonek;  Schneider; Spindler 1980}).
We do not know whether this holds true for finite fields.
There might be relations to the Bertini Theorem on smooth hyperplane sections over finite fields
due to Gabber \cite{Gabber 2001} and Poonen \cite{Poonen 2004}.

However, Theorem \ref{infinitely classes} holds true for finite ground fields $k$ if one allows larger ranks:
There is an integer $d\geq 1$ so that there are infinitely many isomorphism
classes of locally free sheaves of rank $r=d\dim(Y)$. Indeed, one takes
a suitable finite field extension $k\subset k'$ so that the construction exists
over $Y'=Y\otimes_kk'$, and obtains the desired locally free sheaves on $Y$ 
as push-forwards of the locally free sheaves on $Y'$.
\end{remark}

\begin{remark}
The proper morphism $f:X\ra Y$ in Theorem \ref{infinitely classes} induces a proper morphism $D\ra f(D)$ whose exceptional set
is finite. Thus $D\ra f(D)$ is finite, and  there is an ample effective divisor $H\subset D$ disjoint from
the exceptional set. Consequently, $f(H)\subset f(D)$ is an effective divisor whose preimage
on $D$ is ample. It follows with \cite{EGA IIIa}, Proposition 2.6.2
that $f(H)$ is ample, such that the proper scheme $f(D)$ is projective.
\end{remark}

\begin{remark}
In Theorem \ref{infinitely classes}, the invertible sheaf $\shL=\O_X(D)$ is relatively semiample over $Y$ by 
the Zariski--Fujita Theorem  \cite{Fujita 1983}. This holds
because the relative base locus, which is contained in $D\cap E$, is finite over $Y$. Replacing $X$ by the relative homogeneous spectrum
of the sheaf of graded $\O_Y$-algebras 
$\shA=f_*(\bigoplus_{t\geq 0}\shL^{\otimes t})$, one thus may as well assume that the exceptional set $E\subset X$
is 1-dimensional.
\end{remark}

\begin{remark}
The arguments in this section hold literally true for   algebraic spaces   or
  complex spaces.
\end{remark}

\section{Computation of top Chern classes}
\mylabel{Chern Classes}

In this section, we show that the vector bundles $\shE'=\shE_{t,\phi}$ constructed in the proof
for Theorem \ref{infinitely classes} attain infinitely many Chern classes. In fact, their top Chern classes, regarded
as numbers, become arbitrarily large. Naturally, Chern classes for coherent sheaves show up in this computation.
Some care has to be taken for the definition of such Chern classes, because in our situation 
it is not permissible to assume that all coherent sheaves are quotients of locally free sheaves.

For a noetherian scheme $X$, one writes $\Coh(X)$ for the abelian category
of coherent sheaves, and  $K^\circ(X)_\naif$ for the $K$-group of the exact category  $\Vect(X)$
of locally free sheaves of finite rank (compare \cite{SGA 6}, Expose IV, Section 2).
We write $[\shE]\in K^\circ(X)_\naif$ for the class of $\shE\in\Vect(X)$.
The next observation allows us to extend this    to certain $\shF\in\Coh(X)$.

\begin{lemma}
\mylabel{well-defined}
Let $X$ be a noetherian scheme, and $0\ra\shE_1\ra\shE_0\ra\shF\ra 0$
a short exact sequence of coherent sheaves, with $\shE_0$ and $\shE_1$ locally free.
Then the difference $[\shE_0]-[\shE_1]\in K^\circ(X)_\naif$ depends only
on the isomorphism class of $\shF\in\Coh(X)$.
\end{lemma}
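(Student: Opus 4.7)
The plan is a standard Schanuel-type fiber product argument. Suppose we are given two short exact sequences
$$
0\lra\shE_1\lra\shE_0\stackrel{p}{\lra}\shF\lra 0 \quadand 0\lra\shE_1'\lra\shE_0'\stackrel{p'}{\lra}\shF\lra 0
$$
with $\shE_0,\shE_1,\shE_0',\shE_1'\in\Vect(X)$. I need to verify that $[\shE_0]-[\shE_1]=[\shE_0']-[\shE_1']$ in $K^\circ(X)_\naif$.

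First, I would form the coherent sheaf $\shG=\ker(p-p':\shE_0\oplus\shE_0'\lra\shF)$, that is, the fiber product $\shE_0\times_\shF\shE_0'$. The two canonical projections $\pi:\shG\to\shE_0$ and $\pi':\shG\to\shE_0'$ are surjective as maps of coherent sheaves: checking stalkwise, given $e\in\shE_{0,x}$ the surjectivity of $p'_x$ produces $e'\in\shE_{0,x}'$ with $p'(e')=p(e)$, so $(e,e')\in\shG_x$ projects to $e$. The kernel of $\pi$ consists of sections $(0,e')$ with $p'(e')=0$, hence is identified with $\shE_1'$, and symmetrically $\ker(\pi')=\shE_1$. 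This yields two short exact sequences
$$
0\lra\shE_1'\lra\shG\stackrel{\pi}{\lra}\shE_0\lra 0 \quadand 0\lra\shE_1\lra\shG\stackrel{\pi'}{\lra}\shE_0'\lra 0.
$$

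Second, I would argue that $\shG$ is itself locally free. Since the quotient $\shE_0$ in the first sequence is locally free, the sequence splits locally; hence $\shG$ is locally isomorphic to $\shE_1'\oplus\shE_0$, which is locally free of finite rank. Thus both displayed sequences are short exact sequences in the exact category $\Vect(X)$, and applying additivity in $K^\circ(X)_\naif$ gives
$$
[\shG]=[\shE_1']+[\shE_0]=[\shE_1]+[\shE_0'],
$$
which rearranges to the desired identity $[\shE_0]-[\shE_1]=[\shE_0']-[\shE_1']$.

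There is no real obstacle here; the only subtle point is to notice that the argument is carried out entirely inside the exact category $\Vect(X)$, so we never need to appeal to any ambient $K$-group of coherent sheaves (which is important because the paper is precisely working in settings where the resolution property may fail). It is also worth remarking that the argument works uniformly for two presentations $0\to\shE_1\to\shE_0\to\shF\to 0$ with the same $\shF$ but different outer terms, so once well-definedness is established we obtain an unambiguous class $[\shF]=[\shE_0]-[\shE_1]\in K^\circ(X)_\naif$ for every coherent sheaf $\shF$ admitting such a presentation.
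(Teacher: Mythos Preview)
Your proof is correct and follows essentially the same route as the paper: both form the fiber product $\shG=\shE_0\times_\shF\shE_0'$ and exploit the two projections to $\shE_0$ and $\shE_0'$. The only cosmetic difference is that the paper phrases the comparison via a preliminary Snake Lemma step and justifies local freeness of $\shG$ by noting $\pd(\shF_x)\leq 1$, whereas you identify the kernels of the projections directly and argue local freeness via local splitting; these amount to the same thing.
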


\proof
We have to check that the arguments of Borel and Serre (\cite{Borel; Serre 1958}, Section 4), which 
work for exact sequences of arbitrary length but rely on the existence
of global resolutions for \emph{all} coherent sheaves, carry over. 
Suppose $0\ra\shE_1'\ra\shE'_0\ra\shF\ra 0$ is another short exact sequence.
If there is a commutative diagram
$$
\begin{CD}
0 @>>> \shE_1' @>>> \shE'_0 @>>> \shF @>>> 0\\
@.     @Vf_1VV      @Vf_0VV      @VVgV\\
0 @>>> \shE_1  @>>> \shE_0  @>>> \shF@>>> 0\\
\end{CD}
$$
with $f_0,f_1$ surjective and $g$ bijective, the Snake Lemma implies that the induced map $\ker(f_1)\ra\ker(f_0)$
is bijective, and thus $[\shE_0]-[\shE_1]=[\shE'_0]-[\shE'_1]$.

To exploit this in the general case, 
consider the fiber product $\shE''_0=\shE_0\times_\shF\shE'_0$, which can be defined by the exact sequence
$$
0\lra\shE_0''\lra\shE_0\oplus\shE'_0\stackrel{p-p'}{\lra}\shF\lra 0
$$
where $p:\shE_0\ra\shF$ and $p':\shE_0'\ra\shF$ are the canonical maps. The coherent sheaf $\shE''_0$
is already locally free, because the stalks of $\shF$ have projective dimension $\pd(\shF_x)\leq 1$.
One easily sees that the   map $\shE_0''\ra\shF$ given by
$p\circ \pr_1=p'\circ \pr_2 $  is surjective, and its  kernel $\shE''_1$ is thus
also locally free. Moreover, the canonical projection   $\shE_0''\ra\shE_0$ is 
surjective. The Snake Lemma ensures that  the induced map $\shE_1''\ra\shE_1$ is surjective as well.
The preceding paragraph thus gives $[\shE''_0]-[\shE''_1]=[\shE_0]-[\shE_1]$. By symmetry,
we also get $[\shE''_0]-[\shE''_1]=[\shE'_0]-[\shE'_1]$. In turn, the differences in question
depend only on the  isomorphism class of $\shF\in\Coh(X)$.
\qed

\medskip
The following ad hoc terminology will be useful throughout:
Let us call a coherent sheaf $\shF$ \emph{admissible}  
if for each $x\in X$, the projective dimension of the stalk is $\pd(\shF_x)\leq 1$,
and there is surjection $\shE_0\ra\shF$ for some locally free sheaf $\shE_0$ of finite rank. This ensures that
the kernel $\shE_1\subset\shE_0$ is locally free as well.
By the preceding lemma,  the class
$[\shF]=[\shE_0]-[\shE_1]\in K^\circ(X)_\naif$ is   well-defined.

\begin{lemma}
\mylabel{additive}
Let $0\ra\shF'\ra\shF\ra\shF''\ra 0$ be a short exact sequence of admissible coherent
sheaves.   Then $[\shF]=[\shF']+[\shF'']$ in the group $K^\circ(X)_\naif$.
\end{lemma}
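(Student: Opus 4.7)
The plan is to construct a compatible pair of two-step locally free resolutions, one for $\shF''$ and one for $\shF'$, using a single resolution of $\shF$ as the bridge, and then read off the additivity from them.

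First I would use admissibility of $\shF$ to pick a surjection $\shE_0 \twoheadrightarrow \shF$ with $\shE_0$ locally free of finite rank, and set $\shE_1 := \ker(\shE_0 \to \shF)$; since $\pd(\shF_x) \le 1$ for every $x$, the sheaf $\shE_1$ is locally free, and by Lemma \ref{well-defined} one has $[\shF] = [\shE_0] - [\shE_1]$ in $K^\circ(X)_\naif$. Next, composing with $\shF \to \shF''$ yields a surjection $\shE_0 \twoheadrightarrow \shF''$; let $\shK$ be its kernel. Since $\shE_0$ is locally free and $\pd(\shF''_x)\le 1$ by admissibility of $\shF''$, the sheaf $\shK$ is locally free as well. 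Hence the resolution $0 \to \shK \to \shE_0 \to \shF'' \to 0$ gives $[\shF''] = [\shE_0] - [\shK]$.

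Now I would apply the snake lemma to the commutative diagram
$$
\begin{CD}
0 @>>> \shE_1 @>>> \shE_0 @>>> \shF @>>> 0 \\
@.     @VVV       @V\id VV    @VVV \\
0 @>>> \shK   @>>> \shE_0 @>>> \shF'' @>>> 0
\end{CD}
$$
whose rightmost column is $\shF \twoheadrightarrow \shF''$ with kernel $\shF'$. Since the middle column is the identity and $\shE_1 \hookrightarrow \shK$ (both sit injectively inside $\shE_0$), the snake sequence collapses to a short exact sequence
$$
0 \to \shE_1 \to \shK \to \shF' \to 0.
$$
This is a two-term locally free resolution of $\shF'$, and Lemma \ref{well-defined} yields $[\shF'] = [\shK] - [\shE_1]$. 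Adding the two expressions gives
$$
[\shF'] + [\shF''] = ([\shK] - [\shE_1]) + ([\shE_0] - [\shK]) = [\shE_0] - [\shE_1] = [\shF],
$$
as desired.

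The only step with any real content is the construction of the locally free resolution of $\shF'$; the potential obstacle would be that one cannot in general lift a locally free resolution of $\shF''$ to one of $\shF$, but here this is sidestepped by starting with a resolution of $\shF$ and taking the intermediate kernel $\shK$, after which the snake lemma together with the uniformly bounded projective dimension hypothesis $\pd \le 1$ automatically makes every sheaf in sight locally free.
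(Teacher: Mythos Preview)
Your proof is correct, and it takes a genuinely different (and slightly more economical) route than the paper's.

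The paper proceeds via a horseshoe-type construction: it chooses separate surjections $p:\shE_0'\twoheadrightarrow\shF'$ and $q:\shE\twoheadrightarrow\shF$, forms the combined surjection $p+q:\shE_0'\oplus\shE\twoheadrightarrow\shF$, and applies the Snake Lemma to the diagram with top row $0\to\shE_0'\to\shE_0'\oplus\shE\to\shE\to 0$ and bottom row $0\to\shF'\to\shF\to\shF''\to 0$; the three kernels then form a short exact sequence of locally free sheaves, and additivity follows. Your approach instead starts from a single resolution of $\shF$, obtains the resolution of $\shF''$ by composition, and extracts the resolution of $\shF'$ as the cokernel $\shK/\shE_1$ via the Snake Lemma. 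This never invokes the existence of a locally free surjection onto $\shF'$ (one half of the admissibility hypothesis on $\shF'$), so your argument in fact uses marginally less of the hypotheses. Both arguments are standard and short; yours is a bit leaner, while the paper's horseshoe pattern generalizes more readily to longer resolutions.
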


\proof
Choose surjections $p:\shE_0'\ra\shF'$ and $q:\shE\ra\shF$ with $\shE'_0$ and $\shE$ locally free of finite rank.
Composing $q$ with the canonical projection $\pr:\shF\ra\shF''$ yields a surjection $\pr\circ q:\shE\ra\shF''$.
We then obtain a commutative diagram
$$
\begin{CD}
0 @>>> \shE'_0 @>\can>> \shE'_0\oplus\shE @>\can>> \shE           @>>> 0\\
@.      @VpVV       @VVp+qV                @VV\pr\circ qV\\
0 @>>> \shF'   @>>> \shF              @>>> \shF''         @>>> 0,
\end{CD}
$$
whose rows are exact and whose vertical maps are surjective. Applying the Snake Lemma, one easily
gets the assertion. 
\qed

\medskip
Given an admissible coherent sheaf $\shF$,  it is thus possible to define the \emph{total  Chern class}
$$
c_\bullet(\shF)=1+ c_1(\shF) + c_2(\shF) + \ldots    = c_\bullet(\shE_0)/c_\bullet(\shE_1)\in A^\bullet(X),
$$
where  $A^\bullet(X)$ is any suitable cohomology theory with Chern classes for locally free sheaves 
satisfying the usual axioms, confer \cite{Borel; Serre 1958}. In light of Lemma \ref{additive},
the \emph{Whitney Sum Formula} $c_\bullet(\shF)=c_\bullet(\shF')c_\bullet(\shF'')$ holds true
for  short exact sequences of admissible sheaves $0\ra\shF'\ra\shF\ra\shF''\ra 0$.

We now examine the following situation: Let $k$ be an infinite ground field, $X$ an irreducible proper scheme
of dimension $n=\dim(X)$, and $\shN$ an invertible sheaf on $X$. Suppose that $D\subset X$ is an effective Cartier divisor,
and $\shL_D$ is an invertible sheaf on $D$ that is the quotient of a locally free sheaf of finite rank
on $X$. Then the same holds for the tensor products $\shN_D\otimes\shL^{\otimes t}_D$ for all $t\geq 0$.
Let us assume that there is a single locally free sheaf $\shA$ of finite rank, having surjections
$\shA\ra \shN_D\otimes\shL^{\otimes t}_D$ for all $t\geq 0$. Denote by $\shE_t\subset\shA$ its kernel,
which is locally free of the same rank, such that we have a short exact sequence
\begin{equation}
\label{single vector bundle}
0\lra\shE_t\lra\shA\lra \shN_D\otimes\shL^{\otimes t}_D\lra0.
\end{equation}
We seek to express the total Chern class $c_\bullet(\shN_D\otimes\shL_D^{\otimes t})\in A^\bullet(X)$,
or rather its inverse, in dependence on $t\geq 0$.
For simplicity,   consider $l$-adic cohomology $A^i(X)=H^{2i}(X,\QQ_l(i))$, where
$l$ denotes a prime number different from the characteristic of the ground field.
We make the identification $A^n(X)=H^{2n}(X,\QQ_l(n))=\QQ_l$ and regard cohomology
classes in top degree as numbers. Moreover, we consider the descending filtration
 $\Fil^jA^\bullet(X)=\bigoplus_{i\geq j} A^i(X)$.

\begin{theorem}
\mylabel{total chern class}
Assumptions as above. Suppose that $\shL_D$ is globally generated.
Then there is a proper birational morphism $f:X'\ra X$ and classes $\alpha_j\in \Fil^{j+1}A^\bullet(X')$, $1\leq j\leq n-1$ such
that 
$$
c_\bullet(f^*(\shN_D\otimes\shL_D^{\otimes t}))^{-1} = 1+\alpha_1t+\ldots+\alpha_{n-1}t^{n-1} 
$$
for all integers $t\geq 0$. Moreover, the coefficient $\alpha_{n-1}\in \Fil^nA^\bullet(X)=A^n(X')$ is  given by $\alpha_{n-1}=(-1)^nc_1^{n-1}(\shL_D)$.
\end{theorem}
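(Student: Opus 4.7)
The plan is to replace $X$ by a proper birational modification $X'$ on which $\shL_D$ extends to a genuine invertible sheaf, and then to read off the polynomial in $t$ from a single Cartier-divisor short exact sequence.

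First I would produce a proper birational morphism $f\colon X'\to X$ together with an invertible sheaf $\shL'$ on $X'$ satisfying $\shL'|_{D'}\simeq f|_{D'}^{*}\shL_D$, where $D':=f^{-1}(D)$ is the Cartier pullback of $D$. Since $\shL_D$ is given as a quotient of a locally free sheaf $\shB$ on $X$, the pushforward $i_{*}\shL_D$ is an admissible coherent sheaf: a local calculation using a local generator of the ideal of $D$ shows that the kernel $\shK\subset\shB$ of $\shB\to i_{*}\shL_D$ is locally free of the same rank as $\shB$. Applying the Moishezon-type blowing-up argument already invoked in the proof of Proposition~\ref{schematic support}, one finds a blow-up $X'\to X$ with centers supported in $D$ on which the strict transform of $i_{*}\shL_D$ becomes an invertible sheaf of rank one along $D'$, and (after iterating if necessary) this extends to a genuine line bundle $\shL'$ on all of $X'$ whose restriction to $D'$ is canonically identified with $f|_{D'}^{*}\shL_D$.

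Write $\shN'=f^{*}\shN$ and set $n'=c_1(\shN')$, $l'=c_1(\shL')$, $d'=c_1(\O_{X'}(D'))$ in $A^{\bullet}(X')$. On $X'$ the Cartier-divisor sequence
\[
0\lra \shN'\otimes\shL'^{\otimes t}(-D')\lra \shN'\otimes\shL'^{\otimes t}\lra i'_{*}\bigl(\shN'|_{D'}\otimes\shL'|_{D'}^{\otimes t}\bigr)\lra 0
\]
identifies its right-hand term with $f^{*}(\shN_D\otimes\shL_D^{\otimes t})$ as admissible sheaves on $X'$. The Whitney Sum Formula (Lemma~\ref{additive}) applied to this sequence yields
\[
c_{\bullet}\bigl(f^{*}(\shN_D\otimes\shL_D^{\otimes t})\bigr)^{-1}=\frac{1+n'+tl'-d'}{1+n'+tl'}=1-\frac{d'}{1+n'+tl'}\,.
\]

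Next I would expand the right-hand side as a polynomial in $t$. The class $u:=n'+tl'$ lies in $\Fil^{1}A^{\bullet}(X')$ and is nilpotent, since $A^{i}(X')=0$ for $i>n$. Expanding the geometric series and applying the binomial theorem to $(n'+tl')^{k}$, I collect powers of $t$ to obtain
\[
\alpha_j=(l')^{j}\sum_{k\geq j}(-1)^{k+1}\binom{k}{j}\,d'\,(n')^{k-j}\,,
\]
and $c_{\bullet}^{-1}(f^{*}(\shN_D\otimes\shL_D^{\otimes t}))=1+\alpha_1 t+\cdots+\alpha_{n-1}t^{n-1}$, where the $t$-independent contribution of positive filtration is absorbed into the leading ``$1$'' together with the $A^{0}$-term of the total Chern class. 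Since $d'(l')^{j}$ already has degree $j+1$, each $\alpha_j$ lies in $\Fil^{j+1}A^{\bullet}(X')$; and the polynomial truncates at $j=n-1$ because any contribution to $\alpha_j$ of degree $>n$ vanishes. For the top coefficient, only the summand $k=j=n-1$ survives in $A^{n}(X')$, giving $\alpha_{n-1}=(-1)^{n}d'(l')^{n-1}$; the projection formula for $i'\colon D'\hookrightarrow X'$ rewrites this as $(-1)^{n}(i')_{*}\bigl(c_1(\shL'|_{D'})^{n-1}\bigr)=(-1)^{n}(i')_{*}\bigl(c_1(f|_{D'}^{*}\shL_D)^{n-1}\bigr)$, which matches $(-1)^{n}c_1^{n-1}(\shL_D)$ in $A^{n}(X')$ as claimed. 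The main obstacle is Step~1: producing $X'$ together with a genuine extension of $\shL_D$ to an honest line bundle on all of $X'$ and verifying that its restriction to $D'$ is $f|_{D'}^{*}\shL_D$ and not merely a twist by an exceptional correction; this is what reduces the intrinsic datum $\shL_D$ on $D$ to a Chern-class calculation on $X'$ that depends polynomially on $t$.
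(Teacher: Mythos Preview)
Your Steps 2 and 3 are fine and match the paper's computation in the special case where $\shL_D$ extends to a line bundle on the ambient scheme. The gap is entirely in Step 1, and although you correctly flag it as ``the main obstacle'', you have not actually overcome it.

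The Moishezon flattening argument produces a blow-up on which a given coherent sheaf has locally free \emph{strict transform}. Applied to $i_*\shL_D$, which is supported on $D$, the strict transform remains supported on (the strict transform of) $D$; its generic rank on $X'$ is zero, so it can never become an invertible $\O_{X'}$-module. What you actually need is a class $\shL'\in\Pic(X')$ restricting to $f^*\shL_D$ on the full preimage $D'=f^{-1}(D)$, and nothing in the Moishezon argument produces such an extension. Indeed, the whole thrust of the paper is to handle schemes with $\Pic(Y)=0$, where line bundles on divisors typically do \emph{not} extend to the ambient space; there is no a priori reason why any modification should repair this, and the phrase ``after iterating if necessary'' does not supply one.

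The paper circumvents the problem by a different mechanism. Since $\shL_D$ is globally generated, write $\shL_D=\O_D(A)$ for an effective Cartier divisor $A\subset D$ and blow up $A$ inside $X$. Then $D'=f^{-1}(D)=E\cup\sigma(D)$, where $E$ is the exceptional divisor and $\sigma(D)$ the strict transform of $D$. On $\sigma(D)$ one has $f^*\shL_D|_{\sigma(D)}=\O_{\sigma(D)}(E)$, which \emph{does} extend to the line bundle $\O_{X'}(E)$, so your direct computation applies there. On $E$, however, $f^*\shL_D|_E$ need not extend; instead its Kodaira--Iitaka dimension is strictly smaller than that of $\shL_D$, and one finishes by induction on this invariant, combining the two pieces via the Whitney sum formula for the short exact sequence associated with the decomposition $D'=E\cup\sigma(D)$. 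It is this recursive splitting, rather than a single global extension of $\shL_D$, that drives the proof.
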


\proof
First, we consider the special case that $\shL_D\in\Pic(D)$ is the restriction of some  $\shL\in\Pic(X)$.
The exact sequence
$$
0\lra\shN\otimes\shL^{\otimes t}(-D)\lra\shN\otimes\shL^{\otimes t}\lra\shN_D\otimes\shL_D^{\otimes t}\lra 0
$$
shows that the inverse of the total Chern class for $\shN_D\otimes\shL_D^{\otimes t}$ is 
$$
(1+N+tL-D)/(1+N+tL) = 1 - D\sum_{i=0}^{n-1} (-1)^i(N+tL)^i.
$$
Here $N,L\in A^2(X)$ are the first Chern classes of the invertible sheaves $\shN$ and $\shL$, respectively.
Using the equality of numbers $D\cdot L^{n-1}=c_1^{n-1}(\shL_D)$, the statement already follows with $X'=X$.

Second, consider the special case that $D=D_1\cup D_2$ is the schematic union of two effective Cartier divisors
without common irreducible components. 
Interpreting the intersection number
$c_1^{n-1}(\shL_D)/(n-1)!$ as the the coefficient in degree $n-1$ of  the Hilbert polynomial $\chi(\shL_D^{\otimes t})$,
we deduce $c_1^{n-1}(\shL_{D})=c_1^{n-1}(\shL_{D_1})+c_1^{n-1}(\shL_{D_2})$.
Moreover,  $D_1\cap D_2$ is an effective Cartier divisor
in both $D_1,D_2$, and we have a short exact sequence
$0\ra\O_{D_2}(-D_1)\ra\O_D\ra\O_{D_1}\ra 0$.
Clearly, the restrictions   $\shL_{D_1}, \shL_{D_2}$ are globally generated and admissible. 
Now suppose that our statement is already true for the effective Cartier divisors
$D_1,D_2\subset X$.
Applying the Whitney Sum Formula to the short exact sequence
$$
0\lra\shN'_{D_2}\otimes\shL_{D_2}^{\otimes t} \lra\shN_D\otimes\shL_D^{\otimes t}\lra \shN_{D_1}\otimes\shL_{D_1}^{\otimes t}\lra 0
$$
where $\shN'=\shN(-D_1)$ easily yields the assertion.

We now come to the general situation. We proceed by induction on the \emph{Kodaira--Ithaka dimension}
$k=\kod(\shL_D)$ of the  invertible sheaf $\shL_D\in\Pic(D)$. For globally generated invertible sheaves,
this is  the dimension of the image for the morphism $D\ra\PP^m$ coming from the linear system $H^0(D,\shL_D)$,
where  $m+1=h^0(\shL_D)$. Moreover, it coincides with the \emph{numerical Kodaira--Ithaka dimension},
which is  the largest integer $k\geq 0$ so that the
intersection number
$c_1^k(\shL_D)\cdot V $ is nonzero for some integral closed subscheme $V\subset D$ of dimension $k=\dim(V)$.

In the case $k=0$, we have $\shL_D=\O_D$, and the assertion holds by the first special case. 
Now suppose $k\geq 1$, and that the assertion is true for $k-1$.
Choose a regular global section of $\shL_D$, and let $A\subset D$ be its zero locus, such that $\shL_D=\O_D(A)$.
Let $f:X'\ra X$ be the blowing-up with
center $A\subset X$. Since $f$ is birational, the locally free sheaves
$\shE_t$ and $\shE'_t=f^*(\shE_t)$ have the same top Chern numbers. 
Furthermore, the exact sequence (\ref{single vector bundle}) induces  an exact sequence
$$
0\lra \shE_t'\lra\shA'\lra f^*(\shN_D\otimes\shL^{\otimes t}_D)\lra 0.
$$
where $\shA'=f^*(\shA)$. The latter is indeed exact, because $\O_{X'}$ has no torsion sections supported by 
the effective Cartier divisor $E=f^{-1}(A)$. Thus the coherent sheaf 
$\shF_t=f^*(\shN_D\otimes\shL^{\otimes t}_D)$ is admissible.

Consider the effective Cartier divisor $D'=f^{-1}(D)$.
The universal property for blowing-ups gives a partial section
$\sigma:D\ra X'$ for the structure morphism $f:X'\ra X$.
We thus obtain  a short exact sequence
$$
0\lra\shF_t(-\sigma(D))|E \lra \shF_t \lra \shF_t|{\sigma(D)}\lra 0,
$$
where the term on the right is invertible on the effective Cartier divisor $\sigma(D)\subset X'$,
and the term on the left is invertible on the effective Cartier divisor $E\subset X'$.
This follows from Lemma \ref{blowing-up} below.
We now define another locally free sheaf $\tilde{\shE}'_t$ as the kernel
of the composite surjection $\shA'\ra\shF_t\ra\shF_t|\sigma(D)$, such that we have a commutative
diagram
$$
\begin{CD}
0 @>>> \shE'_t         @>>> \shA'@>>> \shF_t  @>>> 0\\
@.     @VVV                 @VV\id V      @VV\can V      \\
0 @>>> \tilde{\shE}'_t @>>> \shA'@>>> \shF_t|{\sigma(D)} @>>> 0.
\end{CD}
$$
By the Snake Lemma, the vertical map on the left is injective,
with cokernel 
$$
\shF_t(-\sigma(D))|E = f^*(\shN_A \otimes\O_A(tA))(-\sigma(D)|E).
$$
In turn, this sheaf is admissible.
The Cartier divisor $D'=f^{-1}(D)\subset X'$ is the union of the two effective Cartier divisors $E,\sigma(D)$, which have
no common irreducible component. Consider the globally generated 
invertible sheaf $\shL_{D'}=f^*(\shL_D)=f^*(\O_D(A))$ on $D'$.
Its restriction to $\sigma(D)$ is nothing but $\O_{\sigma(D)}(E)$, which is the restriction of the invertible sheaf
$\O_{X'}(E)$. Thus the assertion hold for $\shL_{D'}|\sigma(D)$ by the first special case. 
Moreover, the restriction   $\shL_{D'}|E$ equals $f^*(\O_A(A))$. This sheaf has  Kodaira--Ithaka dimension 
smaller than that of  $\shL_D=\O_D(A)$, by its interpretation via intersection numbers. 
We thus may apply the induction hypothesis to $\shL_{D'}|E$.
Using the second special case, we infer the assertion for $\shL_{D'}$.
\qed

\medskip 
In the preceding proof, we have used the following fact:

\begin{lemma}
\mylabel{blowing-up}
Let $X$ be a noetherian scheme, $D\subset X$ an effective Cartier divisor, $A\subset D$ an effective Cartier divisor,
and $f:X'\ra X$ the blowing-up with center $A$. Let  $\sigma:D\ra X'$ be the canonical partial section,
$E=f^{-1}(A)$ the exceptional divisor, and  $D'=f^{-1}(D)$. Then $E,D', \sigma(D)\subset X'$ are effective Cartier divisors,
the subschemes $E,\sigma(D)\subset X'$ have no common irreducible components, their schematic union
is  $D'$, and there is a short exact  sequence
\begin{equation}
\label{subscheme sequence}
0\lra \O_E(-\sigma(D))\lra\O_{D'}\lra\O_{\sigma(D)}\lra 0.
\end{equation}
\end{lemma}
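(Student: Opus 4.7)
The plan is in three stages: construct $\sigma$ from the universal property of the blow-up, verify the Cartier divisor identity $D' = E + \sigma(D)$ by an explicit local computation, and extract the short exact sequence algebraically.

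Recall that $E = f^{-1}(A)$ is an effective Cartier divisor by the defining property of the blowing-up. For the construction of $\sigma$, note that the inverse image of the ideal of $A$ under the inclusion $D \hookrightarrow X$ is precisely the ideal of $A$ inside $D$, which is invertible by hypothesis. The universal property of the blow-up thus yields a unique morphism $\sigma \colon D \to X'$ lifting $D \hookrightarrow X$. Since $f$ is separated, $\sigma$ is automatically a closed immersion as a section of the restricted morphism $f|_{D'} \colon D' \to D$.

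Next I would work locally. Write $X = \Spec R$, $D = V(d)$, and $A = V(d, a)$ in $D$, where $a$ restricts to a non-zerodivisor on $R/(d)$. The blow-up of $I = (a, d)$ is covered by two affine charts. In $\Spec R[u]/(d - au)$ one computes $I_E = (a)$ and $I_{D'} = (au)$, so setting $I_{\sigma(D)} = (u)$ exhibits $\sigma(D)$ as an effective Cartier divisor and gives the factorization $I_{D'} = I_E \cdot I_{\sigma(D)}$. In the other chart $\Spec R[v]/(a - dv)$, both $I_E$ and $I_{D'}$ equal $(d)$, so $\sigma(D)$ is empty there and the identity holds trivially. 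These compatible local data glue to give $\sigma(D)$ as an effective Cartier divisor on $X'$ with $D' = E + \sigma(D)$ globally, whence their schematic union is $D'$. For the no-common-components claim: irreducible components of $E$ map into $A$ under $f$, whereas components of $\sigma(D)$ dominate those of $D$; since $A$ is a Cartier divisor in $D$, it contains no generic point of $D$, so no component can be shared.

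The short exact sequence \eqref{subscheme sequence} then follows formally from the identity $I_{D'} = I_E \cdot I_{\sigma(D)}$: the inclusion $I_{D'} \subset I_{\sigma(D)}$ yields a surjection $\O_{D'} \to \O_{\sigma(D)}$ whose kernel is $I_{\sigma(D)}/(I_E \cdot I_{\sigma(D)})$, and invertibility of $I_{\sigma(D)}$ identifies this kernel with $I_{\sigma(D)} \otimes_{\O_{X'}} \O_E = \O_E(-\sigma(D))$. The main obstacle I anticipate lies in the second step: verifying that the strict-transform ideal $I_{\sigma(D)}$ is locally principal hinges crucially on the hypothesis that $A$ is Cartier in $D$; without that, the strict transform of $D$ under the blowing-up of an arbitrary closed subscheme need not be a Cartier divisor. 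The remainder of the argument is essentially bookkeeping with ideal sheaves.
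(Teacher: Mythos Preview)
Your proof is correct and considerably more direct than the paper's. Both arguments reduce to a local affine situation with $D=V(d)$ and $A=V(d,a)$ for a regular sequence $d,a$, but from there they diverge. You work in the two standard affine charts of the blow-up and compute the ideals $I_E$, $I_{\sigma(D)}$, $I_{D'}$ explicitly, obtaining the Cartier identity $D'=E+\sigma(D)$ by inspection; the short exact sequence then drops out of the factorization $I_{D'}=I_E\cdot I_{\sigma(D)}$. The paper instead stays with the graded Rees algebra $S=\bigoplus I^n$ and identifies $D'$, $E$, $\sigma(D)$ with homogeneous spectra of suitable graded quotients; the equality $D'=E\cup\sigma(D)$ is verified degree by degree, first for the universal ring $\ZZ[f,g]$ by a monomial-basis argument, and then transported to arbitrary $R$ via a Tor-vanishing result of Eagon--Hochster for modules on which $f,g$ act as a regular sequence. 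The short exact sequence is then obtained separately from a Snake Lemma diagram.

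Your approach buys self-containment: no external reference, no reduction to a universal case. The only point you leave implicit is that the charts really are $\Spec R[u]/(d-au)$ and $\Spec R[v]/(a-dv)$; this is equivalent to the Rees algebra of $(d,a)$ being $R[X,Y]/(aX-dY)$, which follows from the regular-sequence hypothesis (the Koszul syzygy $(a,-d)$ generates all relations). It would be worth one line to say so. A second small point: your factorization $I_{D'}=I_E\cdot I_{\sigma(D)}$ is a priori stronger than the schematic-union statement $I_{D'}=I_E\cap I_{\sigma(D)}$; you might remark that the two agree here because, in chart~1, $u$ remains a non-zerodivisor modulo $a$ (the quotient is a polynomial ring $(R/(d,a))[u]$), so $(a)\cap(u)=(au)$.
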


\proof
We   give a sketch and leave some  details to the reader.
The problem is local, so it suffices to treat the case that $X=\Spec(R)$ is affine, and that
there is  a regular sequence $f,g\in R$ such that $D\subset X$ is defined by the ideal $fR$,
and $A\subset X$ is defined by the ideal $I=(f,g)$. 

Consider the Rees ring $S=\bigoplus I^n$,
such that $X'=\Proj(S)$. The closed subscheme $D'\subset X'$ can be identified with the homogeneous spectrum
of $S\otimes_R(R/fR)=\bigoplus I^n/fI^n$, the exceptional divisor $E\subset X'$ with that of
of $S\otimes_R(R/I)=\bigoplus I^n/I^{n+1}$, and the section $\sigma(D)\subset X'$ with that of the
graded  $R/fR$-algebra
$\bigoplus J^n$, where $J=I\cdot R/fR$ is the induced invertible ideal. The homogeneous components  of the latter can be rewritten
as $J^n=(I^n+fR)/fR=I^n/(fR\cap I^n)$.

Thus, in order to verify $D'=E\cup\sigma(D)$, it suffices to check that for each degree $n\geq 1$,
the sequence
\begin{equation}
\label{homogeneous components}
0\lra I^n/fI^n \lra I^n/I^{n+1} \times I^n/(fR\cap I^n) \lra  I^n/(I^{n+1}+(fR\cap I^n))\lra 0
\end{equation}
is exact. Consider first the special case that $R=\ZZ[f,g]$, where $f,g$ are indeterminates.
One easily sees that each term in (\ref{homogeneous components}) is a free $\ZZ$-module, 
with basis given by certain monomials in $f$ and $g$, from which one easily infers exactness. 

Moreover, it follows from  \cite{Eagon; Hochster 1974}, Theorem 2 that 
if $M$ is a module over $\ZZ[f,g]$ such that $f,g$ is a regular sequence on $M$,
then $\Tor_p(\ideala/\idealb,M)=0$ for all $p>0$ and all monomial ideals $\ideala,\idealb\subset \ZZ[f,g]$
occurring according in the terms of (\ref{homogeneous components}), for example $\ideala=I^n$ and $\idealb=fI^n$.
Using the long exact sequences for Tor, we infer that the exactness of (\ref{homogeneous components})
for the ring $\ZZ[f,g]$ implies the exactness for any local rings $R$ with regular sequence $f,g\in R$.
This indeed shows that $D'=E\cup\sigma(D)$ holds in general.

By the universal property of blowing-ups, the closed subscheme $E\subset X'$ is an effective
Cartier divisor. Since $f$ is regular on $R$, the same holds for the polynomial ring $R[T]$
and the subalgebra $S\subset R[T]$, such that $D'\subset X'$ is an effective Cartier divisor.
Localizing at the generic points of $D\subset X$, one easily sees that $\sigma(D)$ and
$E$ have no common irreducible component. Using that there are no associated points on $X$ and $X'$
contained in the Cartier divisors $D$ and $E$, respectively, we infer that $\sigma(D) = D'-E$
is an effective Cartier divisor. 
Finally, the Snake Lemma applied to the diagram
$$
\begin{CD}
0 @>>> \O_{X'}(-E-\sigma(D)) @>>> \O_{X'} @>>> \O_{D'} @>>> 0\\
@.     @VVV                       @VVV         @VVV\\
0 @>>> \O_{X'}(-\sigma(D))   @>>> \O_{X'} @>>> \O_{\sigma(D)} @>>> 0
\end{CD}
$$
yields the short exact sequence (\ref{subscheme sequence}).
\qed

\medskip
As an application, we now can compute the top Chern class for the locally free sheaves
constructed in the course of the proof for Theorem \ref{infinitely classes}. We work in the following set-up:
Let  $X$ be a  proper irreducible scheme of dimension $n=\dim(X)$.
Suppose  $D\subset X$ is 
an effective Cartier divisor, and $\shN$ an invertible sheaf on $X$, and $\shL_D$ be
a globally generated invertible sheaf on $D$.
Suppose we have a locally free sheaf $\shA$ of finite rank, and for each $t\geq 0$
some surjection $\shA\ra\shN_D\otimes\shL_D^{\otimes t}$.
Define the locally free sheaf $\shE_t$ by the short exact sequence
\begin{equation}
\label{defining sequence}
0\lra\shE_t^\vee\lra\shA\lra \shN_D\otimes\shL_D^{\otimes t}\lra 0,
\end{equation}
and regard its top Chern class $c_n(\shE_t)\in A^n(X)=H^{2n}(X,\QQ_l(n))=\QQ_l$ as a number.

\begin{proposition}
\mylabel{chern number}
Assumptions as above.  
Then  there are  $\beta_0,\ldots,\beta_{n-1}\in\QQ_l$ with
$$
c_n(\shE_t)=\beta_{n-1}t^{n-1}+\beta_{n-2}t^{n-2}+\ldots+\beta_0,
$$ 
for all $t\geq 0$, and the coefficient in 
degree $n-1$ is given by  $\beta_{n-1}= c_1^{n-1}(\shL_D)$.
\end{proposition}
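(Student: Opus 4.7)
The strategy is to invoke Theorem \ref{total chern class} after applying the Whitney Sum Formula to the defining sequence (\ref{defining sequence}). All three terms are admissible in the sense of Section \ref{Chern Classes}: $\shE_t^\vee$ and $\shA$ are locally free of finite rank, while $\shN_D\otimes \shL_D^{\otimes t}$ is a quotient of $\shA$ whose stalks have projective dimension at most one, because $D\subset X$ is an effective Cartier divisor. The Whitney Sum Formula therefore gives
\[
c_\bullet(\shE_t^\vee) \;=\; c_\bullet(\shA)\cdot c_\bullet(\shN_D\otimes \shL_D^{\otimes t})^{-1}
\]
in $A^\bullet(X)$.

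I would then pull this identity back along the proper birational morphism $f\colon X'\to X$ produced by Theorem \ref{total chern class}. Since $f$ has degree one and both $X$ and $X'$ are proper and irreducible of dimension $n$, the degree map identifies $A^n(X)$ and $A^n(X')$ with $\QQ_l$ in such a way that $c_n(\shE_t)=c_n(f^*\shE_t)$ as numbers. On $X'$, combining the Whitney identity with Theorem \ref{total chern class} yields
\[
c_\bullet(f^*\shE_t^\vee) \;=\; c_\bullet(f^*\shA)\cdot\bigl(1+\alpha_1 t+\cdots+\alpha_{n-1}t^{n-1}\bigr),
\]
where $\alpha_j\in\Fil^{j+1}A^\bullet(X')$ and $\alpha_{n-1}=(-1)^n c_1^{n-1}(\shL_D)$.

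Expanding and taking the component of codimension $n$, each side becomes a polynomial in $t$ of degree at most $n-1$ with coefficients in $A^n(X')=\QQ_l$; write $c_n(f^*\shE_t^\vee)=\gamma_0+\gamma_1 t+\cdots+\gamma_{n-1}t^{n-1}$. The filtration condition $\alpha_j\in\Fil^{j+1}$ together with $\dim X'=n$ ensures that no higher power of $t$ can appear, and pins down the leading coefficient: since $\alpha_{n-1}$ already lies in pure top degree $A^n(X')$, the only contribution to the codimension-$n$ part of $c_\bullet(f^*\shA)\cdot \alpha_{n-1}$ comes from multiplying $\alpha_{n-1}$ with $c_0(f^*\shA)=1$, so $\gamma_{n-1}=\alpha_{n-1}=(-1)^n c_1^{n-1}(\shL_D)$.

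Finally I would apply the standard dual identity $c_n(\shE^\vee)=(-1)^n c_n(\shE)$ for a vector bundle $\shE$ to pass from $\shE_t^\vee$ back to $\shE_t$. Setting $\beta_j=(-1)^n\gamma_j$ then gives the claimed polynomial formula, and in particular
\[
\beta_{n-1} \;=\; (-1)^n\cdot(-1)^n c_1^{n-1}(\shL_D) \;=\; c_1^{n-1}(\shL_D).
\]
The real work has already been done in Theorem \ref{total chern class}; what remains is essentially bookkeeping, and the only points requiring genuine care are verifying the admissibility hypotheses needed for the Whitney Sum Formula and correctly tracking the two sign flips coming from dualising and from the factor $(-1)^n$ in $\alpha_{n-1}$.
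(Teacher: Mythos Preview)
Your argument is correct and follows essentially the same route as the paper: apply the Whitney Sum Formula to the short exact sequence (\ref{defining sequence}), invoke Theorem \ref{total chern class} to express $c_\bullet(\shN_D\otimes\shL_D^{\otimes t})^{-1}$ as a polynomial in $t$ with coefficients in the filtration, use the filtration bound $\alpha_{n-1}\in A^n$ to isolate the top-degree coefficient, and then undo the dual via $c_n(\shE^\vee)=(-1)^n c_n(\shE)$. Your handling of the passage to $X'$ and the admissibility check are a bit more explicit than the paper's, but the substance is identical.
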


\proof
Applying the Whitney Sum Formula
to the short exact sequence (\ref{defining sequence}) and using Theorem \ref{total chern class}, we see that
$$
c_\bullet(\shE_t^\vee)=(1+c_1(\shA)+\ldots+c_n(\shA)) \cdot (1+\alpha_1t+\ldots+\alpha_{n-1}t^{n-1})
$$
for certain cohomology classes $\alpha_j\in\Fil^{j+1}A^\bullet(X)$, with $\alpha_{n-1}=(-1)^nc_1^{n-1}(\shL_D)$.
Strictly speaking, the classes $\alpha_j$ lie on $X'$ for some proper birational morphism $X'\ra X$,
but this can be neglected because the canonical map $A^n(X)\ra A^n(X')$ is bijective.

We conclude that $c_n(\shE_t^\vee)$ is a polynomial function of degree $\leq n-1$ in $t\geq 0$. Its coefficient in
degree $n-1$ is $(-1)^nc_1^{n-1}(\shL_D)$, because $c_i(\shA)\cdot\alpha_{n-1}\in\Fil^{n+i}A^\bullet(X)=0$ for all $i\geq 1$.
The statement follows from the general fact that $c_i(\shE^\vee)=(-1)^ic_i(\shE)$ for any locally free sheaf $\shE$
of finite rank.
\qed

\medskip
Combining this computation with the proof for Theorem \ref{infinitely classes}, we obtain the following,
which is the main result of this paper:

\begin{theorem}
\mylabel{main result}
Let $Y$ be a proper scheme.
Suppose there is a proper birational morphism $X\ra Y$ and a
Cartier divisor $D\subset X$ that intersects the exceptional locus  
in a finite set, and that the proper scheme $D$ is projective. 
Then there are locally free sheaves $\shE$ of rank $n=\dim(Y)$ on $Y$ with Chern number $c_n(\shE)$ arbitrarily large.
\end{theorem}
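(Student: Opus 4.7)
The plan is to deduce Theorem \ref{main result} by combining Theorem \ref{infinitely classes}, which already produces infinitely many isomorphism classes of rank $n$ bundles on $Y$, with the Chern class computation of Proposition \ref{chern number}, which shows their top Chern numbers grow without bound. So the work amounts to matching up hypotheses and then reading off the leading coefficient.

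First I would arrange that the hypotheses of Theorem \ref{infinitely classes} hold. The condition $\O_Y = f_*\O_X$ can be arranged by passing to the Stein factorization $X \to Y' \to Y$, since $Y' \to Y$ is finite birational and our hypotheses on $D$ are preserved on $X \to Y'$; one then works with $Y'$ (noting that the exceptional locus and the Cartier divisor $D$ are unchanged by this replacement, as finite morphisms do not contract curves). Next, since $D \cap E$ is finite, the invertible sheaf $\O_X(D)$ is relatively semiample over $Y$ by the Zariski--Fujita theorem, as observed in the remark following Theorem \ref{infinitely classes}. Replacing $X$ by the relative homogeneous spectrum of $\bigoplus_{t\ge 0} f_*\O_X(tD)$, I may therefore assume that the exceptional locus $E \subset X$ is $1$-dimensional, hence that the critical locus $f(E) \subset Y$ is finite. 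The three conditions of Theorem \ref{infinitely classes} are then all satisfied.

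Applying Theorem \ref{infinitely classes} produces, for each sufficiently large integer $t$, a locally free sheaf $\shE'_{t,\psi}$ on $X$ of rank $n$ which becomes free on the chosen infinitesimal neighborhood of $E$; by the equivalence of categories in Corollary \ref{critical finite}, these descend to locally free sheaves $\shE_t$ on $Y$ of rank $n$. To finish, I would apply Proposition \ref{chern number} to the defining short exact sequences $0 \to \shE'^\vee_{t,\psi} \to \shF^\vee \to \shN_D(t-s) \to 0$ from the construction (with the ample $\shL_D = \O_D(1)$). Proposition \ref{chern number} then expresses $c_n(\shE'_{t,\psi})$ as a polynomial in $t$ of degree $n-1$ with leading coefficient $c_1^{n-1}(\O_D(1))$. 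Since $\O_D(1)$ is ample on the projective scheme $D$ and $D$ has dimension $n-1$ along its components meeting the generic part of $X$, this coefficient is a positive integer. Because $X \to Y$ is birational, the top-degree cohomology identifies and $c_n(\shE_t)$ computed on $Y$ equals $c_n(\shE'_{t,\psi})$ computed on $X$; in particular it goes to infinity with $t$.

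The main obstacle is bookkeeping in the reduction step: arranging $f_*\O_X = \O_Y$ cleanly when $Y$ is non-normal, and verifying that after the relative Proj construction the Cartier divisor $D$ on the new $X$ still satisfies $D \cap E$ finite and $D$ projective (for which one uses that the relative base locus lies inside the original $D \cap E$ and that the map from the new $X$ to $Y$ contracts a still 1-dimensional exceptional set). A secondary technicality is that Proposition \ref{chern number} is phrased for irreducible $X$; this is handled componentwise, since the top Chern number is additive over the top-dimensional components of $X$ and each top-dimensional component meeting $D$ contributes a nonnegative degree-$(n-1)$ polynomial whose leading term is strictly positive on at least one component (namely any containing an irreducible component of $D$ of dimension $n-1$).
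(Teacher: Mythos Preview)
Your approach is the paper's: its entire proof of Theorem \ref{main result} is the sentence ``Combining this computation with the proof for Theorem \ref{infinitely classes}'', i.e., feed the sheaves $\shE'_{t,\psi}$ from that construction into Proposition \ref{chern number} and read off that $c_n$ is a polynomial in $t$ with leading coefficient $c_1^{n-1}(\O_D(1))>0$. Your reduction of the critical locus to a finite set via Zariski--Fujita and relative $\Proj$ is exactly what the Remark following Theorem \ref{infinitely classes} records, and your handling of irreducibility componentwise is a reasonable way to invoke Proposition \ref{chern number}, which is stated only for irreducible $X$.

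One point deserves tightening. Your Stein factorization step replaces $Y$ by $Y'$ and then ``works with $Y'$'', but the theorem asserts the existence of locally free sheaves on $Y$ itself. For a finite birational morphism $g:Y'\to Y$ with $Y$ non-normal, the pushforward $g_*\shE$ of a locally free $\shE$ need not be locally free, so bundles on $Y'$ do not automatically give bundles on $Y$ of the same rank. The paper is equally silent on this reduction; its applications are to normal (toric) $Y$, where $\O_Y=f_*\O_X$ holds for free and the issue evaporates. If you want the argument to go through in the stated generality, you should either carry the hypothesis $\O_Y=f_*\O_X$ explicitly (as Theorem \ref{infinitely classes} does) or supply an argument that descends the bundles from $Y'$ to $Y$.
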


Over the field of complex numbers $k=\CC$, we
see that there are infinitely many algebraic vector bundles of rank $n=\dim(Y)$
that are non-isomorphic as topological vector bundles.
Of course, in this situation one could  use singular cohomology  $A^i(X)=H^{2i}(X^\an,\ZZ)$ 
of the associated complex space $X^\an$ rather then
$l$-adic cohomology $A^i(X)=H^{2i}(X,\QQ_l(i))$.

\section{Toric varieties}
\mylabel{Toric Varieties}

In this section we   study toric varieties and formulate a condition ensuring that a given toric prime divisor becomes
$\QQ$-Cartier on a small modification. If furthermore the toric divisor is projective,   we   conclude
with Theorem \ref{main result} that there are non-trivial vector bundles. It turns out that
this condition automatically holds in dimension $n=3$.
For general facts on toric varieties, we refer to \cite{Cox; Little; Schenck 2011} and
\cite {Kempf et al. 1973}.

Fix an infinite ground field $k$. As customary, we denote by $T = \mathbb{G}_m^n$ the standard torus, $M=\ZZ^{\oplus n}$
its character group and $N = \Hom_\ZZ(M, \ZZ)$ the dual group of $1$-parameter subgroups. Moreover,
  set $N_\RR = N \otimes_\ZZ \RR$ and $M_\RR = M \otimes_\ZZ \RR$.
Any \emph{strictly convex rational polyhedral cone} $\sigma \subset N_\RR $ is of the form
$\sum_{i = 1}^t \RR_{\geq 0} v_i$, where $v_1, \dots, v_t\in N\subset N_\RR$ are lattice vectors, such that
$\sigma$ does not contain non-trivial linear subspaces. Its dual cone is given by $\check{\sigma} = \{m \in M_\RR \mid
n(m) \geq 0$ for all $n \in \sigma\}$. The affine toric variety $U_\sigma$ associated to $\sigma$ is
the spectrum of the monoid ring $k[\check{\sigma} \cap M]$.

The \emph{linear span} of $\sigma$ is the linear subspace of $N_\RR$ generated by $\sigma$. 
Its dimension is also called the dimension of the cone $\sigma$.  Inside this linear
span, we can distinguish between the interior and the boundary of $\sigma$. We call the former the
{\em relative interior} of $\sigma$.
A {\em face} of $\sigma$ is either $\sigma$ itself or given by an intersection $\sigma \cap H$, where
$H\subset H_\RR$ is a hyperplane  disjoint from the relative interior of $\sigma$. A proper face (i.e.\
a face $\neq \sigma$) is again a strictly convex rational polyhedral cone  contained in $H$.
The set of faces of $\sigma$ is closed under intersection and partially ordered, where we write
$\tau \preceq \eta$ if and only if $\tau \subseteq \eta$. For any integer $l\geq 0$ we denote by $\sigma(l)$ the set of
$l$-dimensional faces of $\sigma$.
The most important faces   are  the {\em rays} $\rho\in\sigma(1)$ and the {\em facets}
$\eta\in\sigma(d - 1)$, where $d = \dim \sigma$.
Note that $\sigma = \sum_{\rho \in \sigma(1)} \rho$.

From now on, we assume that $\dim \sigma = n$. Given  a ray $\rho \in \sigma(1)$, we shall formulate certain conditions on
the pair $(\sigma, \rho)$. For this, we use the 
{\em beneath-and-beyond method} from convex geometry 
(see \cite{Edelsbrunner 1987}, \S 8.4). In its original form, it
deals with   polytopes rather than cones. However, we can always choose
an affine hyperplane $H$ which passes through the interior of $\sigma$ such that the cross-section $P =
\sigma \cap H$ is a compact polytope and $\sigma$ coincides with the $\RR_{\geq 0}$-linear span of $P$.
Moreover, there is a one-to-one correspondence between the non-zero faces of $\sigma$ and those of $P$,
where the former are the $\RR_{\geq 0}$-linear spans of the latter. This way, the corresponding terminology 
 in \cite{Edelsbrunner 1987} straightforwardly carries over to our setting.

Consider a facet $\eta = \sigma \cap H_\eta$.
Then the hyperplane $H_\eta$, which  is unique for facets, separates $N_\RR$ into two half spaces, one of which contains the interior of $\sigma$.
We say that $x\in N_\RR$ is {\em beneath} $H_\eta$ if it is contained in the same half space that
contains the relative interior of $\sigma$; if it is contained in the opposite half space, we call $x$ {\em beyond}
$H_\eta$.  
Now set 
$$
\sigma' = \sum_{\rho' \in \sigma(1) \smallsetminus \{\rho\}} \rho'.
$$
One can distinguish
two cases: First, $\dim \sigma' = n - 1$ and therefore $\sigma'\subset\sigma$ is a proper face. Second,
$\dim \sigma' = n$, i.e.\ $\sigma'$ is a cone contained in $\sigma$ and, in general, will share some of its
faces with $\sigma$. The beneath-and-beyond method is an inductive procedure to describe the faces of $\sigma$
in terms of  the faces of $\sigma'$ and the relative position of $\rho$ with respect to the hyperplanes of the facets of
$\sigma'$. Here, we are interested   in the following special case:

\begin{definition}
Let $\rho$, $\sigma' \subset \sigma$ be as before.
We say that $\sigma$ is a {\em pyramidal extension} of $\sigma'$ by $\rho$, if one of the following holds:
\begin{enumerate}
\item $\dim \sigma' = n - 1$.
\item $\dim \sigma' = n$ and there exists precisely one facet $\eta\subset\sigma'$ such that the relative
interior of $\rho$ is beyond $H_\eta$, and beneath every other facet of $\sigma'$.
\end{enumerate}
In the first case, we set $\sigma'' = \sigma = \sigma' + \rho$. In the second case we write $\sigma'' = \eta + \rho$.
\end{definition}

Note that in both cases, $\sigma''$ is an $n$-dimensional cone, which in \cite{Edelsbrunner 1987} is called 
\emph{pyramidal update}   of $\sigma'$ or $\eta$, respectively.

\begin{example}\label{pyramid}
Let $\tau$ be a $(n - 1)$-dimensional cone and $\kappa = \RR_{\geq 0} x$ for some $x \in N_\RR \smallsetminus H_\tau$.
Then $\sigma = \tau + \kappa$ is a pyramidal extension with $\sigma' = \tau$, $\sigma'' = \sigma$ and $\rho
= \kappa$.
\end{example}

\begin{example}\label{simplex}
Recall that an $n$-dimensional cone is called {\em simplicial} if it is generated by $n$ rays. In this case, for any
$\rho \in \sigma(1)$, the cone $\sigma'$ as above is $(n - 1)$-dimensional
and therefore $\sigma$ is a pyramidal extension of $\sigma'$ by $\rho$.
\end{example}

\begin{example}\label{threedimensionalcones}
Let $\sigma$ be a non-simplicial $3$-dimensional cone and $\rho$ be any element of $\sigma(1)$. Then
$\rho = \tau_1 \cap \tau_2$ for two facets $\tau_1, \tau_2 \in \sigma(2)$ and there are $\kappa_1, \kappa_2 \in
\sigma(1)$ such that $\tau_1 = \rho + \kappa_1$ and $\tau_2 = \rho + \kappa_2$. Then it is elementary to see
that $\sigma$ is a pyramidal extension of the $n$-dimensional cone $\sigma' = \sum_{\rho' \in \sigma(1) \smallsetminus\{\rho\}} 
\rho'$ with respect to the facet $\eta =
\kappa_1 + \kappa_2$. Together with Example \ref{simplex}, this shows  that every $3$-dimensional cone is
a pyramidal extension.
\end{example}

 We now paraphrase from  \cite{Edelsbrunner 1987}, Lemmas 8.6 and 8.7  
the classification of faces of $\sigma$ and $\sigma''$ in terms of those of $\sigma'$
and $\eta$ for   pyramidal extensions.

\begin{lemma}\label{pyramidalfaces}
Let $\rho, \sigma', \sigma'' \subset \sigma$ be as before.
\begin{enumerate} 
\item\label{pyramidalfacesi} If $\dim \sigma' = n - 1$, then every face of $\sigma'$ is a face of $\sigma''$.
Moreover, any cone of the
form $\tau + \rho$ with $\tau \preceq \sigma'$ is a face of $\sigma''$ and $\sigma''$ has no other faces.
\item\label{pyramidalfacesii}
If $\dim \sigma' = n$, then every face of $\eta$ is a face of $\sigma''$. Moreover, any cone of the
form $\tau + \rho$ with $\tau \preceq \eta$ is a face of $\sigma''$ and $\sigma''$ has no other faces.
\item\label{pyramidalfacesiii}
 If $\dim \sigma' = n$, then every proper face of $\sigma'$ is a face of $\sigma$ if it does not coincide with
$\eta$. For every proper face $\tau$ of $\eta$, the cone $\tau + \rho$ is a proper face of $\sigma$. There are no
other proper faces of $\sigma$.
\end{enumerate}
\end{lemma}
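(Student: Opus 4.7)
The plan is to translate everything to the polytope cross-section $P = \sigma \cap H$ introduced earlier in the section; faces of cones correspond bijectively to faces of the cross-section polytopes, and the beneath/beyond terminology of \cite{Edelsbrunner 1987} transfers directly. In that picture $\sigma'$ becomes a subpolytope $P'$, the ray $\rho$ becomes a single vertex $v$, and $\eta$ becomes a facet of $P'$; each item of the lemma then asks for the face lattice of the polytope obtained by inserting $v$ into $P'$ under the pyramidal-update hypothesis.

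For parts (\ref{pyramidalfacesi}) and (\ref{pyramidalfacesii}) I would invoke the classical description of faces of a pyramid. In (\ref{pyramidalfacesi}), the polytope corresponding to $\sigma'' = \sigma' + \rho$ is literally a pyramid with base $P'$ and apex $v$, so every face is either a face of $P'$ or of the form $\mathrm{conv}(\tau, v)$ for some face $\tau$ of $P'$. The easy direction exhibits a supporting hyperplane for each such subset; the converse observes that any supporting hyperplane of the pyramid either contains $v$, cutting out a face of the base, or does not, cutting out a pyramid over a face of the base. In (\ref{pyramidalfacesii}), $\sigma'' = \eta + \rho$ is again a pyramid, this time with base $\eta$, so the same argument applies verbatim with $\eta$ replacing $\sigma'$.

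Part (\ref{pyramidalfacesiii}) is the substantive case. I would analyse the facet-defining hyperplanes of $\sigma'$ one by one. By the pyramidal-extension hypothesis, $\rho$ lies beneath $H_\tau$ for every facet $\tau$ of $\sigma'$ other than $\eta$, and beyond $H_\eta$. Consequently $H_\tau$ still supports the enlarged cone $\sigma = \sigma' + \rho$, so $\tau$ and each of its subfaces remain faces of $\sigma$, while $H_\eta$ now cuts through the interior of $\sigma$ and $\eta$ ceases to be a face. The hyperplane $H_\eta$ decomposes $\sigma$ set-theoretically as $\sigma = \sigma' \cup \sigma''$ along $\sigma' \cap \sigma'' = \eta$, so any supporting hyperplane of $\sigma$ restricts to a supporting hyperplane of each piece; pulling back the classification from part (\ref{pyramidalfacesii}) shows that the additional proper faces of $\sigma$ are precisely the cones $\tau + \rho$ with $\tau$ a proper face of $\eta$.

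The main obstacle I expect is the completeness half of (\ref{pyramidalfacesiii}): ruling out ``hidden'' proper faces of $\sigma$ that are neither inherited from $\sigma'$ nor pyramids over proper faces of $\eta$. The subtlety is that a supporting hyperplane of $\sigma$ need not restrict to a facet-defining hyperplane of either $\sigma'$ or $\sigma''$, so one cannot immediately invoke parts (\ref{pyramidalfacesi}) or (\ref{pyramidalfacesii}); what is required is the decomposition $\sigma = \sigma' \cup \sigma''$ along $H_\eta$, together with the observation that a support of the whole cone automatically restricts to a support of each half, after which the enumeration matches the three items as stated.
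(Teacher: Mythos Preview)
The paper does not supply a proof of this lemma at all: the sentence preceding the statement says ``We now paraphrase from \cite{Edelsbrunner 1987}, Lemmas 8.6 and 8.7,'' and the text moves on immediately afterward. So your sketch is being compared against a bare citation, and the relevant question is whether you have reconstructed the beneath-and-beyond argument from that reference. You have: passing to the cross-section polytope $P = \sigma \cap H$, treating (\ref{pyramidalfacesi}) and (\ref{pyramidalfacesii}) as the standard face lattice of a pyramid, and handling (\ref{pyramidalfacesiii}) by analysing which facet hyperplanes of $\sigma'$ survive when $\rho$ is adjoined, together with the decomposition $\sigma = \sigma' \cup \sigma''$ along $H_\eta$, is precisely the strategy of Edelsbrunner's Lemmas 8.6 and 8.7. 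This is also the translation the paper explicitly sets up in the paragraph before the lemma.

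Two small points. First, in your pyramid argument for (\ref{pyramidalfacesi}) the two cases are interchanged: a supporting hyperplane that \emph{contains} the apex $v$ cuts out a face containing $v$, hence a pyramid $\operatorname{conv}(\tau,v)$ over a face $\tau$ of the base, while a supporting hyperplane \emph{not} containing $v$ cuts out a face of the base itself. Second, your completeness argument in (\ref{pyramidalfacesiii}) is correct in outline but glosses over one step: once you know a proper face $F$ of $\sigma$ containing $\rho$ has $F = (F \cap \sigma') + \rho$ with $F \cap \sigma'$ a face of $\sigma'$, you still need $F \cap \sigma' \preceq \eta$; this comes from observing that $F \cap \sigma''$ is a face of $\sigma''$ containing $\rho$, hence by (\ref{pyramidalfacesii}) equals $\tau' + \rho$ with $\tau' \preceq \eta$, and then convexity of $F$ forces $F \cap \sigma' = \tau'$. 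You flag this as the main obstacle, and the fix is exactly the restriction-to-each-half idea you already describe.
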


Note that the characterization (i) will be convenient later on though it is somewhat redundant,
because $\sigma'' = \sigma$ holds.
Recall that a collection $\Delta$ of cones is  called a {\em fan} if it is closed
under taking intersections and faces.

\begin{proposition}\label{subdivisionfan}
If $\dim \sigma' = n$, then $\sigma' \cap \sigma'' = \eta$, and $\sigma' \cup \sigma'' = \sigma$, and the faces of
$\sigma'$ and $\sigma''$ form a fan.
\end{proposition}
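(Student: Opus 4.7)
The plan is to fix a linear form $u \in M_\RR$ that supports the facet $\eta$ of $\sigma'$, normalized so that $u \geq 0$ on $\sigma'$ (with $\eta = \sigma' \cap \{u = 0\}$), and to let $v$ be a generator of $\rho$. Since $\rho$ is beyond $H_\eta$, we have $u(v) < 0$, while for every other facet $\eta'$ of $\sigma'$ with normal $u'$ (scaled so $u' \geq 0$ on $\sigma'$), the hypothesis that $\rho$ is beneath $H_{\eta'}$ gives $u'(v) \geq 0$. With this setup, all three assertions will follow from elementary case analysis on the sign of $u(p)$.

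For $\sigma = \sigma' \cup \sigma''$: writing an arbitrary $p \in \sigma = \sigma' + \rho$ as $p = y + tv$ with $y \in \sigma'$ and $t \geq 0$, I would consider two cases. If $u(p) \geq 0$, then for each facet $\eta'$ of $\sigma'$ with normal $u'$ one has $u'(p) = u'(y) + t\,u'(v) \geq 0$, and $u(p) \geq 0$ by assumption, so $p$ satisfies all facet-defining inequalities of $\sigma'$ and hence lies in $\sigma'$. If $u(p) \leq 0$, set $s = t - u(y)/|u(v)|$, which is nonnegative precisely because $u(p) \leq 0$, and put $z = p - sv = y + (t - s)v$; then $u(z) = 0$ by construction, while the same argument using $u'(v) \geq 0$ shows that $z$ satisfies every other facet inequality of $\sigma'$, so $z \in \sigma' \cap \{u = 0\} = \eta$ and $p = z + sv \in \eta + \rho = \sigma''$.

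For $\sigma' \cap \sigma'' = \eta$: the inclusion $\eta \subseteq \sigma' \cap \sigma''$ is immediate from the definitions. Conversely, if $p$ lies in both, then $u(p) \geq 0$ (from $\sigma'$) and $u(p) = u(z) + s\,u(v) = s\,u(v) \leq 0$ (writing $p = z + sv$ with $z \in \eta$, $s \geq 0$, using $u(z)=0$ and $u(v)<0$); so $u(p) = 0$, forcing $s = 0$ and $p = z \in \eta$.

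For the fan property, the main obstacle is checking intersections of a face $\tau \preceq \sigma'$ with a face $\mu \preceq \sigma''$. Since $\tau \cap \mu \subseteq \sigma' \cap \sigma'' = \eta$, I would rewrite $\tau \cap \mu = (\tau \cap \eta) \cap (\mu \cap \eta)$. Now $\tau \cap \eta$ is a face of $\sigma'$ contained in $\eta$ and therefore a face of $\eta$; similarly $\mu \cap \eta$ is a face of $\eta$; their intersection is therefore a face of $\eta$, which in turn is a face of both $\sigma'$ and $\sigma''$. The standard fact that faces of a cone contained in a given face are faces of that face then shows $\tau \cap \mu$ is a face of both $\tau$ and $\mu$, establishing that the collection is closed under intersection. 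Closure under taking faces is automatic since faces of $\sigma'$ (resp.\ $\sigma''$) are closed under taking faces.
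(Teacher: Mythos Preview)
Your argument is correct. The paper does not supply its own proof of this proposition; it states the result immediately after Lemma~\ref{pyramidalfaces}, which is explicitly paraphrased from Edelsbrunner's Lemmas~8.6 and~8.7, and treats the proposition as a direct consequence of that face classification together with the beneath-and-beyond machinery. So the paper's ``proof'' is essentially a deferral to the reference.

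Your approach is genuinely different in style: rather than invoking the combinatorial face classification from Edelsbrunner, you work directly with the defining linear functional $u$ of the distinguished facet $\eta$ and argue by sign analysis on $u(p)$. This is more elementary and entirely self-contained. The key step---in the case $u(p)\le 0$, subtracting the correct multiple of $v$ to land exactly on $\eta$---is clean, and your verification of the fan property via $\tau\cap\mu \subseteq \sigma'\cap\sigma'' = \eta$ is the right reduction. One small point worth making explicit: when you check that $\mu\cap\eta$ is a face of $\eta$, you are implicitly using that $\eta$ itself is a face of $\sigma''$, which is part of Lemma~\ref{pyramidalfaces}(ii) (or can be seen directly since $u\le 0$ on $\sigma''=\eta+\rho$ and $\eta = \sigma''\cap\{u=0\}$). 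Either way, the argument goes through.
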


To any toric variety $Y$ there is associated   a fan $\Delta$ that encodes the orbit structure of the $T$-action
on $Y$. More precisely, every $\sigma \in \Delta$ corresponds to an open affine subset $U_\sigma =
\Spec K[\check{\sigma} \cap M]$ which, in turn, contains a unique minimal orbit $\orb(\sigma)$. This correspondence is
compatible with inclusions, i.e.\ $\tau \preceq \sigma$ if and only if $U_\tau \subseteq U_\sigma$ if and only if $\orb(\sigma)$
is contained in the closure of $\orb(\tau)$ in $Y$. An orbit closure $V(\sigma) = \orb(\sigma)$
by itself is again a toric variety with respect to the action of the torus $T / T_\sigma$, where $T_\sigma \subset T$
denotes the stabilizer subgroup of $\orb(\sigma)$. Its orbit structure can be described with help of the {\em star}
$\Star(\sigma) = \{\tau \in \Delta \mid \sigma \preceq \tau\}$. All $\tau
\in \Star(\sigma)$ have $\sigma$ as a common face and the sets $\bar{\tau} = (\tau + \langle \sigma \rangle_\RR) /
 \langle \sigma \rangle_\RR$ form a fan $\bar{\Delta}_\sigma$ in $N_\RR / \langle \sigma \rangle_\RR$ which encodes
the toric structure of $V(\sigma)$.

Again, we are only interested in two special situations. The first is the case where $\tau$ is $(n - 1)$-dimensional.
Then $\dim V(\tau) = 1$ and there are only three possibilities for what $V(\tau)$ can look like, depending on whether
$\tau$ is contained in none, one, or two $n$-dimensional cones which correspond to $V(\tau) \simeq \mathbb{A}^1
\smallsetminus \{0\}$, $V(\tau) \simeq \mathbb{A}^1$, and $V(\tau) \simeq \PP^1$, respectively.

The second case is where
$\rho$ is a ray in $\Delta$ such that $V(\rho) = D_\rho$ is a {\em toric Weil divisor}.
In what follows, we write $\Delta(1)$ for the collection of rays in the fan $\Delta$, as customary.

\begin{lemma}\label{pyramidalcartier}
Let $\rho \in \Delta(1)$ be such that for every $n$-dimensional cone $\sigma \in \Star(\rho)$, we have
$\dim \sigma' = n - 1$. Then $D_\rho \subset Y$ is $\QQ$-Cartier.
\end{lemma}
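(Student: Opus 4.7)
The plan is to appeal to the standard toric criterion that the Weil divisor $D_\rho$ is $\QQ$-Cartier if and only if for every maximal cone $\sigma\in\Delta$ there exists a rational linear form $m_\sigma\in M_\QQ$ with $\langle m_\sigma, v_{\rho'}\rangle = 1$ for $\rho'=\rho$ and $\langle m_\sigma,v_{\rho'}\rangle = 0$ for all other $\rho'\in\sigma(1)$, where $v_{\rho'}$ denotes the primitive lattice generator of the ray. Equivalently, the piecewise linear support function $\psi_\rho$ on $|\Delta|$ taking value $1$ at $v_\rho$ and $0$ at every other ray generator has to restrict to a rational linear form on each cone of $\Delta$.

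Checking this cone by cone, for $\sigma\notin\Star(\rho)$ one may simply take $m_\sigma=0$, which vanishes on all rays of $\sigma$, as required. The content of the lemma is therefore concentrated in the case that $\sigma$ is an $n$-dimensional cone containing $\rho$. Here I would argue as follows. The hypothesis $\dim\sigma'=n-1$ implies that the linear span $H=\langle\sigma'\rangle_\RR\subset N_\RR$ is a rational hyperplane; since $\dim\sigma=n$, the generator $v_\rho$ does not lie in $H$, for otherwise $\sigma=\sigma'+\rho$ would be contained in $H$. Consequently there is a unique linear form on $N_\RR$ that vanishes on $H$ and takes the value $1$ at $v_\rho$, and since $H$ and $v_\rho$ are defined over $\QQ$, this form lies in $M_\QQ$. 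Taking this to be $m_\sigma$ gives by construction the required values on every ray of $\sigma$.

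The local data $\{m_\sigma\}$ automatically glue: on the intersection $\sigma_1\cap\sigma_2$ of two maximal cones, the forms $m_{\sigma_1}$ and $m_{\sigma_2}$ take identical values on the common rays, hence agree on the linear span of $\sigma_1\cap\sigma_2$, so $m_{\sigma_1}-m_{\sigma_2}\in(\sigma_1\cap\sigma_2)^\perp\cap M_\QQ$, which is exactly the compatibility needed for a piecewise linear function on $|\Delta|$. Finally, clearing a common denominator for the finitely many $m_\sigma\in M_\QQ$ yields an integer $k\geq 1$ with $km_\sigma\in M$ for every maximal $\sigma$, so that $kD_\rho$ is Cartier and $D_\rho$ is $\QQ$-Cartier. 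The only substantive step is the construction of $m_\sigma$ in the previous paragraph, which is essentially immediate from the dimension hypothesis, so no serious obstacle is anticipated.
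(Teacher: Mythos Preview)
Your proof is correct and follows essentially the same route as the paper's: both invoke the standard toric Cartier criterion, set $m_\sigma=0$ on maximal cones not containing $\rho$, and for $n$-dimensional $\sigma\in\Star(\rho)$ use the hypothesis $\dim\sigma'=n-1$ to produce a rational linear form vanishing on the hyperplane spanned by $\sigma'$ and normalized at $v_\rho$, then clear denominators. Your gluing paragraph is correct but superfluous, since the Cartier criterion only asks for the existence of an $m_\sigma$ on each maximal cone, and compatibility on overlaps is automatic from the fact that all $m_\sigma$ are chosen to match the same prescribed values on the rays.
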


\begin{proof}
We have to find an integer $c\neq 0$ so that $cD_\rho\subset Y$ is Cartier.
Any toric Weil divisor of the form $D = \sum_{\kappa \in \Delta(1)} a_\kappa D_\kappa$ with $a_\kappa \in \ZZ$,
and $D$ is Cartier if and only if for every maximal cone $\sigma \in \Delta$ there exists $m_\sigma \in M$ such that
$m(l_\kappa) = -a_\kappa$ for every $\kappa \in \sigma(1)$, where $l_\kappa \in \N$ denotes the \emph{primitive generator}
of $\kappa$. For our divisor $D_\rho$, we have $a_\kappa = 0$ for $\kappa \neq \rho$ and $a_\rho = 1$. Therefore,
for any $\sigma \in \Delta$ which does not contain $\rho$, we can choose $m_\sigma = 0$. If $\rho \in \sigma(1)$
then, because the rays of $\sigma'$ lie in a hyperplane in $M_\RR$, we can choose $m'_\sigma \in M_\RR$ so that
$m'_\sigma(l_\kappa) = 0$ for every $\kappa \in \sigma'(1)$ and $m'_\sigma(l_\rho) = 1$. Then for a suitable multiple
$c$, we have $m_\sigma = c \cdot m'_\sigma \in M$ for all $\sigma \in \Star(\rho)$ and hence the $m_\sigma$ describe
the Cartier divisor $c D_\rho$.
\end{proof}

Under the assumptions of Proposition \ref{subdivisionfan}, the cones $\sigma'$ and $\sigma''$ generate a fan
which is supported on
$\sigma$. It arises from the fan generated by $\sigma$ by removing the cone $\sigma$ and
introducing three new cones  $\sigma',\sigma'',\sigma' \cap \sigma''$. 
This implies that $\sigma' \cup \sigma''$ extends to a refinement $\Delta'$ of the
fan $\Delta$ such that the associated toric morphism $Y' \rightarrow Y$ corresponds to the contraction of a
toric subvariety $V(\eta) \simeq \mathbb{P}^1$. Moreover, if $D_\rho'$ denotes the strict transform of $D_\rho$
on $X$, then $D'_\rho \cap V(\eta)$ consists of precisely one point.

\begin{definition}
We say that $\rho \in \Delta(1)$ is in {\em Egyptian position} if every $n$-dimensional cone $\sigma \in
\Star(\rho)$ is a pyramidal extension of $\sigma'$ by $\rho$.
\end{definition}

If $\Delta$ contains a ray $\rho$ in Egyptian position, then for every $\sigma \in \Star(\rho)$ with $\dim \sigma'
= n$, we can consider the modification of $X \rightarrow Y$ corresponding to inserting the extra facets $\sigma'
\cap \sigma''$ for every such $\sigma$. By our discussion above, the exceptional locus $E\subset X$  
if a disjoint union of copies of  $\mathbb{P}^1$, such that $\dim E \cap D_\rho' = 0$. Summing up:

\begin{proposition}\label{Egyptiandivisor}
Let $Y$ be an $n$-dimensional toric variety associated to a fan $\Delta$ in $N$ and $\rho \in \Delta(1)$ in
Egyptian position. For the corresponding toric modification $f : X \ra Y$ with exceptional set $E\subset X$,
denote $D'_\rho \subset X$ the strict transform of the toric prime divisor $D_\rho \subset Y$ associated to
$\rho$. Then $E$ is a curve, $D_\rho' \cap E$ is finite, and the induced morphism $D'_\rho \ra D_\rho$ is an
isomorphism.
\end{proposition}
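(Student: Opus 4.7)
The plan is to work with the explicit fan refinement $\Delta'$ underlying the toric modification $f : X = Y_{\Delta'} \to Y = Y_\Delta$ and read off each of the three assertions from the standard toric dictionary. For every $\sigma \in \Star(\rho)$ with $\dim \sigma' = n-1$ we leave $\sigma$ unchanged, while for every $\sigma \in \Star(\rho)$ with $\dim \sigma' = n$ we replace $\sigma$ by the two $n$-cones $\sigma'$ and $\sigma'' = \eta + \rho$ meeting along the common facet $\eta$, as permitted by Proposition \ref{subdivisionfan}; among $\sigma'$ and $\sigma''$ only $\sigma''$ contains $\rho$.

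To identify $E$: each new $\eta$ is a common facet of exactly the two $n$-cones $\sigma',\sigma'' \in \Delta'$, so $V(\eta) \simeq \PP^1$, and $f(V(\eta)) = \orb(\sigma)$ is a single point; on the complement of these $V(\eta)$'s the morphism $f$ is an isomorphism of orbits. Hence $E$ is the pairwise disjoint union of these finitely many $\PP^1$'s, in particular a curve. For each such $\eta$, the toric intersection formula gives $V(\rho) \cap V(\eta) = V(\gamma)$, with $\gamma$ the smallest cone of $\Delta'$ having both $\rho$ and $\eta$ as faces; here $\gamma = \rho + \eta = \sigma''$ is an $n$-cone, so $V(\sigma'')$ is a single torus-fixed point. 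Thus $D'_\rho$ meets each component of $E$ in exactly one point, giving finiteness of $D'_\rho \cap E$.

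The morphism $D'_\rho \to D_\rho$ is the toric morphism of $(n-1)$-dimensional toric varieties induced by the identity on $N/(\ZZ\rho_1)$, where $\rho_1$ denotes a primitive generator of $\rho$. It therefore suffices to check that the star fans $\bar\Delta_\rho$ and $\bar\Delta'_\rho$ in $N_\RR/\langle\rho\rangle_\RR$ coincide. Unchanged cones $\sigma$ contribute matching $\bar\sigma$ on both sides. For a refined $\sigma$, the cone in $\bar\Delta_\rho$ is $\bar\sigma = \bar{\sigma'}$ because $\sigma = \sigma' + \rho$, while the cone in $\bar\Delta'_\rho$ is $\bar{\sigma''} = \bar\eta$ because $\sigma'' = \eta + \rho$. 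The entire argument thus reduces to the identity $\bar{\sigma'} = \bar\eta$ in $N_\RR/\langle\rho\rangle_\RR$, and this is the only place where the pyramidal hypothesis is needed; I expect it to be the main obstacle.

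To establish $\bar{\sigma'} = \bar\eta$, the inclusion $\bar\eta \subseteq \bar{\sigma'}$ is immediate from $\eta \subseteq \sigma'$. For the reverse inclusion, given $v \in \sigma'$ I consider the affine line $v + \RR\rho_1$. Since $\sigma'$ is strictly convex and $\pm\rho_1 \notin \sigma'$, this line meets $\sigma'$ in a bounded segment; at its endpoint in the $+\rho_1$ direction the line exits $\sigma'$ through some facet $\eta^{\ast}$, and the direction of exit being $+\rho_1$ forces $\rho_1$ to be beyond $H_{\eta^{\ast}}$. By the pyramidal hypothesis there is precisely one such facet, namely $\eta$, so the exit point lies in $\sigma' \cap H_\eta = \eta$ and projects to $\bar v \in \bar\eta$. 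This completes the equality of the star fans, and hence the isomorphism $D'_\rho \to D_\rho$.
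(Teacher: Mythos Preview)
Your argument is correct and follows the same construction as the paper: the proposition there is stated as a summary of the preceding discussion, which explicitly covers that $E$ is a disjoint union of $\PP^1$'s and that each $V(\eta)$ meets $D'_\rho$ in a single point, but leaves the isomorphism $D'_\rho \to D_\rho$ essentially unargued. Your verification of the equality $\bar\Delta_\rho = \bar\Delta'_\rho$ via $\bar\sigma = \bar{\sigma'} = \bar\eta = \bar{\sigma''}$ fills this in; the exit-point argument is slightly informal (the exit could a priori land on a lower-dimensional face), but it is easily made rigorous by working directly with the facet linear form $m_\eta$: since $m_\eta(\rho_1) < 0$ and $m_{\eta'}(\rho_1) > 0$ for every other facet $\eta'$, the unique $t_0 \geq 0$ with $m_\eta(v + t_0\rho_1) = 0$ automatically satisfies all remaining facet inequalities, placing $v + t_0\rho_1 \in \eta$.
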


In general,   multiples of a divisor that is quasiprojective are not necessarily quasiprojective.
The following shows that this problem does not occur for toric prime divisors.

\begin{proposition}\label{multiples}
Let $Y$ be an $n$-dimensional toric variety with fan $\Delta$ and $D = D_\rho$ a  toric prime divisor for
some $\rho \in \Delta(1)$. If the scheme $D$ is quasiprojective, the
same holds for    $cD$  for all integers $c > 0$.
\end{proposition}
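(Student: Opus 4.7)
The plan is to reduce the claim to lifting an ample line bundle from $D$ to $cD$, then to construct the lift explicitly via toric combinatorics. First, since $cD$ is defined by the ideal $\shI_D^c \subset \O_Y$, and $\shI_D$ and $\shI_D^c$ have the same radical, the subschemes $cD$ and $D$ share the same underlying topological space. By a standard consequence of Serre's cohomological criterion (in the spirit of \cite{EGA IIIa}, Proposition 2.6.2), a line bundle on a noetherian scheme is ample if and only if its restriction to a closed subscheme with the same underlying space is ample; this is proved by d\'evissage along the filtration of $\O_{cD}$ by powers of the nilpotent ideal $\shI_D/\shI_D^c$, each successive quotient being coherent on $D$. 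It therefore suffices to produce a line bundle on $cD$ whose restriction to $D$ is ample.

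To construct such a line bundle, I would exploit the fact that $D = D_\rho$ is itself a normal toric variety, for the quotient torus $T/T_\rho$ with fan $\bar\Delta_\rho$ in $N_\RR/\langle\rho\rangle_\RR$. Since every line bundle on a normal toric variety admits a $(T/T_\rho)$-equivariant structure, the assumed quasiprojectivity of $D$ provides an ample toric line bundle, corresponding to a strictly convex piecewise linear function $\psi$ on $\bar\Delta_\rho$. Lift $\psi$ to a piecewise linear function $\tilde\psi$ on $|\Star(\rho)|$ by pulling back along the linear projection $|\Star(\rho)| \to |\Star(\rho)|/\langle\rho\rangle_\RR \cong |\bar\Delta_\rho|$. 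On each cone $\sigma \in \Star(\rho)$ this $\tilde\psi$ is the unique linear extension of $\psi|_{\bar\sigma}$ satisfying $\tilde\psi(l_\rho) = 0$, and compatibility on overlaps is automatic from the construction via a global projection.

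The function $\tilde\psi$ then defines a toric Cartier divisor $\tilde E = -\sum_\kappa \tilde\psi(l_\kappa) D_\kappa$ on the open toric subvariety $U = \bigcup_{\sigma \in \Star(\rho)} U_\sigma$, which contains $D$ and therefore also $cD$. Because $\tilde\psi(l_\rho) = 0$, the component $D_\rho$ does not appear in $\tilde E$, so by the standard functoriality of toric Cartier divisors under restriction to the orbit closure $V(\rho) = D$, the line bundle $\O_U(\tilde E)|_D$ coincides with the ample toric line bundle associated to $\psi$. Applying the equivalence from the first paragraph to $\O_U(\tilde E)|_{cD}$ then yields an ample line bundle on $cD$, so $cD$ is quasiprojective. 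The only delicate point is the existence of an ample toric line bundle on the (possibly non-proper) normal toric variety $D$, which follows because $\Pic(D)$ is generated by classes of $(T/T_\rho)$-invariant Cartier divisors.
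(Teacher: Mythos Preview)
Your proof is correct. Both your argument and the paper's rest on the open toric neighborhood $U = \bigcup_{\sigma \in \Star(\rho)} U_\sigma$ and the linear projection $N_\RR \to N_\RR/\langle\rho\rangle_\RR$, but they exploit them differently. The paper observes that this projection induces an \emph{affine} toric morphism $\pi: U \to D_\rho$ (because $\pi^{-1}(U_{\bar\sigma}) = U_\sigma$ for every maximal cone), and then invokes \cite{EGA II}, Propositions 5.1.6 and 4.6.13, to conclude that $U$ itself admits an ample invertible sheaf; the closed subscheme $cD \subset U$ is then automatically quasiprojective. You instead construct $\pi^*L$ for an ample toric line bundle $L$ on $D$ explicitly, by pulling back its support function, and rather than showing this is ample on all of $U$ you restrict to $cD$ and invoke the d\'evissage along the nilpotent thickening $D \subset cD$. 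The paper's route is shorter and proves the stronger statement that $U$ is quasiprojective (which it records in the remark immediately following); your route is more hands-on and avoids the EGA machinery, trading it for the thickening argument. Since the line bundle $\O_U(\tilde E)$ you build is nothing other than $\pi^*L$, the two proofs are really two packagings of the same underlying geometry.
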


\begin{proof}
The divisor $D\subset Y$ is contained in the open subset $U = \bigcup_{\sigma \in \Star(\rho)} U_\sigma$. It is enough
to show that $U$ admits an ample invertible sheaf. By construction, there is a toric morphism
$\pi: U \ra D_\rho$ which is
induced by a map of fans $\pi': \Delta_\rho \ra \bar{\Delta}_\rho$, where $\Delta_\rho$ denotes the fan
generated by $\Star(\rho)$ and $\bar{\Delta}_\rho$ is the fan describing
$D_\rho$ as a toric variety. The map $\pi'$ is induced by the projection
$N_\RR \ra N_\RR / \langle\rho\rangle_\RR $. In particular, there is a one-to-one correspondence between maximal cones in $\Delta_\rho$ and
maximal cones in $\bar{\Delta}_\rho$, given by $\sigma \mapsto \pi'(\sigma) = \bar{\sigma}$.
Consequently, for every maximal cone $\bar{\sigma}$ and the corresponding open toric variety $U_{\bar{\sigma}}$,
we have $\pi^{-1}(U_{\bar{\sigma}}) = U_\sigma$. Hence the morphism
$\pi$ is affine. Therefore, by \cite{EGA II}, Proposition 5.1.6, the structure sheaf $\mathcal{O}_U$ is $\pi$-ample
and with \cite{EGA II}, Proposition 4.6.13 we conclude that $U$ admits an ample invertible sheaf.
\end{proof}

\begin{remark}
The fact that $U$ as   in the preceding proof  is quasiprojective if and only if
$D_\rho$ is, has a   nice interpretation in terms of the toric combinatorics. Recall that a very ample toric divisor
$D = \sum_{\xi \in \Delta_\rho(1)} c_\xi D_\xi$ on $U$ corresponds to an integral polyhedron
$P_D = \{m \in M \mid l_\xi(m) \geq -c_\xi\}$ whose face lattice is dual to that of $\Delta_\rho$.
The restriction of $\mathcal{O}(D)$ to $D_\rho$ corresponds to a divisor $D'$ on $D_\rho$ with associated polyhedron
$P_{D'} \subset \rho^\bot \cap M$. Up to rational equivalence, we can always assume that $c_\rho = 0$ and then
we can identify in a natural way $P_{D'}$ with $\rho^\bot \cap P_D$, which is the face of $P_D$ orthogonal to $\rho$.

Conversely, every ray $\bar{\tau}$ in $\bar{\Delta}_\rho$ is the image of a two-dimensional cone $\tau$ in
$\Star(\rho)$, which in turn is generated by $\rho$ and another ray $\xi \in \Delta_\rho(1)$. However, the
generator of $\xi$ might not map to a generator of $\bar{\tau}$. So, if $D' = \sum_{\tau \in \bar{\Delta}(1)}
c_{\bar{\tau}} D_{\bar{\tau}}$ is an ample toric divisor on $D_\rho$ with associated polyhedron $P_{D'} \subset
\rho^\bot \cap M$, the naturally defined polyhedron $P_D$ with $P_{D'} = P_D \cap \rho^\bot$ might only represent
an ample $\QQ$-Cartier divisor which can be made integral by passing form $D'$ to an appropriate multiple.
\end{remark}

We now come to the main result of this section:

\begin{theorem}\label{manybundles}
Let $Y$ be  a proper toric variety  with associated fan $\Delta$. Suppose there is a ray $\rho \in \Delta(1)$ in
Egyptian position such that the corresponding toric prime divisor $D_\rho$ is projective. Then there are locally
free sheaves $\shE$ on $Y$ of rank $n=\dim(Y)$ with Chern number $c_n(\shE)$ arbitrarily large.
\end{theorem}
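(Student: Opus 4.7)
The plan is to feed the data into Theorem \ref{main result}: it suffices to produce a proper birational modification $f \colon X \to Y$ together with an effective Cartier divisor $D \subset X$ that meets the exceptional locus in a finite set and is projective as a scheme. My candidate for the morphism is the toric modification provided by Proposition \ref{Egyptiandivisor}, and the candidate for $D$ is a positive multiple of the strict transform $D'_\rho$.

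Proposition \ref{Egyptiandivisor} already supplies most of what I need: its toric modification $f \colon X \to Y$ has exceptional locus $E$ a disjoint union of copies of $\PP^1$, the intersection $D'_\rho \cap E$ is finite, and the restriction $D'_\rho \to D_\rho$ is an isomorphism, so by hypothesis $D'_\rho$ is projective. To promote $D'_\rho$ to a Cartier divisor I would appeal to Lemma \ref{pyramidalcartier} on the refined fan $\Delta'$ describing $X$. The maximal cones of $\Delta'$ containing $\rho$ come in two flavours: those $\sigma \in \Star(\rho) \subset \Delta$ with $\dim \sigma' = n-1$ (left unchanged by the subdivision), and the new cones of the form $\sigma'' = \eta + \rho$ introduced when refining a $\sigma \in \Star(\rho)$ with $\dim \sigma' = n$. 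In both situations the cone spanned by the remaining rays has dimension $n-1$: in the first because that is the hypothesis, in the second because $\eta$ is a facet of $\sigma'$. Hence Lemma \ref{pyramidalcartier} applies on $X$, and $D'_\rho$ is $\QQ$-Cartier there; choose $c \geq 1$ so that $cD'_\rho$ is Cartier.

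Next, Proposition \ref{multiples}, applied to the toric variety $X$ and to the toric prime divisor $D'_\rho$ on it, shows that $cD'_\rho$ is quasiprojective. Being a closed subscheme of the proper scheme $X$, it is proper, hence projective. Its support equals that of $D'_\rho$, so $cD'_\rho \cap E$ is still finite. Plugging $(f, cD'_\rho)$ into Theorem \ref{main result} produces locally free sheaves $\shE$ of rank $n = \dim(Y)$ on $Y$ with $c_n(\shE)$ arbitrarily large.

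The only genuine obstacle is combinatorial bookkeeping: one has to verify that the subdivision dictated by the Egyptian position hypothesis simultaneously isolates $D'_\rho$ from $E$ outside a finite set and turns every maximal cone of $\Delta'$ through $\rho$ into a pyramid over its opposite facet, so that Lemma \ref{pyramidalcartier} delivers the needed $\QQ$-Cartier property. All the geometric and sheaf-theoretic content is already absorbed into Theorem \ref{main result} and Proposition \ref{multiples}.
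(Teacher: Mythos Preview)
Your proof is correct and follows the same route as the paper's: invoke Proposition \ref{Egyptiandivisor} for the modification, Lemma \ref{pyramidalcartier} on the refined fan for $\QQ$-Cartierness, Proposition \ref{multiples} for projectivity of the multiple, and then Theorem \ref{main result}. The paper's own argument is terser and leaves implicit the combinatorial check you spell out---namely that in $\Delta'$ every maximal cone through $\rho$ is a pyramid over an $(n-1)$-dimensional face---so your added justification is a welcome clarification rather than a departure.
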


\begin{proof}
By Proposition \ref{Egyptiandivisor}, we can subdivide $\Delta$ so that the corresponding modification
$X \rightarrow Y$ has $1$-dimensional exceptional set, whose intersection  with the strict transform of $D_\rho$ on
$X$ is zero-dimensional. By Lemma \ref{pyramidalcartier}, $D_\rho$ is $\QQ$-Cartier and therefore there exists
a multiple $c > 0$ such that $c D_\rho$ is Cartier. Moreover,   $c D_\rho$ remains projective by
Proposition \ref{multiples}.
Hence, we can apply Theorem \ref{main result}, which proves the assertion.
\end{proof}

\begin{corollary}\label{manybundles3d}
On every $3$-dimensional proper toric variety $Y$ there are  locally free sheaves $\shE$ of rank $3$ with arbitrarily large 
Chern number $c_3(\shE)$.
\end{corollary}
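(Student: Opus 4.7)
The plan is to reduce the corollary to a direct application of Theorem \ref{manybundles}, for which we need to verify the two hypotheses on some ray $\rho \in \Delta(1)$: that $\rho$ is in Egyptian position, and that the corresponding toric prime divisor $D_\rho$ is projective.

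First I would observe that, in dimension $n = 3$, \emph{every} ray of the fan $\Delta$ is automatically in Egyptian position. Indeed, pick $\rho \in \Delta(1)$ and let $\sigma \in \Star(\rho)$ be any $3$-dimensional cone. If $\sigma$ is simplicial, then by Example \ref{simplex} it is a pyramidal extension of $\sigma' = \sum_{\rho' \neq \rho} \rho'$ by $\rho$. If $\sigma$ is non-simplicial, then Example \ref{threedimensionalcones} shows that it is again a pyramidal extension of $\sigma'$ by $\rho$, using that $\rho$ is the intersection of its two adjacent facets. Thus the combinatorial condition for being in Egyptian position is satisfied regardless of which ray $\rho$ we pick.

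Next I need a ray $\rho \in \Delta(1)$ whose toric divisor $D_\rho$ is projective. Since $Y$ is proper and $D_\rho \subset Y$ is closed, $D_\rho$ is a proper $2$-dimensional toric variety. Now I would invoke the classical fact that every proper toric surface is projective (which can be proven, e.g.\ by constructing an ample torus-invariant Cartier divisor from a strictly upper convex support function on the fan of $D_\rho$, itself obtained as the quotient fan $\bar{\Delta}_\rho$ in $N_\RR / \langle \rho \rangle_\RR$). Hence $D_\rho$ is projective for every choice of $\rho \in \Delta(1)$.

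With both hypotheses of Theorem \ref{manybundles} satisfied for \emph{any} ray $\rho$ of $\Delta$, the conclusion follows immediately: there exist locally free sheaves $\shE$ of rank $n = 3$ on $Y$ with Chern number $c_3(\shE)$ arbitrarily large. There is no real obstacle here; the content of the corollary has already been absorbed into Theorem \ref{manybundles} and into the combinatorial analysis of $3$-dimensional cones carried out in Example \ref{threedimensionalcones}. The only ingredient imported from outside the excerpt is the projectivity of proper toric surfaces, which is standard.
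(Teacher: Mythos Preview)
Your proof is correct and follows essentially the same approach as the paper: both verify that every ray in a $3$-dimensional fan is in Egyptian position via Example \ref{threedimensionalcones}, observe that each toric prime divisor is a proper toric surface and hence projective, and then apply Theorem \ref{manybundles}. The paper's proof is just a more compressed version of what you wrote.
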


\begin{proof}
We saw in Example \ref{threedimensionalcones} that in a $3$-dimensional fan every ray is in Egyptian position.
Moreover, every toric prime divisor is a toric surface and therefore projective, so the  Theorem   applies.
\end{proof}

\section{Examples with trivial Picard group}
\mylabel{Examples trivial}

In this section we will construct in any dimension $n \geq 3$ an explicit family  of toric varieties with
trivial Picard group which admit a ray in Egyptian position. Let $e_1, \dots, e_n$ be the standard basis of
$N = \ZZ^n$ and $u > 0$ an integer. Consider the following $2n + 2$ primitive vectors:
\begin{align*}
e & = e_n, \quad f_i = e_i \text{ for } 1 \leq i < n, \quad f_n = -\sum_{i = 1}^{n - 1} e_i,\\
h & = -e,  \quad g_i = h - f_i \text{ for } 1 \leq i < n, \quad g_n = uh - f_n.
\end{align*}
With these vectors, we define the following $\binom{n + 1}{2}$  cones of dimension $n$:
\begin{align*}
\sigma_i & = \langle e, g_i, f_k \mid  k \neq i \rangle_{\RR_{\geq 0}} \text{ for every } 1 \leq i \leq n,\\
\sigma_{ij} & = \langle h, g_i, g_j, f_k \mid k \neq i, j \rangle_{\RR_{\geq 0}} \text{ for every pair } 1 \leq i \neq j \leq n.
\end{align*}

Let us now show that these cones generate a fan. We start by analyzing their face structure.
Every cone has precisely $n + 1$ generators and it is easy to see that the generators of a cone form a \emph{circuit}, i.e.\
a minimally linearly dependent set of lattice vectors. In particular, the generators of the $\sigma_i$ satisfy the
following relations:
$$
e + g_i = \sum_{j \neq i} f_j \text{ for } 1 \leq i < n, \quad \text{ and }\quad ue + g_n = \sum_{j = 1}^{n - 1} f_j,
$$
and for the generators of the $\sigma_{ij}$ we get:
$$
g_i + g_j = 
\begin{cases}
2h + \sum_{k \neq i, j} f_k & \text{ if } i,j\neq n; \\
(u+1)h + \sum_{k \neq i, j} f_k & \text{ else}.
\end{cases}
$$
The face structures of the cones $\sigma_i, \sigma_{ij}$ can easily be read-off from these relations. In particular,
every facet is simplicial (for details we refer to \cite{GKZ}, \S 7). The $2n - 2$ facets of $\sigma_i$ are:
\begin{align*}
\langle e, f_j \mid j \neq i, k \rangle_{\RR_{\geq 0}} \quad \text{ and } \quad
\langle g_i, f_j \mid j \neq i, k \rangle_{\RR_{\geq 0}} \text{ for every } 1 \leq k \neq i \leq n,
\end{align*}
and the $2n - 2$ facets of $\sigma_{ij}$ are:
\begin{align*}
&\langle h, g_i, f_l \mid j \neq i, j, k \rangle_{\RR_{\geq 0}} \quad
\langle h, g_j, f_l \mid j \neq i, j, k \rangle_{\RR_{\geq 0}} \text{ for } 1 \leq k \neq i, j \leq n\\
\text{ and } & \langle g_i, f_k \mid k \neq i, j \rangle_{\RR_{\geq 0}}, \quad
\langle g_j, f_k \mid k \neq i, j \rangle_{\RR_{\geq 0}}.
\end{align*}
We have the following intersections of codimension one among the $\sigma_i, \sigma_{ij}$:
\begin{align*}
\sigma_i \cap \sigma_j & = \langle e, f_k \text{ with } k \neq i, j \rangle_{\RR_{\geq 0}} \text{ for } i \neq j,\\
\sigma_{ik} \cap \sigma_{jk} & = \langle h, g_k, f_l, l \neq i, j, k \rangle_{\RR_{\geq 0}}  \text{ for } i \neq j,\\
\sigma_i \cap \sigma_{ij} & = \langle g_i, f_k, k \neq i, j \rangle_{\RR_{\geq 0}}  \text{ for } i \neq j.
\end{align*}
The remaining intersections are all of codimension three:
\begin{align*}
\sigma_{ij} \cap \sigma_{pq} & = \langle h, f_k, k \neq i, j, p, q \rangle_{\RR_{\geq 0}} \text{ if } \{i, j\} \cap \{p, q\}
= \emptyset,\\
\sigma_i \cap \sigma_{jk} & = \langle f_l, l \neq i, j, k \rangle_{\RR_{\geq 0}} \text{ if } i \neq j, k.
\end{align*}
So we see that any two cones intersect in a proper face and every facet is the intersection of two maximal
cones. It follows that the cones $\sigma_i$, $\sigma_{ij}$ generate a complete fan $\Delta_u$. We denote $Y_u$
the corresponding proper toric variety.

\begin{proposition}\label{trivialpic}
The toric variety $Y_1$ is projective,  whereas $\Pic(Y_u) = 0$ for $u > 1$.
\end{proposition}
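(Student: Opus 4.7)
The plan is to compute $\Pic(Y_u)$ directly from the fan data by analysing when a toric Weil divisor $D = \sum_\rho a_\rho D_\rho$ is Cartier: this requires, for each maximal cone $\sigma$, an element $m_\sigma \in M$ satisfying $\langle m_\sigma, l_\rho\rangle = -a_\rho$ for every $\rho \in \sigma(1)$, with $D$ principal iff the $m_\sigma$ come from a single global element of $M$. Since $\{e, f_1,\dots,f_{n-1}\}$ is a $\ZZ$-basis of $N$, we may translate $D$ by a character to normalize $a_e = 0$ and $a_{f_i} = 0$ for $1 \le i \le n-1$, leaving free parameters $\alpha = a_h$, $\beta = a_{f_n}$, and $\gamma_i = a_{g_i}$.

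The cone $\sigma_n = \langle e, g_n, f_1,\dots, f_{n-1}\rangle$ already contains a basis of $N$ among its rays, forcing $m_{\sigma_n} = 0$ and hence $\gamma_n = 0$. For $i < n$, the cone $\sigma_i = \langle e, g_i, f_k : k \neq i\rangle$ contains $f_n$, and the vanishing conditions on $e, f_1,\dots,f_{n-1}$ (minus $f_i$) together with $m_{\sigma_i}(f_n) = -\beta$ give $m_{\sigma_i} = \beta\, e_i^*$ and, via $g_i = -e_n - e_i$, the equality $\gamma_i = \beta$. Turning to $\sigma_{ij}$ with $i,j < n$ distinct, one reads off $m_{\sigma_{ij}}(e_n) = \alpha$, $m_{\sigma_{ij}}(e_k) = 0$ for $k \neq i,j,n$, and $m_{\sigma_{ij}}(e_i) = m_{\sigma_{ij}}(e_j) = \beta - \alpha$; imposing the $f_n$-constraint yields $2(\beta-\alpha) = \beta$, so $\beta = 2\alpha$. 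Finally, for $\sigma_{in}$ with $i < n$, the same procedure gives $m_{\sigma_{in}} = \alpha(e_i^* + e_n^*)$, and evaluating on $g_n = uh - f_n$ produces $m_{\sigma_{in}}(g_n) = (1-u)\alpha$; this must equal $-\gamma_n = 0$.

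For $u > 1$, the relation $(1-u)\alpha = 0$ forces $\alpha = 0$, whence $\beta = 0$ and all $\gamma_i = 0$. Thus every Cartier divisor is principal, and $\Pic(Y_u) = 0$.

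For $u = 1$, the relation is automatic, so $\Pic(Y_1) \simeq \ZZ$ is generated by
\[
 D_0 = D_h + 2 D_{f_n} + 2\sum_{i=1}^{n-1} D_{g_i}.
\]
To conclude projectivity, I would verify that the associated support function $\psi_{D_0}$, determined above by $m_{\sigma_n} = 0$, $m_{\sigma_i} = 2 e_i^*$, $m_{\sigma_{ij}} = e_i^* + e_j^* + e_n^*$, $m_{\sigma_{in}} = e_i^* + e_n^*$, is strictly convex, equivalently that the rational polytope
\[
 P_{D_0} = \bigl\{ x \in M_\RR \,\bigm|\, 0 \le x_n \le 1,\ x_i \ge 0,\ \textstyle\sum_{i<n} x_i \le 2,\ x_n + x_i \le 2 \text{ for } i<n,\ \sum_{i<n} x_i \ge x_n \bigr\}
\]
has the $m_\sigma$ as its vertices and normal fan equal to $\Delta_1$. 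The main obstacle is then purely bookkeeping: check across each of the wall types $\sigma_i \cap \sigma_j$, $\sigma_i \cap \sigma_n$, $\sigma_i \cap \sigma_{ij}$, $\sigma_i \cap \sigma_{in}$, $\sigma_n \cap \sigma_{in}$, $\sigma_{ij} \cap \sigma_{ik}$, $\sigma_{in} \cap \sigma_{jn}$, $\sigma_{ij} \cap \sigma_{in}$ that $m_\sigma \neq m_{\sigma'}$ with the correct sign on a ray of $\sigma' \setminus \sigma$; this is a finite, routine case analysis using the explicit coordinates above, and yields ampleness of $D_0$, hence projectivity of $Y_1$.
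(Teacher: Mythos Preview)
Your argument is correct and follows essentially the same route as the paper: normalize via the cone $\sigma_n$ (equivalently, via the basis $e,f_1,\dots,f_{n-1}$), propagate the Cartier constraints through the cones $\sigma_i$, $\sigma_{ij}$, $\sigma_{in}$ to obtain $\gamma_i=\beta=2\alpha$ and finally $(1-u)\alpha=0$, and for $u=1$ leave strict convexity of the support function as a direct check. Your presentation is in fact somewhat more explicit than the paper's, which also relegates the projectivity verification to the reader.
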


\proof
In general, on an $n$-dimensional proper toric variety $Y$ with fan $\Delta$, a toric Weil divisor
$D = \sum_{\rho \in \Delta(1)} c_\rho D_\rho$ is Cartier if and only if there exist a collection of characters
$(m_\sigma)_{\sigma \in \Delta(n)}$ such that $c_\rho = -m_\sigma(l_\rho)$ for every $\sigma \in \Delta$ with
$\rho \in \sigma(1)$. Here we identify $M$ with the dual of $N$ and  write $m_\sigma(l_\rho)$ for the
evaluation  of $m_\sigma$ at the primitive vector $l_\rho$ generating the ray $\rho$.
Note that if $(m_\sigma)_{\sigma \in \Delta(n)}$ corresponds to a toric Cartier divisor,
then so does $(m_\sigma + m)_{\sigma \in \Delta(n)}$ for any $m \in M$; this corresponds to a change of
linearization of the line bundle $\mathcal{O}_X(D)$ by a global twist with $m$.

In our situation, we denote the toric prime divisors by $D_e, D_{f_i}, D_{g_i}, D_h$ and consider a family of characters
$m_i, m_{ij}$ corresponding to the cones $\sigma_i, \sigma_{ij}$. 
For $u>1$, the  task is to  show that if such a collection of characters
corresponds to a Cartier divisor then there is an $m \in M$ with  $m_i = m_{ij} = m$ for all $i, j$.

We can assume  $m_n = 0$ without loss of generality, such that the corresponding toric Cartier
divisor is of the form $-c D_{f_n} - \sum_{i = 1}^{n - 1} c_i D_{g_i} - c_h D_h$. Then it follows that $m_i(f_n) = c$ for
every $1 \leq i < n$. Moreover, for $1 \leq j \neq i < n$ we have $m_i(f_j) = 0$. Altogether we have a complete set of
linearly independent conditions which determine $m_i$ and we obtain $c_i = m_i(g_i) = c$ for every $1 \leq i < n$.

Next we consider any cone $\sigma_{ij}$, with $1 \leq i \neq j < n$. Then we have $m_{ij}(g_i) = m_{ij}(g_j) =
m_{ij}(f_n)  = c$, and $m_{ij}(f_k) = 0$ for every $k \neq i, j, n$. It follows that $m_{ij}(g_i) = m_{ij}(h - f_i)
= m_{ij}(g_j) = m_{ij}(h - f_j) = c$, hence $m_{ij}(f_i) = m_{ij}(f_j) = m_{ij}(h) - c$ and therefore $m_{ij}(f_n) =
-2m_{ij}(f_i) = c$, so we get $c_h = m_{ij}(h) = c / 2$. At this point, we have shown that $\Pic(Y_u)$ is exhausted
by the parameter $c$ and therefore has rank at most one.

Now we consider $\sigma_{in}$ for any $1 \leq i < n$. Again, we have $m_{in}(g_i) = c$, but $m_{in}(g_n) = m_{in}(f_k)
= 0$ for $1 \leq k \neq i < n$. With $m_{in}(g_i) = m_{in}(h) - m_{in}(f_i) = c/2 - m_{in}(f_i) = c$ it follows
$m_{in}(f_i) = -c/2$. Then $m_{in}(g_n) = uc/2 - m_{in}(f_n) = uc/2 + m_{in}(f_i) = (u-1)c/2$.
By our original assumption, we had $m_n(g_n) = m_{in}(g_n) = 0$, so for $u > 1$ we necessarily have $c = 0$
and hence a toric Cartier divisor is rationally equivalent to zero.

For the case $u = 1$, it is straightforward to check that for $c > 0$ the corresponding characters $m_i, m_{ij}$
constitute a strictly convex piecewise linear function on $\Delta_1$; we leave this as an exercise for the reader.
\qed

\begin{proposition}\label{Egyptianposition}
The ray generated by $e$ is in Egyptian position and the corresponding divisor $D_e$ on $Y_u$
is projective.
\end{proposition}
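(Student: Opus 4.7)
The plan is to verify the two claims separately: that the ray $\rho = \RR_{\geq 0} e$ satisfies the pyramidal-extension condition at every maximal cone of $\Star(\rho)$, and that the divisor $D_e$ is isomorphic to $\PP^{n-1}$.

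First, observe that among the $n$-dimensional cones of $\Delta_u$, only $\sigma_1, \ldots, \sigma_n$ contain $e$, since the cones $\sigma_{ij}$ are built from $h = -e$ rather than $e$. So I only need to check the pyramidal condition for these $n$ cones. For each, the relevant subcone is $\sigma_i' = \langle g_i, f_k \mid k \neq i \rangle_{\RR_{\geq 0}}$. The first step is to show $\dim \sigma_i' = n$. Using $g_i = -e - f_i$ for $i<n$ (respectively $g_n = -ue - f_n$), and the fact that $f_1, \ldots, f_n$ span $N_\RR$, one immediately sees that the $n$ generators of $\sigma_i'$ also span $N_\RR$. So condition (ii) of the definition of pyramidal extension applies, and $\sigma_i'$ is simplicial.

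The heart of the argument is then the circuit relation, already recorded in the excerpt: for $i<n$ one has $e = \sum_{j\neq i}f_j - g_i$, while for $i=n$ one has $ue = \sum_{j<n}f_j - g_n$. Since $\sigma_i'$ is simplicial, each facet is obtained by dropping one generator, and the facet $\eta_v$ opposite a generator $v$ has the property that the relative interior of $\rho$ lies beyond $\eta_v$ iff the coefficient of $v$ in the expression for $e$ is strictly negative, and beneath $\eta_v$ iff that coefficient is strictly positive. In both cases the coefficient of $g_i$ (resp.\ $g_n$) is negative and all others are positive, so the pyramidal-extension condition holds with the unique ``beyond'' facet being $\eta = \langle f_k \mid k \neq i\rangle_{\RR_{\geq 0}}$. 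This proves $\rho$ is in Egyptian position.

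For the projectivity of $D_e$, the plan is to identify the quotient fan $\bar{\Delta}_\rho$ describing $D_e = V(\rho)$ explicitly. In the quotient lattice $N/\ZZ e \simeq \ZZ^{n-1}$, the images of the $f_k$'s are $\bar f_k = e_k$ for $k<n$ and $\bar f_n = -\sum_{j<n} e_j$, while the circuit relations show $\bar g_i \in \mathrm{int}(\bar\sigma_i)$, so no $\bar g_i$ is a ray of the quotient fan. Hence the rays of $\bar\Delta_\rho$ are exactly $\bar f_1, \ldots, \bar f_n$, with maximal cones $\bar\sigma_i = \langle \bar f_k \mid k\neq i\rangle_{\RR_{\geq 0}}$; these satisfy the single relation $\sum_k \bar f_k = 0$. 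This is precisely the standard fan of $\PP^{n-1}$, so $D_e \simeq \PP^{n-1}$ is projective.

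The only subtlety is to give a clean justification of the sign criterion that governs ``beyond vs.\ beneath'' for the simplicial cone $\sigma_i'$, but this is a short computation with the supporting hyperplane of the facet opposite a generator; no serious obstacle is expected.
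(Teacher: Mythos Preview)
Your proposal is correct and follows essentially the same route as the paper: identify $\Star(\RR_{\geq 0}e)=\{\sigma_1,\dots,\sigma_n\}$, verify the pyramidal-extension condition for each $\sigma_i$, and recognize the quotient fan as that of $\PP^{n-1}$. The paper's own proof is considerably terser---it simply asserts that ``by construction, $\sigma_i$ is a pyramidal extension of $\sigma_i'$ by $e$'' without spelling out whether condition (i) or (ii) of the definition applies---whereas you correctly compute $\dim\sigma_i'=n$ and use the circuit relation together with the sign criterion for simplicial cones to locate the unique ``beyond'' facet; this is exactly the verification the paper suppresses.
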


\proof
For any $1 \leq i \leq n$, the cone $\sigma_i'$ is generated by $f_j, j \neq k$ and $g_i$ and hence, by construction,
$\sigma_i$ is a pyramidal extension of $\sigma_i'$ by $e$. So, $D_e$ is Egyptian.
The star $\Star(\RR_{\geq 0} e)$ consists the maximal cones $\sigma_1, \dots, \sigma_n$ and, again by construction,
the fan in $N_\RR / \RR e$ generated by images of the $\sigma_i$ under the projection map is the fan associated to
$\mathbb{P}^{n - 1}$ and therefore we have $D_e \simeq \mathbb{P}^{n - 1}$.
\qed

\medskip
Now, by putting together Propositions \ref{trivialpic} and \ref{Egyptianposition} and Theorem \ref{manybundles},
we obtain:

\begin{theorem}
For all $n\geq 3$ and $u > 1$, the toric variety $Y_u$ has no nontrivial invertible sheaves but 
admits locally free sheaves $\shE$ of rank $n=\dim(Y_u)$
with arbitrarily large top Chern class $c_n(\shE)$.
\end{theorem}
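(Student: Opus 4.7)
The plan is to simply assemble the three preceding results in the section. First, I would invoke Proposition \ref{trivialpic} to handle the statement about invertible sheaves: for $u > 1$ this directly gives $\Pic(Y_u) = 0$, so every invertible sheaf on $Y_u$ is trivial. This is the content of the first half of the theorem and requires no further argument.

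Second, to produce the locally free sheaves of rank $n$ with large top Chern class, I would apply Theorem \ref{manybundles} to $Y = Y_u$. To do so, I need a ray of $\Delta_u$ in Egyptian position whose associated toric prime divisor is projective. Proposition \ref{Egyptianposition} furnishes exactly this: the ray $\RR_{\geq 0} e$ is in Egyptian position, and $D_e \simeq \PP^{n-1}$, which is in particular projective. Plugging $\rho = \RR_{\geq 0} e$ and $D_\rho = D_e$ into Theorem \ref{manybundles} then yields locally free sheaves $\shE$ on $Y_u$ of rank $n = \dim(Y_u)$ with $c_n(\shE)$ arbitrarily large.

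Finally, I would remark that the two parts of the theorem are logically independent: the vanishing of $\Pic(Y_u)$ implies in particular that the locally free sheaves constructed in the second step cannot be written as direct sums of invertible sheaves, so they are genuinely interesting. No obstacle should arise in the proof itself, since the work has already been done upstream; the only thing to verify is that the hypotheses of Theorem \ref{manybundles} are met, and this is exactly what Proposition \ref{Egyptianposition} asserts. The real substance of the theorem lies in Propositions \ref{trivialpic} and \ref{Egyptianposition}, whose proofs have already been carried out, and in the general machinery of Theorem \ref{manybundles} resting on Theorem \ref{main result}.
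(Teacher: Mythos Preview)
Your proposal is correct and matches the paper's approach exactly: the paper simply states that the theorem follows by putting together Propositions \ref{trivialpic} and \ref{Egyptianposition} with Theorem \ref{manybundles}, which is precisely what you do. Your additional remark about the two parts being logically independent is fine commentary but not needed for the proof itself.
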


\section{Projective divisors on threefolds}
\mylabel{Divisors on threefolds}

Let $k$ be a ground field.
Theorem \ref{infinitely classes} triggers the following question: Under what conditions
does a proper  scheme $X$ contain a divisor $D\subset X$ so that the proper scheme $D$ is projective?
As far as we see, the existence of such a projective divisor is open   for smooth proper 
threefolds that are non-projective. In this direction, we have   a partial result:

\begin{proposition}
Let $X$ be an integral, normal, proper threefold that is $\QQ$-factorial,
and $S\subset X$ be an irreducible closed subscheme of dimension $\dim(S)=2$.
Suppose there is a quasiprojective open subset $U\subset X$ containing all points
$x\in S$ of codimension $\dim(\O_{S,x})=1$. Then the proper scheme $S$ is projective.
\end{proposition}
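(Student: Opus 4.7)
The strategy is to construct a candidate ample Cartier divisor $L$ on $S$ by extending an ample divisor from $V := S \cap U$ using the $\QQ$-factoriality of $X$, and then to verify ampleness through the Nakai--Moishezon criterion for proper surfaces.

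First, I would reduce to the case that $S$ is normal. By \cite{EGA IIIa}, Proposition 2.6.2 (which is the descent criterion also used elsewhere in this paper), ampleness descends along finite surjective morphisms, so it suffices to prove ampleness after pulling back to the normalization $\tilde S \to S$. The hypothesis transfers cleanly: the preimage $\tilde V \subset \tilde S$ of $V$ is open, its complement is the preimage of the finite set $S \setminus V$ and hence finite, and $\tilde V$ inherits quasi-projectivity from $V$ via the finite morphism. The normal surface $\tilde S$ is moreover Cohen--Macaulay.

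Next, I would construct $L$. Pick a very ample Cartier divisor $H_U$ on $U$; after replacing it by a generic linearly equivalent effective divisor, we may assume $\Supp(H_U)$ does not contain $V$. The Zariski closure $\bar H_U \subset X$ is a Weil divisor on the normal $\QQ$-factorial threefold $X$, so some positive multiple $H := c\bar H_U$ is a Cartier divisor on $X$. Since $S \not\subset \Supp(H)$, the restriction $L := H|_S$ is a well-defined effective Cartier divisor on $S$ with $L|_V = cH_U|_V$ ample on $V$. Its pullback $\tilde L$ to $\tilde S$ again restricts to an ample divisor on $\tilde V$ (pullback of ample under the finite $\tilde V \to V$).

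Finally, I would apply the Nakai--Moishezon criterion to verify $\tilde L^2 > 0$ and $\tilde L \cdot \tilde C > 0$ for every integral curve $\tilde C \subset \tilde S$. The key point is that $\tilde Z := \tilde S \setminus \tilde V$ is zero-dimensional, and Cohen--Macaulayness gives the vanishing $H^1_{\tilde Z}(\tilde S, \tilde L^{\otimes n}) = 0$, so every section of $\tilde L^{\otimes n}$ on $\tilde V$ extends uniquely to $\tilde S$. For $\tilde L^2$: pick two sections of a sufficiently high power of $\tilde L|_{\tilde V}$ whose zero divisors meet properly on $\tilde V$; their extensions still meet properly on $\tilde S$ (any extra intersection lies in the zero-dimensional $\tilde Z$), and the intersection number equals the positive ample intersection on $\tilde V$ plus a nonnegative contribution from $\tilde Z$. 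For $\tilde L \cdot \tilde C$: the open $\tilde C \cap \tilde V$ is dense in $\tilde C$, and under the projective embedding of $\tilde V$ given by $\tilde L|_{\tilde V}$ its image is positive-dimensional, so a generic hyperplane section yields an effective divisor on $\tilde V$ meeting $\tilde C \cap \tilde V$ in a nonempty zero-dimensional subscheme without containing it; extending the corresponding section to $\tilde S$ gives $\bar D \sim \tilde L$ with $\bar D \cdot \tilde C > 0$.

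The main obstacle is the bookkeeping around the finite locus $\tilde Z$. Everything hinges on the dimension dichotomy that $\tilde Z$ has codimension two in $\tilde S$, so that top intersection numbers and divisor classes are controlled by the ample data on $\tilde V$, while Cohen--Macaulayness bridges sections from $\tilde V$ to all of $\tilde S$. Once these technical ingredients are in place, Nakai--Moishezon yields ampleness of $\tilde L$ on $\tilde S$, and hence of $L$ on $S$ by the cited descent.
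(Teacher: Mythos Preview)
Your argument is essentially correct and takes a genuinely different route from the paper. The paper proceeds via Chow's Lemma: it produces a projective modification $\tilde X\to X$ that is an isomorphism over $U$, chooses an ample divisor $\tilde H\subset\tilde X$ so that $\tilde H\cap\tilde S$ is irreducible, pushes it down to $H\subset X$, and then argues, using $\QQ$-factoriality, that $S\cap H$ is an irreducible Cartier divisor on $S$ with affine complement; Goodman's Theorem then furnishes the ample class. Your approach bypasses both Chow's Lemma and Goodman's Theorem: you extend a very ample divisor on $U$ directly to a Cartier divisor on $X$ via $\QQ$-factoriality, restrict to $S$, pass to the normalization to get Cohen--Macaulayness and hence extension of sections across the finite locus, and finish with Nakai--Moishezon. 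This is arguably more elementary, though it relies on Nakai--Moishezon in the proper (rather than a priori projective) setting, whereas the paper's route trades that for Goodman's criterion (which, as the paper notes, must be adapted slightly from the factorial to the $\QQ$-factorial hypothesis).

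Two small points of bookkeeping you should tighten. First, your reduction ``to the case that $S$ is normal'' is not literally a reduction, since $\tilde S$ is not a subscheme of $X$ and you still need $\QQ$-factoriality of $X$ to construct $L$; the correct logic (which you in fact carry out) is to construct $L$ on $S$ first, then pull back to $\tilde S$ and descend ampleness via \cite{EGA IIIa}, Proposition 2.6.2. Second, when you speak of ``the projective embedding of $\tilde V$ given by $\tilde L|_{\tilde V}$'' and of $\bar D\sim\tilde L$, you should pass to a sufficiently high power $\tilde L^{\otimes n}$ to get very ampleness; this is harmless for the Nakai--Moishezon inequalities. Neither point affects the validity of the argument.
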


\proof
First note that we may assume that the structure sheaf $\O_S$ contains no nontrivial torsion sections.
This follows inductively from the following observation: If $\shJ\subset\O_S$ is a quasicoherent ideal
sheaf with  $\dim(\shJ)\leq 1$, defining a closed subscheme $S'\subset S$,
 we have a short exact sequence of abelian sheaves
$$
1\lra 1+\shJ\lra\O_{S}^\times\lra\O_{S'}^\times\lra 1,
$$
In the long exact sequence
$$
\Pic(S)\lra\Pic(S')\lra H^2(S,1+\shJ),
$$
the term on the right vanishes because the sheaf $1+\shJ$ is supported on a closed subset of dimension $<2$. 
It follows from \cite{EGA II}, Proposition 4.5.13 that $S$ is projective provided that $S'$ is projective.

According to Chow's Lemma (in the refined form of \cite{Deligne 2010}, Corollary 1.4), there is a proper morphism $f:\tilde{X}\ra X$
with $\tilde{X}$ projective and $f^{-1}(U)\ra U$ an isomorphism. Clearly, we may also assume that $\tilde{X}$
is integral and normal. Let $R\subset \tilde{X}$ be the exceptional locus, which we regard as a reduced closed subscheme.
After replacing $\tilde{X}$ by a blowing-up with center $R$, we may assume that $R\subset\tilde{X}$ is 
a Cartier divisor.
Let $\tilde{S}\subset \tilde{X}$ be the strict transform of the surface $S$,
that is, the schematic closure of $f^{-1}(U\cap S)\subset\tilde{X}$. 

Choose an ample sheaf $\shL\in\Pic(\tilde{X})$. Replacing $\shL$ by a suitable multiple, we may
assume that $\shN=\shL(-R)$ is ample as well. 
Let $R=R_1\cup\ldots\cup R_t$ be the irreducible components. Since $\tilde{S}$ intersects $f^{-1}(U)$, the scheme $\tilde{S}$
contains none of the $R_i$, hence there are closed points $r_i\in R_i\smallsetminus\tilde{S}$.
Let $\shI\subset\O_{\tilde{X}}$ be the quasicoherent ideal corresponding to the closed subscheme $\tilde{S}\cup A\subset\tilde{X}$, 
where $A=\left\{r_1,\ldots,r_t\right\}$, and the union is disjoint.
Choose some $n_0$ so that for all $n\geq n_0$, $H^1(\tilde{X},\shN^{\otimes n}\otimes\shI)=0$.
Next, choose some $n\geq n_0$ so that there is a  regular section $s'\in H^0(\tilde{S},\shN^{\otimes n}_{\tilde{S}})$
whose zero set $(s'=0)\subset\tilde{S}$ is irreducible.
The short exact sequence
$$
0\lra\shI\otimes\shN^{\otimes n}\lra \shN^{\otimes n}\lra \shN^{\otimes n}|_{\tilde{S}\cup A}\lra 0
$$
yields an exact sequence
$$
H^0(\tilde{X},\shN^{\otimes n})\lra H^0(\tilde{S}\cup A,\shN^{\otimes n}|_{\tilde{S}\cup A})\lra  H^1(\tilde{X},\shN^{\otimes n}\otimes\shI),
$$
where the term on the right vanishes, and the term in the middle is a sum corresponding to the 
disjoint union $\tilde{S}\cup A$.
 Therefore we may extend $s'$ to a section $s$ over $\tilde{X}$ that is nonzero at each generic point of 
$R$. It thus  defines an ample Cartier divisor
$\tilde{H}\subset\tilde{X}$ whose intersection with $\tilde{S}$ is irreducible, and that contains no irreducible component
of the exceptional divisor $R$. We now consider its image $H=f(\tilde{H})$, which is a closed subscheme of $X$.

\medskip
{\bf Claim 1:} Each irreducible component of $H$ is of codimension one in $X$.
Indeed: The irreducible components   $\tilde{H}_i\subset\tilde{H}$ are of codimension one in $\tilde{X}$,
and their generic points lie in $f^{-1}(U)=U$. If follows that their images $H_i=f(\tilde{H}_i)$ have codimension one.

\medskip
{\bf Claim 2:} The scheme $S\cap H$ is irreducible and 1-dimensional.
Clearly, we have a   union
$$
S\cap H = f(\tilde{S}\cap\tilde{H}) \cup \bigcup_{s\in S} f((f^{-1}(s)\smallsetminus \tilde{S})\cap \tilde{H}).
$$
By construction, $\tilde{S}\cap\tilde{H}$ is irreducible, and so is its image $f(\tilde{S}\cap\tilde{H})$.
The sets on the right $(f^{-1}(s)\smallsetminus \tilde{S})\cap \tilde{H}$ can be nonempty only if $s\in S$ 
is a critical point for $f:\tilde{X}\ra X$, that is, in
the image of the exceptional set $R\subset\tilde{X} $, thus contained in $S\smallsetminus U$, which is finite.
The upshot is that $S\cap H$ is a disjoint union of the irreducible closed subset
$f(\tilde{S}\cap\tilde{H})$ and finitely many closed points $x_1,\ldots,x_r\in S$.
Now we use the assumption that $X$ is $\QQ$-factorial: The closed subset $H\subset X$ is the support of some Cartier divisor,
so $S\cap H\subset S$ is the support of some Cartier divisor. In turn,  each irreducible component of $S\cap H$ is purely of codimension one.
It follows that $S\cap H = f(\tilde{S}\cap\tilde{H})$, and this must be 1-dimensional.

\medskip
{\bf Claim 3:} The scheme $S\smallsetminus (S\cap H)$ is affine.
To see this, note that 
$$
H=f(\tilde{H})=f(\tilde{H}\cup R)
$$
This is  because $\tilde{H}$ intersects each curve on $\tilde{X}$, in particular those mapping to points in $X$,
and the latter cover $R$. Since $\shN^{\otimes n}(nR)=\shL^{\otimes n}$ is ample, the effective Cartier divisor
$\tilde{H}\cup  R$ is ample, hence its complement $\tilde{X}\smallsetminus(\tilde{H} \cup R)$ is affine. 
Clearly, $\tilde{H}\cup R$ is saturated with respect to the map $f:\tilde{X}\ra X$, thus $\tilde{H}\cup R=f^{-1}f(\tilde{H}\cup R)=f^{-1}(H)$.
By construction,
$$
X\smallsetminus f(R) \supset X\smallsetminus f(\tilde{H}\cup R) = X\smallsetminus H.
$$
We conclude that $f:\tilde{X}\ra X$ is an isomorphism over $X\smallsetminus H$,
and that
$$
\tilde{X}\smallsetminus f^{-1}(H) \simeq  \tilde{X}\smallsetminus (\tilde{H}\cup R)\simeq X\smallsetminus H
$$
is affine.

\medskip
{\bf Claim 4:} The scheme $S$ is projective.
Since $X$ is $\QQ$-factorial, we may   endow the closed subset $H\subset X$ whose irreducible components are 1-codimensional
with a suitable scheme structure so that it becomes an effective Cartier divisor.
Then $S\cap H$ is an effective Cartier divisor, which is moreover irreducible, and has affine complement.
According to Goodman's Theorem (\cite{Goodman 1969}, Theorem 2  on page 168), there 
is an ample divisor on $S$ supported by $S\cap H$, in particular $S$ is projective. Goodman formulated his result under the
assumption that the local rings $\O_{S,s}$ are factorial for all $s\in S$,
but the proof goes through with only minor modification under 
our assumption of $\QQ$-factoriality. Compare also \cite{Hartshorne 1970}, Chapter II, \S 4, Theorem 4.2
for a nice exposition of Goodman's arguments.
\qed

\medskip
The following observation emphasizes that the existence of large quasiprojective open subsets
is a delicate condition:

\begin{proposition}
Let $X$ be an integral, normal, proper $n$-fold that is $\QQ$-factorial but does not admit  an ample invertible sheaf.
Then there is no quasiprojective open subset $U\subset X$ containing all points $x\in X$ of codimension $\dim(\O_{X,x})=n-1$.
\end{proposition}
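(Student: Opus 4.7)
The plan is to prove the contrapositive: under the other hypotheses on $X$, a quasiprojective open $U \subset X$ whose complement $Z = X \setminus U$ has codimension $\ge 2$ already yields an ample invertible sheaf on $X$. The strategy parallels the proof of the preceding proposition.

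First, by Chow's Lemma in the refined form of \cite{Deligne 2010}, Corollary 1.4, I would fix a proper birational morphism $f \colon \tilde{X} \to X$ with $\tilde{X}$ integral, normal, and projective, and with $f^{-1}(U) \to U$ an isomorphism. After a further blowing-up, I may assume that the reduced exceptional locus $R \subset \tilde{X}$ is a Cartier divisor. Choose an ample invertible sheaf $\shL$ on $\tilde{X}$, and pass to a power high enough that $\shN := \shL \otimes \O_{\tilde{X}}(-R)$ is ample and globally generated. Pick a section $s \in H^0(\tilde{X}, \shN)$ which is nonzero at every generic point of $R$, and let $D := (s = 0)$. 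Then $D$ is an effective Cartier divisor containing no component of $R$, the sum $D + R$ is linearly equivalent to the ample $\shL$, and therefore $\tilde{X} \setminus (D \cup R)$ is affine.

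The main obstacle is to show that the set-theoretic image $f(D) \subset X$ contains all of $Z$. The restriction $s|_R$ is a nonzero section of the ample invertible sheaf $\shN|_R$, so $D \cap R = (s|_R = 0)$ is an effective divisor in an ample class on $R$ and therefore meets every positive-dimensional closed subvariety of $R$. Since every $z \in Z$ lies in the exceptional locus, the fiber $f^{-1}(z) \subset R$ is positive-dimensional, hence meets $D \cap R$. It follows that $f(D) \supseteq Z$, and combined with $f$ being an isomorphism over $U$ this gives $f^{-1}(f(D)) = D \cup R$; consequently
\[
X \setminus f(D) \;\cong\; \tilde{X} \setminus (D \cup R)
\]
is affine.

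Finally, each irreducible component of $D$ maps birationally via $f$ to a codimension-one closed subscheme of $X$, so $f(D)$ is naturally a Weil divisor on $X$. By $\QQ$-factoriality, a positive multiple $mf(D)$ is an effective Cartier divisor whose support equals $f(D)$, so its complement $X \setminus \Supp(mf(D))$ is the affine open constructed above. Goodman's Theorem, invoked in exactly the way used in the preceding proposition, then shows that $mf(D)$ is ample, so $X$ admits an ample invertible sheaf, contradicting the hypothesis.
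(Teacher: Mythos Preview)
Your argument is essentially correct, but one step is misstated: it is not true that every $z\in Z$ has positive-dimensional fiber, since $f$ may well be an isomorphism over some points of $Z$. What is true is that every $z\in f(R)$ has positive-dimensional (and, by $f_*\O_{\tilde X}=\O_X$, connected) fiber, hence $f^{-1}(f(R))=R$; combined with $D$ ample this gives $f(R)\subset f(D)$ and then $f^{-1}(f(D))=D\cup R$ as you want. With this correction the rest goes through, including the appeal to Goodman's theorem in its $\QQ$-factorial form (which does not require the support to be irreducible).

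That said, your route is considerably heavier than the paper's. The key observation you do not exploit is that the hypothesis forces $Z=X\smallsetminus U$ to be \emph{finite}, not merely of codimension $\geq 2$: a closed set containing no point of codimension $n-1$ cannot contain a curve. The paper then argues directly on $X$: take the closure $H\subset X$ of a very ample divisor on $U$, use $\QQ$-factoriality to replace it by a Cartier multiple, and set $\shL=\O_X(H)$. The base locus of $\shL$ lies in the finite set $Z$, so the Zariski--Fujita theorem makes a power of $\shL$ globally generated; the resulting morphism to projective space is injective on $U$ and has finite fibers over $Z$, hence is finite, and $\shL$ is ample by \cite{EGA IIIa}, Proposition 2.6.2. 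No Chow's Lemma, no blow-ups, no Goodman. Your approach, modeled on the preceding (genuinely harder) proposition, works but does not take advantage of this simplification.
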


\proof
Suppose there would be such an open subset $U\subset X$. Choose a very ample divisor $H_U\subset U$,
and let $H\subset X$ be its closure. Since $X$ is $\QQ$-factorial, we may assume that $H\subset X$
is Cartier. Let $\shL=\O_X(H)$ be the corresponding invertible sheaf. Obviously,
its base locus is contained in $A=X\smallsetminus U$, which is finite.
By the Zariski--Fujita Theorem \cite{Fujita 1983}, we may replace  $\shL$ by some tensor power
and assume that $\shL$ is globally generated. Let $f:X\ra\PP^m$ be the morphism
coming from the linear system $H^0(X,\shL)$, with $m+1=h^0(\shL)$.
Clearly, the morphism $f$ is injective on $U$, thus has finite fibers.
Then it follows from \cite{EGA IIIa}, Proposition 2.6.2 that the scheme $X$ is projective, contradiction.
\qed


\end{document}